\documentclass[12pt]{amsart}
\usepackage{amssymb,amsthm,amsmath,amstext}
\usepackage{mathrsfs}  
\usepackage{bm}        
\usepackage{mathtools} 

\usepackage[left=1.28in,top=.8in,right=1.28in,bottom=.8in]{geometry}

\usepackage{graphicx}
\usepackage[all]{xy}

\theoremstyle{plain}
\newtheorem{theorem}{Theorem}[section]
\newtheorem{prop}[theorem]{Proposition}
\newtheorem{lemma}[theorem]{Lemma}
\newtheorem{cor}[theorem]{Corollary}
\newcommand{\Brp}{{}_p\mathrm{Br}}
\newcommand{\Brt}{{}_2\mathrm{Br}}
\newtheorem{theoremintro}{Theorem}

\newtheorem*{conj}{Conjecture} 

\theoremstyle{definition}

\newtheorem{example}[theorem]{Example}

\theoremstyle{remark}

\newcommand{\sheaf}[1]{\mathscr{#1}}

\newcommand{\OO}{\sheaf{O}}

\renewcommand{\AA}{\sheaf{A}}

\newcommand{\PP}{\sheaf{P}}
\newcommand{\XX}{\sheaf{X}}
\newcommand{\DD}{\sheaf{D}}
\newcommand{\QQ}{\sheaf{Q}}


\DeclareMathOperator{\br}{\mathrm{br}}

\usepackage{hyperref}
 
\hypersetup{colorlinks=true,linkcolor=blue,anchorcolor=blue,citecolor=blue,letterpaper=true}

\begin{document}

\title[Period-index and $u$-invariant questions  for function fields of curves]
{Period-index and $u$-invariant questions  for function fields   over complete discretely 
valued fields}

\author[Parimala]{R.\ Parimala }
\address{Department of Mathematics \& Computer Science \\ %
Emory University \\ %
400 Dowman Drive~NE \\ %
Atlanta, GA 30322, USA}
\email{ parimala@mathcs.emory.edu, suresh@mathcs.emory.edu}
 
\author[Suresh]{V.\ Suresh}

\date{}

\begin{abstract} Let $K$ be a complete discretely  valued field with residue field
$\kappa$ and $F$ the function field of a curve over $K$.  Let $p$ be the 
characteristic of $\kappa$ and $\ell$  a prime not equal to  $p$. 
If the Brauer $\ell$-dimensions of all finite extensions of $\kappa$ are bounded 
by $d$  and the Brauer $\ell$-dimensions of all extensions of $\kappa$ of
transcendence degree at most 1 are bounded  by $d+1$, then 
it is known that  the Brauer $\ell$-dimension 
of $F$ is at most $d + 2$ ([S1], [HHK1]). In this paper we give a bound for the Brauer 
$p$-dimension  of $F$ in terms of the $p$-rank of $\kappa$. As an application, 
we show that if $\kappa$ is a perfect field of
characteristic 2, then any quadratic form over $F$ in at least 9 variables is isotropic. 
If $\kappa$ is a finite field, this is a result of Heath-Brown/Leep ([HB], [Le]). 
\end{abstract} 
 
\maketitle

\def\ZZ{${\mathbf Z}$}
\def\ih{${\mathbf H}$}
\def\RR{${\mathbf R}$}
\def\IF{${\mathbf F}$}
\def\QQ{${\mathbf Q}$}
\def\IP{${\mathbf P}$}

Let $K$ be a field. For a central simple algebra $A$ over $K$, the
{\it period} of $A$ is the order of its class in the Brauer group  of
$K$ and the {\it index} of $A$ is the degree of the division algebra
 Brauer equivalent to $A$.  The index of $A$ is denoted by ind$(A)$
and period of $A$ by per$(A)$. Let $K$ be a $p$-adic field and $F $ 
the function field of a curve over $K$. The question whether the index of a 
central simple algebra over $F$ divides the square of its period 
has remained open for a while. For indices which are coprime to $p$,
an affirmative answer to this question is a theorem of Saltman ([S1]).  
To complete the answer, one needs to understand the relationship between 
the period and the index for algebras of period $p$ over $F$. One of the main results
proved in this paper is the following

\begin{theoremintro}
  Let $K$ be a $p$-adic  field
 and  $F$ a function field of a curve over $K$.   Then the 
 index of any central simple algebra over $F$
 divides the square of its period.
\end{theoremintro}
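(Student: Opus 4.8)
The plan is to peel Theorem 1 down to a statement about period‑$p$ classes and, more precisely, to a bound on the Brauer $p$‑dimension of $F$. Writing a central simple algebra $A$ over $F$ according to its primary decomposition, it suffices to bound the index of each primary component; for the components of order prime to $p$ this is exactly Saltman's theorem [S1], so the only new content is the $p$‑primary case. For that I would prove the sharper statement that every $\alpha\in{}_p\Br(F)$ has index dividing $p^2$ and then bootstrap to an arbitrary $p$‑power period by induction on $n$: if $\mathrm{per}(\alpha)=p^n$ then $p^{n-1}\alpha$ has period $p$, hence is split by an extension $L/F$ of degree dividing $p^2$; since $L$ is again the function field of a curve over the algebraic closure of $K$ in $L$, which is a finite, hence again $p$‑adic, extension of $K$, the class $\alpha_L$ has period dividing $p^{n-1}$, so by induction $\mathrm{ind}(\alpha_L)\mid p^{2(n-1)}$, and therefore $\mathrm{ind}(\alpha)\mid[L:F]\cdot\mathrm{ind}(\alpha_L)\mid p^{2n}=\mathrm{per}(\alpha)^2$. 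Combined with the prime‑to‑$p$ case this gives $\mathrm{ind}(A)\mid\mathrm{per}(A)^2$ for all $A$. Thus everything reduces to: a period‑$p$ class over the function field of a curve over a $p$‑adic field has index dividing $p^2$ — i.e.\ the Brauer $p$‑dimension of such a field is at most $2$, which is the case $p\text{-rank}(\kappa)=0$ of the general bound.

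To prove this, first adjoin a primitive $p$‑th root of unity to $F$; this costs an extension of degree prime to $p$, which does not change the $p$‑part of the index, so we may assume $F$ contains the $p$‑th roots of unity and (by Merkurjev--Suslin) that $\alpha$ is a sum of symbols. Next choose a regular proper flat model $\mathcal{X}\to\Spec\mathcal{O}_K$ with function field $F$, and, using resolution of singularities for arithmetic surfaces together with repeated blow‑ups, arrange that the ramification locus of $\alpha$ together with the closed fibre $X_0$ forms a strict normal crossings divisor $D=\bigcup_i C_i$ with each $C_i$ regular. The key structural feature is that the function fields $\kappa(C_i)$ of the components are one‑variable function fields over the finite residue field $\kappa$, i.e.\ global fields, so that class field theory is available for them.

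The heart of the argument is a controlled modification of $\alpha$ by a single symbol that removes all ramification away from the closed fibre. Using Kato's ramification theory for the Brauer group in mixed characteristic — the refined Swan conductor of $\alpha$ along each $C_i$, which lives in differentials of the residue field — to describe the residues of $\alpha$, and a Riemann--Roch / approximation argument on the curves $C_i$ to produce rational functions with prescribed zeros, poles and leading coefficients, I would construct $f,g\in F^\times$ such that $\alpha-(f,g)$ is unramified on $\mathcal{X}$ outside $X_0$; here the hypothesis that $\kappa$ is perfect (so $\Omega^1_\kappa=0$, equivalently $p\text{-rank}(\kappa)=0$) is exactly what makes one symbol enough to absorb the wild part of the ramification. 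A class that is unramified on a regular arithmetic surface except along the closed fibre is then controlled by the Brauer group of $X_0$, a curve over the finite field $\kappa$, and by class field theory over finite fields such a class is split by a cyclic cover of degree dividing $p$ coming from the closed fibre, so its index divides $p$. Together with $\mathrm{ind}((f,g))\mid p$ this yields $\mathrm{ind}(\alpha)\mid p^2$.

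\emph{The main obstacle} is the mixed‑characteristic ramification analysis just sketched. In contrast with the prime‑to‑$p$ situation handled by Saltman, the residues of a period‑$p$ class along the $C_i$ are not merely characters of the residue fields but carry Kato's full refined Swan conductor, and showing that this wild data can be matched and cancelled by a single symbol $(f,g)$ requires a careful study of Kato's filtration on ${}_p\Br$ of the complete discretely valued fields at the generic points of the $C_i$, together with a compatible, patching‑type choice of the function $f$ over the whole divisorial configuration on $\mathcal{X}$. This is also precisely where the general statement degrades: for imperfect $\kappa$ one needs roughly $p\text{-rank}(\kappa)$ extra symbols to absorb the wild part, which is why the Brauer $p$‑dimension bound grows with the $p$‑rank of $\kappa$.
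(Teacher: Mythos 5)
Your reductions are fine and agree with the paper's: primary decomposition plus Saltman's theorem for the prime-to-$p$ components, then an induction on the period (the paper's Lemma 1.1) to reduce everything to the claim that a period-$p$ class over $F$ has index dividing $p^2$. But for that core claim you take a Saltman-style route (modify $\alpha$ by a symbol to control ramification globally, then split what remains by class field theory), whereas the paper never modifies $\alpha$ at all: it uses Harbater--Hartmann--Krashen patching to write $\mathrm{ind}(D)$ as the l.c.m.\ of $\mathrm{ind}(D\otimes F_\zeta)$ over closed points and components of the special fibre, and bounds each local index by $p^2$ using Kato's filtration on $\Brp$ of the relevant complete discretely valued fields (Theorem 2.4 and Proposition 3.5), which produces explicit degree-$p^2$ splitting fields of the form $F_\zeta(\sqrt[p]{u},\sqrt[p]{\pi})$.

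There are two genuine gaps in your route. First, the construction of a single symbol $(f,g)$ with $\alpha-(f,g)$ unramified away from $X_0$ is precisely the hard wild-ramification analysis that you defer; nothing in the sketch indicates how the refined Swan conductors along the various $C_i$, which must be matched compatibly at the crossing points, are absorbed by one symbol, and this is not a routine step one can wave at. Second, and decisively, the final splitting claim is unjustified: a period-$p$ class unramified outside $X_0$ need not have index dividing $p$. The local invariant at the generic point $\eta$ of a component of $X_0$ is governed not by $\Omega^1_\kappa=0$ but by $\Omega^1_{\kappa(\eta)}$, and $\kappa(\eta)$ is a global field of characteristic $p$, hence of $p$-rank $1$; the correct local bound there (the paper's Theorem 2.4 with $n=1$) is $p^2$, attained only after adjoining both $\sqrt[p]{u}$ and $\sqrt[p]{\pi}$. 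You are conflating the $p$-rank of $\kappa$ with the $p$-rank of the residue fields that actually carry the wild data. If one repairs your last step with the correct local bounds, your decomposition $\alpha=\bigl(\alpha-(f,g)\bigr)+(f,g)$ yields only $\mathrm{ind}(\alpha)\mid p^2\cdot p=p^3$, which recovers the weaker bound of [LPS] rather than the theorem. To get $p^2$ along these lines you would need to make $\alpha$ minus \emph{two} symbols unramified everywhere, including along $X_0$ with all of its wild part --- a substantially harder task than the one you describe.
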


 Let $K$ be any field.  For a prime
$p$, we define the {\it Brauer $p$-dimension} of $K$, denoted by
Br$_p$dim$(K)$, to be the smallest integer $d \geq 0$ such that for
every finite extension $L$ of $K$ and for 
every central simple algebra $A$ over $L$ of period a power of $p$,
ind$(A)$ divides per$(A)^d$. The {\it Brauer dimension} of $K$,
denoted by Brdim$(K)$, is defined as the maximum of the Brauer 
$p$-dimension of $K$ as $p$ varies over all primes.  
Suppose  the characteristic of $K$ is $p > 0$.  If $[K : K^p ] = p^n$, then
$n$ is called the {\it $p$-rank} of $K$.  A field  of
characteristic $p > 0$ is perfect if and
only if its $p$-rank is 0. A theorem of Albert asserts that the  Brauer $p$-dimension
of a field $K$ of characteristic $p > 0$ is at most the $p$-rank of $K$ (cf. (1.2)). 

In this paper,  we   discuss more  generally the Brauer $p$-dimension
of   function fields  of  curves over a complete discretely valued field
of characteristic   0 with residue field of 
characteristic $p > 0$. 

We begin by bounding the Brauer dimension of complete discretely valued 
fields. Let  $K$ be a complete discretely valued field and  $\kappa$ its
residue field. Suppose that char$(K) = 0$ and char$(\kappa) = p > 0$. 
Let $\ell$ be a prime. Suppose that Br$_\ell$dim$(\kappa) \leq d$. If
$\ell \neq p$, then it  is well known that   
Br$_\ell$dim$(K) \leq d+1$ (cf. [GS], Corollary 7.1.10).  There  seems to be no good 
connections  between the Brauer $p$-dimension of $K$ and the
Brauer $p$-dimension of $\kappa$.  For any $n \geq 0$, we give an 
example of a complete discretely  valued field $K$ with Br$_p$dim$(K) \geq n$ 
and Br$_p$dim$(\kappa) = 0$.  However  there are bounds for  the
Brauer $p$-dimension of $K$ in terms of the $p$-rank of $\kappa$ and
we prove the following 

\begin{theoremintro}
  Let $K$ be a complete discretely valued field
with residue field $\kappa$. Suppose that char$(\kappa) = p > 0$
and the $p$-rank of $\kappa$ is $n$.  Then  Br$_p$dim$(K) \leq 1$ if $ n = 0$ 
and $\frac{n}{2} \leq $ Br$_p$dim$(K) \leq 2n$ if $n \geq 1$.
\end{theoremintro}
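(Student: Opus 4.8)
\medskip\noindent\emph{Plan of proof.}
I would prove the three inequalities separately, reducing throughout to algebras over $K$ itself. If $L/K$ is finite then $L$ is again complete discretely valued, with residue field $\kappa_L$ a finite extension of $\kappa$; since $[\kappa_L:\kappa_L^p]=[\kappa:\kappa^p]=p^n$, the $p$-rank hypothesis passes to finite extensions, so it suffices to bound $\mathrm{ind}(A)$ for a central simple $K$-algebra $A$ of period $p^k$, and after a prime-to-$p$ base change one may assume $\mu_p\subset K$. If $\mathrm{char}(K)=p$ then $[K:K^p]=p^{n+1}$, so Albert's theorem (1.2) already gives $\Br_p\dim(K)\le n+1\le 2n$ for $n\ge1$; thus the upper bound requires work only in characteristic $0$. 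When $n=0$ the residue field $\kappa$ is perfect, so $\Br(\kappa)\{p\}=0$; moreover $\widehat{K^{\mathrm{nr}}}$ is complete discretely valued with algebraically closed residue field of characteristic $p$, hence a $C_1$-field, so $\Br(\widehat{K^{\mathrm{nr}}})=0$ and $\Br(K)=H^2\bigl(G_\kappa,\widehat{K^{\mathrm{nr}}}^{\times}\bigr)$, where $G_\kappa$ is the absolute Galois group of $\kappa$. The valuation sequence splits $G_\kappa$-equivariantly through a uniformizer $\pi$, and the $p$-primary part of $H^2$ of the units of $\widehat{K^{\mathrm{nr}}}$ vanishes — its graded pieces being $\Br(\kappa)\{p\}=0$ and additive Galois cohomology groups of $\kappa^{\mathrm{sep}}$, which vanish in positive degree — so $\Br(K)\{p\}\cong H^1(\kappa,\Q_p/\Z_p)$, every class a cyclic algebra $(\chi,\pi)$. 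Such an algebra is split by the unramified cyclic extension attached to $\chi$, of degree equal to its period, whence $\mathrm{ind}=\mathrm{per}$ and $\Br_p\dim(K)\le1$.

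For $\Br_p\dim(K)\le 2n$ with $n\ge1$ (and $\mathrm{char}(K)=0$), let $D$ be a $K$-division algebra of period $p^k$. I would bound $\mathrm{ind}(D)$ by two factors of $p^{nk}$. The first accounts for the unramified part of $\Br(K)[p^k]$, which is identified with $\Br(\kappa)[p^k]$: by Albert's bound $\Br_p\dim(\kappa)\le n$, phrased as the fact that $\kappa^{1/p^k}$ splits $\Br(\kappa)[p^k]$, the extension $K_1=K(u_1^{1/p^k},\dots,u_n^{1/p^k})$ — for units $u_i$ lifting a $p$-basis of $\kappa$, of degree dividing $p^{nk}$ and with residue field $\kappa^{1/p^k}$ — splits that part, so the class of $D\tensor_K K_1$ may be assumed wildly ramified. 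The second factor requires showing that a $p^k$-torsion, wildly ramified class over a complete discretely valued field with residue field of $p$-rank $n$ has index dividing $p^{nk}$; applied to $D\tensor_K K_1$ this gives $\mathrm{ind}(D)\mid[K_1:K]\cdot\mathrm{ind}(D\tensor_K K_1)\mid p^{nk}\cdot p^{nk}=\mathrm{per}(D)^{2n}$, hence $\Br_p\dim(K)\le2n$. I expect this second step to be the main obstacle. The graded quotients of Kato's ramification filtration of $\Br\{p\}$ are built from the module of differentials of the residue field, of dimension $n$, together with the residue field itself, so each wild class is, up to lower depth, a bounded sum of symbols; but the filtration can have length of order $pe/(p-1)$ with $e=v(p)$ unbounded, and splitting it level by level — adjoining fresh $p$-power roots at each depth — yields an extension of unbounded degree. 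The content is that a single family of roots, of cardinality controlled by $n$, splits the \emph{entire} wild part at once; proving this requires the refined (Bloch--Kato--Gabber) structure of the graded quotients and a choice of splitting elements uniform across depths, and is the technical heart of the argument.

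For the lower bound $\Br_p\dim(K)\ge n/2$, I would exhibit over $K$ a single division algebra of period $p$ and index $p^{\lceil n/2\rceil}$. Fix a $p$-basis $\bar t_1,\dots,\bar t_n$ of $\kappa$, lift it to units $t_1,\dots,t_n$ of $\mathcal{O}_K$, and form the tensor product of $\lceil n/2\rceil$ symbol algebras built from disjoint pairs among the $t_i$, using the uniformizer $\pi$ for the remaining slot when $n$ is odd — a differential-symbol tensor product $[t_1,t_2)\tensor[t_3,t_4)\tensor\cdots$ in equal characteristic, its cyclic-algebra analogue in mixed characteristic. Its degree is $p^{\lceil n/2\rceil}$, so its index divides $p^{\lceil n/2\rceil}$; conversely, a residue (valuation) computation shows that any finite extension of $K$ splitting it induces in its residue field a nontrivial $p$-th power relation, or Artin--Schreier relation, among $\bar t_1,\dots,\bar t_n$ unless its degree is at least $p^{\lceil n/2\rceil}$, contradicting the $p$-independence of the $\bar t_i$. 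Hence the index equals $p^{\lceil n/2\rceil}\ge p^{n/2}$, and $\Br_p\dim(K)\ge n/2$.
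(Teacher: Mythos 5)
There is a genuine gap, and you have located it yourself: the step you defer as ``the technical heart of the argument'' --- that a single family of roots of bounded cardinality splits the entire wild part of $\Brp(K)$ at once --- is precisely the content of the paper's Proposition 2.2, and without it your upper bound does not close. The paper's resolution of your worry about the filtration having unbounded length $N=\nu(p)p/(p-1)$ is the following observation, which your sketch is missing: for $i\ge 1$ the graded piece $\br(K)_i/\br(K)_{i+1}$ is generated, via Kato's surjection $\rho_i:\Omega^1_\kappa\oplus\kappa\to\br(K)_i/\br(K)_{i+1}$, by symbols $(1+\tilde b\pi^{i},u_j)$ and $(\pi,1+\tilde a\pi^{i})$ whose \emph{second} slots always lie in the fixed set $\{u_1,\dots,u_n,\pi\}$ (because $\Omega^1_\kappa$ is spanned by $da_j/a_j$). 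Symbols with the same second slot combine multiplicatively in the first slot, so the successive approximations at all depths accumulate into a single expression $(\lambda_1,u_1)+\cdots+(\lambda_n,u_n)+(\pi,\lambda)$; no fresh roots are needed at deeper levels, and the finiteness $\br(K)_{i}=0$ for $i>N$ terminates the induction. Your proposed first reduction is also not the right decomposition: the depth-zero piece $\br(K)_0/\br(K)_1\cong k_2(\kappa)\oplus\kappa^*/\kappa^{*p}$ is not the classical unramified Brauer group of $K$ (a symbol in two units reduces to a commutative algebra mod $\mathfrak{m}$, so it is not Azumaya over $R$), and $\Br(K)\{p\}$ admits no evident splitting into ``unramified plus wild''; the paper instead kills $k_2(\kappa)$ by adjoining only $\sqrt[p]{u_1},\dots,\sqrt[p]{u_{n-1}}$ (Lemma 1.4: $n-1$ roots suffice to make all pairs $p$-dependent) and absorbs $\kappa^*/\kappa^{*p}$ into one symbol $(\pi,u)$, then applies Proposition 2.2 over the extension, arriving at the splitting field $K(\sqrt[p^2]{u_1},\dots,\sqrt[p^2]{u_{n-1}},\sqrt[p]{u_n},\sqrt[p]{\pi})$ of degree $p^{2n}$.

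Two smaller points. Your $n=0$ argument via Lang's theorem and $\Br(K)\cong H^1(\kappa,\Q_p/\Z_p)$ for perfect residue field is a valid alternative to the paper's (which just observes $k_2(\kappa)\oplus\kappa^*/\kappa^{*p}=0$, hence $\Brp(K)=\br(K)_1$ and every class is $(\pi,u)$). For the lower bound your construction matches the paper's Lemma 2.6, but the assertion that ``a residue computation'' forces degree at least $p^{\lceil n/2\rceil}$ is exactly the nontrivial part: one must show that the image of $\lambda_1\,da_1\wedge da_2+\cdots+\lambda_m\,da_{2m-1}\wedge da_{2m}$ in $\Omega^2_{\kappa'}$ is nonzero for every extension $\kappa'/\kappa$ of degree $<p^{m}$ (Lemma 1.6), which is a delicate $p$-basis bookkeeping argument after reducing to purely inseparable $\kappa'$ of exponent one; it cannot be waved through, and your strengthened variant using the slot $\pi$ for odd $n$ would need an additional ramification argument on top of it.
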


Let  $F$  be the function field of a curve over $K$.  Let  $\ell$ be a
prime. Suppose that
there exists $d $  such that Br$_\ell$dim$(\kappa) \leq d$
and Br$_\ell$dim$(\kappa(C) ) \leq d+1$ for every curve $C$ over $\kappa$.  
It was proved in [HHK1] that  if char$(\kappa) \neq \ell$, then Br$_\ell$dim$(F) \leq d + 2$.  
  For $\ell  = $char$(\kappa)$,  
we prove the following

\begin{theoremintro}
\label{thm:main1} Let $K$ be a complete discretely valued field
with residue field $\kappa$. Suppose that char$(K) = 0$ and
char$(\kappa) = p > 0$.  Let $F$ be the function field of a curve 
over $K$. If  the $p$-rank of $\kappa$ is $n$,
then Br$_p$dim$(F) \leq 2n+2$. 
\end{theoremintro}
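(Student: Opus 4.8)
The plan is to reduce the statement to an index bound for Brauer classes of period $p$, and then to prove that bound by the patching method of Harbater, Hartmann and Krashen together with the bound on $\Br_p\dim$ of complete discretely valued fields established above. For the reduction, observe that any finite extension $L$ of $F$ is again the function field of a curve over a complete discretely valued field of characteristic $0$ whose residue field is finite over $\kappa$, hence still of $p$-rank $n$ (the $p$-rank is invariant under finite field extensions). So it suffices to prove: for every field $F$ as in the theorem and every $\beta\in\Br(F)$ with $\operatorname{per}(\beta)\mid p$, one has $\operatorname{ind}(\beta)\mid p^{2n+2}$. Granting this, a class $\alpha$ with $\operatorname{per}(\alpha)=p^{k}$ is handled by induction on $k$: the class $p\alpha$ has period $p^{k-1}$, so $\operatorname{ind}(p\alpha)\mid p^{(k-1)(2n+2)}$ by induction; a splitting field $L$ of $p\alpha$ with $[L:F]=\operatorname{ind}(p\alpha)$ is again of our type and satisfies $\operatorname{per}(\alpha_{L})\mid p$, whence $\operatorname{ind}(\alpha_{L})\mid p^{2n+2}$ and therefore $\operatorname{ind}(\alpha)\mid[L:F]\,\operatorname{ind}(\alpha_{L})\mid p^{k(2n+2)}=\operatorname{per}(\alpha)^{2n+2}$.

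Now fix $\beta\in\Br(F)$ of period $p$ and let $R$ be the valuation ring of $K$. Starting from a smooth projective curve over $K$ with function field $F$, a regular proper flat model over $R$, and embedded resolution of singularities for excellent two-dimensional regular schemes, I would produce a regular integral scheme $\mathcal{X}$, proper and flat over $\Spec R$ with function field $F$, on which the union of the ramification locus of $\beta$ and the closed fibre $X$ is a normal crossing divisor. By the index version of the Harbater--Hartmann--Krashen local--global principle ([HHK1]), refined by the standard fact that the index over the field of a closed point is controlled by the indices along the branches through that point, $\operatorname{ind}(\beta)$ divides the least common multiple of the indices $\operatorname{ind}(\beta_{F_{\eta}})$, as $\eta$ runs over the generic points of the irreducible components of $X$, and $\operatorname{ind}(\beta_{F_{\wp}})$, as $\wp$ runs over the branches of $X$ at its closed points, where $F_{\eta}$ and $F_{\wp}$ are the completions of $F$ at the corresponding discrete valuations. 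It therefore suffices to bound each of these local indices by $p^{2n+2}$.

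Each $F_{\eta}$ is a complete discretely valued field of characteristic $0$ whose residue field is $\kappa(Z)$, the function field of the corresponding component $Z$ of $X$; since $Z$ is a curve over $\kappa$, the field $\kappa(Z)$ has $p$-rank $n+1$, so Theorem~2 gives $\Br_p\dim(F_{\eta})\le 2(n+1)=2n+2$ and hence $\operatorname{ind}(\beta_{F_{\eta}})\mid p^{2n+2}$. For a branch $\wp$ at a closed point $P$ of $X$, the field $F_{\wp}$ is a complete discretely valued field of characteristic $0$ whose residue field is itself a complete discretely valued field with residue field $\kappa(P)$, namely the completion at $P$ of the function field of the component of $X$ (or of the horizontal ramification curve) carrying $\wp$. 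Since $\kappa(P)$ is finite over $\kappa$ it has $p$-rank $n$, so the residue field of $F_{\wp}$ has $p$-rank at most $n+1$, and Theorem~2 again gives $\operatorname{ind}(\beta_{F_{\wp}})\mid p^{2n+2}$. Taking the least common multiple yields $\operatorname{ind}(\beta)\mid p^{2n+2}$, which by the first paragraph completes the proof.

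I expect the genuine difficulty to lie in the local analysis at the closed points of $X$: choosing at each such point $P$ a regular system of parameters $u,v$ that brings the (possibly wildly ramified) class $\beta$ and the closed fibre simultaneously into normal crossing form while keeping a uniformiser of $R$ a monomial in $u$ and $v$, and then making rigorous the reduction of $\operatorname{ind}(\beta_{F_{P}})$, with $F_{P}=\Frac(\widehat{\mathcal{O}}_{\mathcal{X},P})$ the fraction field of a two-dimensional complete regular local ring, to the indices on the two branches through $P$. In the tamely ramified case this reduction is classical; here it must be established for $p$-torsion classes in mixed characteristic, and it is precisely at this point that the structure underlying Theorem~2, and the role of the $p$-rank, enters. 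The remaining ingredients, namely the contributions at the generic points of the components and the passage between the various patching fields, are comparatively routine once Theorem~2 is available.
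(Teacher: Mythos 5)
Your global strategy coincides with the paper's: reduce to classes of period $p$, pass to a regular proper model on which the ramification divisor of $\beta$ and the reduced special fibre form a normal crossings configuration, apply the Harbater--Hartmann--Krashen local--global principle for the index, and bound the contribution at the generic points $\eta$ of the components by the complete-discretely-valued-field bound (Theorem~2) applied to $F_\eta$, whose residue field is a function field of a curve over $\kappa$ and hence has $p$-rank $n+1$. Up to that point your argument agrees with the paper's and is correct.

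The gap is at the closed points. The index formula of [HHK1, 5.1] runs over the fields $F_P=\mathrm{Frac}(\hat{\mathcal O}_{\mathcal X,P})$ for $P$ in the chosen finite set of closed points together with the fields $F_U$; it is not stated in terms of the branch fields $F_\wp$. The ``standard fact that the index over the field of a closed point is controlled by the indices along the branches through that point,'' which you invoke to pass from $F_P$ to the $F_\wp$, is not an available statement for $p$-torsion classes in mixed characteristic, and you concede in your final paragraph that making this rigorous is ``the genuine difficulty.'' That deferred step is precisely the technical core of the paper's proof, namely Proposition 3.5: with $A=\hat{\mathcal O}_{\mathcal X,P}$ and maximal ideal $(\pi,\delta)$ supporting the ramification, one exhibits the explicit extension $E=F_P(\sqrt[p^2]{u_1},\dots,\sqrt[p^2]{u_n},\sqrt[p]{\pi},\sqrt[p]{\delta})$ of degree $p^{2n+2}$, where the $u_i$ lift a $p$-basis of $\kappa(P)$; one checks that $\beta\otimes E$ is unramified at every height-one prime of $B=A[\sqrt[p^2]{u_1},\dots,\sqrt[p]{\pi},\sqrt[p]{\delta}]$, the delicate case being the primes over $(\pi)$ and $(\delta)$, where Theorem~2.4 is applied to the completed branch fields (whose residue fields have $p$-rank $n+1$); and one then concludes via purity ([AG], 7.4) and the triviality of Azumaya algebras over a complete regular local ring whose reduction modulo the maximal ideal is split ([C], [KOS]) that $\beta\otimes E=0$, so $\operatorname{ind}(\beta_{F_P})$ divides $p^{2n+2}$. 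Without this (or an equivalent) argument your bound at the closed points is unsupported and the proof is incomplete. A minor further point: one should first adjoin a primitive $p$-th root of unity (harmless, the degree being prime to $p$) so that the symbol calculus of Section~2 applies.
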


We use the description of the Brauer group of a complete
discretely valued field in the mixed characteristic case due to Kato
([K], [CT]) and the patching techniques of Harbater-Hartman-Krashen
([HHK1]) to prove our main results.

In the last section, we derive some consequences for the $u$-invariant
of fields. The $u$-invariant of a field $L$ is the maximum dimension
of anisotropic quadratic forms over $L$. Let $K$ be a complete
discretely valued field with residue field $\kappa$. It is a theorem of
Springer that $u(K) = 2u(\kappa)$. Let $F$ be a function field of a
curve over $K$. Suppose that char$(\kappa) \neq 2$. If  $u(L) \leq d$ for every
finite extension $L$ of $\kappa$ and $u(\kappa(C)) \leq 2d$ for every
function field $\kappa(C)$ of a curve $C$ over $\kappa$, then in
([HHK1]), it is proved that $u(F) \leq 4d$. For a $p$-adic field $K$,
this was proved in ([PS2]).     
Suppose $\kappa$ is a field of characteristic 2
with $[\kappa : \kappa^2] = n$. Then
 $u(\kappa) \leq 2n$ ([MMW], Corollary 1).  Let $L$ be a finite extension
of $\kappa$. Since $[L : L^2] = n$, we have $u(L)
\leq 2n$.  If  $C$ is a curve over $\kappa$, then
$[\kappa(C) : \kappa(C)^2]  = 2n $ and hence $u(\kappa(C))
\leq 4n$.   If  char$(K) = 2$,  $[F : F^{*2}]  = 4n$ and hence $u(F) \leq 8n$
([MMW]. Corollary 1). 
Suppose that char$(K) =0$.
If $\kappa$ is a finite field, then results of Heath-Brown ([HB) and Leep ([Le])
lead to $u(F) = 8$. 
However very little is known about $u(F)$ for general $\kappa$.  We prove  
the following

\begin{theoremintro}
Let $K$ be a complete discretely valued field with residue field $\kappa$.
Suppose that char$(K) = 0$ and char$(\kappa) = 2$. Let $F$ be a function field
of a curve over $K$. If $\kappa$ is a perfect field, then $u(F) \leq 8$.
\end{theoremintro}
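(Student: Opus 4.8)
The plan is to follow, for quadratic forms, the same model‑and‑patching strategy used for Theorem~\ref{thm:main1}; the point is simply that a perfect field of characteristic $2$ has $2$‑rank $0$, so that it and its one‑variable extensions are as small as possible.

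Since $\kappa$ is perfect, $\mathrm{cd}_2(\kappa)\le 1$ and the $2$‑rank of $\kappa$ is $0$; hence Theorem~\ref{thm:main1} gives $\mathrm{Br}_2\mathrm{dim}(F)\le 2$, and, $\kappa$ being perfect, $\mathrm{cd}_2(K)\le 2$ and so $\mathrm{cd}_2(F)\le 3$, whence $I^4F=0$ and $H^n(F,\Z/2)=0$ for $n\ge 4$. Moreover every residue field of a regular model of the curve --- a finite extension of $\kappa$, the function field of a curve over $\kappa$, or the fraction field of a complete discrete valuation ring with such a residue field --- has $2$‑rank at most $1$, so by Albert's theorem (1.2) its Brauer $2$‑dimension is $\le 1$; by [MMW] the function field of a curve over $\kappa$ has $u$‑invariant $\le 4$ and a finite extension of $\kappa$ has $u$‑invariant $\le 2$. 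I would first reduce $u(F)\le 8$ to the statement that every $9$‑dimensional quadratic form $q$ over $F$ is isotropic, a form of larger dimension containing a $9$‑dimensional subform; note also that $\mathrm{char}(F)=0$, so the classical theory of quadratic forms, including the isomorphisms $I^n/I^{n+1}\cong H^n(\,\cdot\,,\Z/2)$, is available over $F$ and over all the complete fields below.

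The core of the argument is a local--global principle. Fix a regular model $\mathcal X$ of the curve over the valuation ring of $K$ with normal crossings closed fibre which, after finitely many blow‑ups, puts the discriminant divisor of $q$ and the ramification locus of the Clifford invariant $c(q)\in\mathrm{Br}_2(F)$ in good position relative to the closed fibre --- the ``splitting of ramification'' step already carried out in the proof of Theorem~\ref{thm:main1}. By the Harbater--Hartmann--Krashen local--global principle for isotropy of quadratic forms of dimension $\ge 3$ ([HHK1]), $q$ is isotropic over $F$ provided it is isotropic over each of the finitely many complete fields $F_\xi$ attached to the points and branches of the closed fibre. Each $F_\xi$ is a complete discretely valued field (or the fraction field of a two‑dimensional complete regular local ring) of mixed characteristic $(0,2)$, whose residue data is a finite extension of $\kappa$ ($u\le 2$), a function field of a curve over $\kappa$ ($u\le 4$), or a complete discretely valued field with such a residue field ($u\le 4$), and in each case of Brauer $2$‑dimension $\le 1$. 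So the whole theorem comes down to proving $u(F_\xi)\le 8$ for these fields.

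The main obstacle is exactly this estimate, because the residue characteristic is $2$: Springer's theorem $u=2\,u(\text{residue field})$, the device that makes the corresponding argument go through in [HHK1], is unavailable, and one cannot merely diagonalise a form over $F_\xi$, separate its unit part from its uniformiser part, and reduce modulo the maximal ideal, since in characteristic $2$ a diagonal form specialises to a totally singular, everywhere non‑smooth, form and Hensel lifting of a residue zero fails. The way around this is to argue cohomologically with the characteristic‑$0$ field $F_\xi$ itself: one has $I^4F_\xi=0$ and $H^{\ge 4}(F_\xi,\Z/2)=0$ from $\mathrm{cd}_2(F_\xi)\le 3$, and $\mathrm{Br}_2\mathrm{dim}(F_\xi)\le 2$; using Kato's mixed‑characteristic description of $\mathrm{Br}_2(F_\xi)$ and of $H^3(F_\xi,\Z/2)$ ([K],[CT]), together with the fact that the relevant residue fields have Brauer $2$‑dimension $\le 1$, one shows that every class in $H^3(F_\xi,\Z/2)$ is a symbol; and then, given a $9$‑dimensional form over $F_\xi$, one subtracts from it (after padding to even dimension and adjusting discriminants into $I^2F_\xi$) an Albert form realising its Clifford invariant --- legitimate because that Brauer class has index dividing $4$ --- to land in $I^3F_\xi$, recognises the result through its Arason invariant as a single $3$‑fold Pfister form, and uses $I^4F_\xi=0$ (which also forces anisotropic $3$‑fold Pfister forms over $F_\xi$ to be universal) to conclude by a dimension count that the form is isotropic over $F_\xi$. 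Feeding these local conclusions back into the local--global principle gives that $q$ is isotropic over $F$, i.e.\ $u(F)\le 8$; when $\kappa$ is finite this recovers [HB],[Le]. The genuinely new point throughout, and where the residue characteristic $2$ really bites, is the control of $\mathrm{Br}_2$ and $H^3$ of these wildly ramified mixed‑characteristic fields via Kato's theory.
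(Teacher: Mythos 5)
Your overall strategy coincides with the paper's: reduce to $9$-dimensional forms, choose a regular model with normal crossings adapted to the coefficients of $q$ (and to $2$), apply the Harbater--Hartmann--Krashen local--global principle for isotropy ([HHK1], 4.2), and at the closed points of the model argue cohomologically --- a normal form for the Clifford invariant, subtraction of a $5$-dimensional piece of an Albert form to land in $I^3$, and identification of the resulting $e_3$-invariant as a single symbol via Kato's residue maps together with the vanishing of unramified $H^3$ over the two-dimensional complete local ring (Lemmas 4.1, 4.2, 4.5 and Propositions 4.4, 4.6 of the paper). For the components of the special fibre the paper gets $u(F_\eta)\le 8$ more directly from [MMW] and a Springer-type bound, but your uniform treatment of all patches is harmless.

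The genuine gap is in your concluding ``dimension count.'' Writing $q\equiv q_1\perp-\rho$ in the Witt group with $\dim q_1=5$ and $\rho$ a $3$-fold Pfister form, the form $q_1\perp-\rho$ has dimension $13$ and you must show its anisotropic dimension is at most $7$ (it is odd, and it equals $9$ exactly when $q$ is anisotropic). Universality of anisotropic $3$-fold Pfister forms, which does follow from $I^4=0$, only allows you to cancel one diagonal entry of $q_1$ against $\rho$: with $q_1=\langle b_1,\dots,b_5\rangle$ and $\rho=\langle 1\rangle\perp\rho'$ one gets $q\sim\langle b_2,\dots,b_5\rangle\perp(-b_1\rho')$, of dimension $11$, and the argument cannot be iterated. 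What actually closes the proof in the paper is linkage: the explicit normal forms $c(q)=(uc,\pi)+(vc\pi^{\epsilon},\delta)$ (Lemma 4.1) and $e_3(q-q_1)=[w)\cdot(\pi)\cdot(-uv\pi^{\epsilon}\delta)$ (Lemma 4.5 combined with the residue computation in 4.6) produce a $q_1$ and a Pfister form $\rho$ sharing the common $3$-dimensional subform $\langle 1,-\pi,uv\pi^{\epsilon}\delta\rangle$, whence $q\sim q_1\perp-\rho$ has anisotropic dimension at most $5+8-2\cdot 3=7<9$. This common-subform step is the real content of Proposition 4.6 and is not recoverable from $I^4=0$ and index bounds alone; it depends on both invariants being ramified only along the two branches $(\pi)$ and $(\delta)$. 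Two smaller points: your claim that every class in $H^3(F_\xi,\Z/2)$ is a symbol is stronger than what is needed or proved (Lemma 4.5 treats only classes ramified along $(\pi)$ and $(\delta)$), and the vanishing of $H^3_{nr}$ (Proposition 4.4) is itself established by a quadratic-form argument (Ojanguren's splitting theorem, Colliot-Th\'el\`ene--Sansuc, [MMW], and Hensel's lemma over the complete local ring), not purely by Kato's theory as your sketch suggests.
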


This leads us to  the following
 
\begin{conj}  Suppose $K$ is a complete discretely valued field of
characteristic 0 with residue field $\kappa$ of characteristic 2. 
If $F$ is a function field of a curve over $K$, then $u(F) \leq
8[\kappa: \kappa^2]$. 
\end{conj}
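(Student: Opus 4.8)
The plan is to reduce the computation of $u(F)$ to a single local estimate, by combining the field patching of Harbater--Hartmann--Krashen (already used to prove the main results above) with Springer's theorem, and exploiting that $\mathrm{char}(F)=0$, so that the classical theory of quadratic forms is available over $F$ and over every field produced by the patching. Throughout, write $q=[\kappa:\kappa^2]$.

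First I would fix a regular projective model $\mathcal{X}\to\Spec\,\OO_K$ of the given curve, with closed fibre $X$ a curve over $\kappa$, and invoke the Harbater--Hartmann--Krashen local--global principle for isotropy of quadratic forms of dimension $\ge 3$ over $F$: such a form is isotropic over $F$ if and only if it is isotropic over each patching field $F_\xi$ attached to the open pieces of $X$ and to the branches at a chosen finite set of closed points of $X$. This principle rests only on the patching machinery over the complete base and on $\mathrm{char}(F)\ne 2$; it is insensitive to the residue characteristic, so it applies verbatim here. Consequently any form of dimension exceeding $\max_\xi u(F_\xi)$ is isotropic over every $F_\xi$ and hence over $F$, giving $u(F)\le \max_\xi u(F_\xi)$; it therefore suffices to bound each $u(F_\xi)$ by $8q$.

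Next I would identify the patching fields. Each $F_\xi$ is complete for a discrete valuation, is of characteristic $0$, and has residue field $E_\xi$ of characteristic $2$: for an open patch $E_\xi$ is the function field $\kappa(U)$ of a component of $X$, and for a branch patch $E_\xi$ is a complete discretely valued field of characteristic $2$ over a finite extension $\kappa(P)$ of $\kappa$. Since finite extensions preserve the invariant $[\,\cdot:\cdot^2\,]$, while adjoining one new element (a transcendental in the first case, a uniformiser in the second) doubles it, in every case $[E_\xi:E_\xi^2]=2q$. The whole theorem thus reduces to the following ``wild'' analogue of Springer's theorem.
\begin{quote}
\emph{Key estimate.} If $L$ is a complete discretely valued field of characteristic $0$ whose residue field $E$ has characteristic $2$, then $u(L)\le 4[E:E^2]$.
\end{quote}
Granting the Key estimate with $[E_\xi:E_\xi^2]=2q$ yields $u(F_\xi)\le 4\cdot 2q=8q$ for every patch, and hence $u(F)\le 8q=8[\kappa:\kappa^2]$, as conjectured.

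The main obstacle is precisely the Key estimate. Springer's identity $u(L)=2\,u(E)$ holds only when the residue characteristic differs from $2$; in residue characteristic $2$ the underlying splitting $W(L)\cong W(E)\oplus W(E)$ fails, because the ``second residue'' along a uniformiser is governed not by $E$ but by the wildly ramified square-class group of $L$, whose size reflects both $[E:E^2]$ and the absolute ramification of $2$. The constant is pinned down by $L=\Q_2$, $E=\mathbb{F}_2$, where $[E:E^2]=1$ and $u(\Q_2)=4$, so the factor $4$ --- rather than Springer's $2$ --- is the unavoidable cost of the wild part. When $[E:E^2]=2$, which is all that the proven case $u(F)\le 8$ (for $\kappa$ perfect) requires, the relevant square classes of $L$ can be enumerated directly and the estimate is accessible; the difficulty in the general conjecture is to show that raising the $2$-rank of $E$ inflates $u(L)$ only \emph{linearly} in $[E:E^2]$, with no extra cross-terms arising from the interaction of the wild ramification with the several elements of a $2$-basis of $E$. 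Equivalently, one must bound the symbol length in the $2$-torsion cohomology of $L$ uniformly in the $2$-rank of $E$; securing this uniform control is the step I expect to be genuinely hard, and is why the statement is posed as a conjecture rather than proved.
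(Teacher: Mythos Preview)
The statement is a \emph{conjecture} in the paper; there is no proof to compare against. Your write-up is therefore best read as a proof \emph{strategy}, and on that level it contains a structural error that would block the argument even granting your Key estimate.

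The gap is in your description of the patching fields. In the Harbater--Hartmann--Krashen set-up used throughout the paper, the local fields come in two flavours: the $F_U$ attached to open subsets $U$ of the special fibre, and the $F_P$ attached to closed points $P$. The $F_U$ are indeed complete discretely valued fields of characteristic $0$ with residue field $\kappa(U)$ of characteristic $2$ and $2$-rank one more than that of $\kappa$, so your Key estimate would handle them. But the $F_P$ are \emph{not} complete discretely valued fields: each is the fraction field of the two-dimensional complete regular local ring $\hat{\OO}_{\XX,P}$. Your claim that ``each $F_\xi$ is complete for a discrete valuation'' and that the closed-point residue field is itself a CDVF of characteristic $2$ is simply false in this framework. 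One cannot replace the $F_P$ by the one-dimensional branch fields $F_\wp$ without losing the local--global principle: isotropy over every branch does not, by itself, force isotropy over $F_P$ or over $F$.

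This is not a technicality. Look at how the paper proves the perfect case (Theorem~4.7): the $F_U$ are dispatched quickly, but the $F_P$ require the entire two-dimensional local analysis of \S4 (Lemmas~4.1--4.5 and Proposition~4.6), which controls $H^2$ and $H^3$ of the fraction field of a two-dimensional complete regular local ring with perfect residue field of characteristic $2$. An honest strategy for the conjecture would need, alongside your Key estimate for CDVFs, a parallel two-dimensional estimate: for $A$ a complete regular local ring of dimension $2$ with residue field $\kappa$ of characteristic $2$ and $2$-rank $n$, bound the anisotropic dimension over $\mathrm{Frac}(A)$ of forms with controlled ramification by $8\cdot 2^{n}$. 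That is where the real difficulty sits, and it is not subsumed by the one-dimensional Key estimate you isolate.
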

 
\bigskip

{\small\noindent{\bf Acknowledgements.}  %
  We thank Asher Auel for his several critical comments on the text.}

\section{ Module of differentials and Milnor $k$-groups} 

We begin by recalling two well-known results (1.1, 1.2) concerning
the Brauer $\ell$-dimension of a field.   Lemma 1.1 reduces the
computation of the Brauer $\ell$-dimension to bounding indices of prime
exponent algebras. Corollary 1.2 computes the Brauer $p$-dimension for
fields of characteristic $p>0$.  

\begin{lemma} Let $k$ be any  field and $\ell$ a prime. If for
every central simple algebra $A$ of period $\ell$ over a finite extension
$K$ of $k$, ind$(A)$ divides $\ell^d$, then Br$_\ell$dim$(k) \leq  d$. 
\end{lemma}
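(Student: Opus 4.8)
The plan is to induct on the $\ell$-adic valuation of the period. First I would observe that, since the period of a central simple algebra always divides its index and the two have the same prime divisors, any central simple algebra whose period is a power of $\ell$ automatically has index a power of $\ell$. Hence it suffices to show the following: for every finite extension $L$ of $k$ and every central simple algebra $A$ over $L$ with $\mathrm{per}(A) = \ell^m$, one has $\mathrm{ind}(A) \mid \ell^{md}$.

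I would prove this by induction on $m$. The case $m = 0$ is trivial, and $m = 1$ is precisely the hypothesis applied to the finite extension $L$ of $k$. For the inductive step, suppose $m \geq 2$ and that the claim holds at level $m-1$ over every finite extension of $k$. Given $A$ over $L$ with $\mathrm{per}(A) = \ell^m$, I would pass to the class $\ell^{m-1}[A] \in \Br(L)$, which has period $\ell$. Let $B$ be the division algebra representing it; by the hypothesis, $\mathrm{ind}(B) = \deg(B)$ divides $\ell^d$. Taking a maximal subfield $L' \subseteq B$ produces a finite extension $L'/L$ with $[L':L] = \deg(B) \mid \ell^d$ that splits $B$, so $\ell^{m-1}[A_{L'}] = 0$ in $\Br(L')$ and therefore $\mathrm{per}(A_{L'})$ divides $\ell^{m-1}$.

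Since $L'$ is again a finite extension of $k$, the induction hypothesis yields $\mathrm{ind}(A_{L'}) \mid \ell^{(m-1)d}$. Combining this with the standard divisibility $\mathrm{ind}(A) \mid [L':L]\cdot\mathrm{ind}(A_{L'})$, valid for any finite field extension $L'/L$, I would conclude that $\mathrm{ind}(A) \mid \ell^d\cdot\ell^{(m-1)d} = \ell^{md} = \mathrm{per}(A)^d$, which closes the induction and gives $\mathrm{Br}_\ell\mathrm{dim}(k) \leq d$.

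There is no genuinely hard step in this argument; the two points that need a little attention are that the splitting field $L'$ of $\ell^{m-1}[A]$ can be chosen with $[L':L]$ equal to the index of that class (hence dividing $\ell^d$), rather than being merely some splitting field of possibly larger degree, and that every field produced along the way remains a finite extension of $k$, so that both the hypothesis and the induction hypothesis are applicable at each stage.
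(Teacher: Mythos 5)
Your proof is correct and is essentially the paper's argument: both perform the same d\'evissage by induction on the exponent of the period, passing at each stage to a splitting field of degree dividing a power of $\ell$ and using that the index divides the degree of a splitting field times the index over it. The only cosmetic difference is the order of the two reductions: the paper first splits $A^{\otimes \ell}$ (of period $\ell^{n-1}$) using the induction hypothesis and then applies the period-$\ell$ hypothesis to what remains, whereas you first split the period-$\ell$ class $\ell^{m-1}[A]$ using the hypothesis and then invoke the induction hypothesis over the resulting extension.
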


\begin{proof} Let $k'$ be a finite extension of $k$ and  $A$ 
a central simple algebra over $k'$ of
period $\ell^n$ for some $n$. We prove by induction on $n$ that ind$(A)$
divides $(\ell^n)^d$. The case $n= 1$ is the given hypothesis. Suppose
that  the lemma holds for   $n-1$. Let $A' = A^{\otimes \ell}$. Then per$(A') = \ell^{n-1}$.
By the induction hypothesis  ind$(A')$ divides $(\ell^{n-1})^d$. 
Thus there exists a finite extension $K$ of $k'$ of degree
dividing $(\ell^{n-1})^d$ such that  $A' \otimes_{k'} K$ is a matrix
algebra.   In particular per$(A \otimes_{k'} K) = \ell$ and by the
hypothesis ind$(A \otimes _{k'} K)$ divides $\ell^d$. 
 Thus  there exists a  finite extension  $L$ of $K$  of degree dividing $\ell^d$  such
that $A \otimes_{k'} L$ is a matrix algebra.  Since  $[L : k'] = [L : K][K : k']$
divides $\ell^d(\ell^{n-1})^d = (\ell^n)^d$,  ind$(A)$ divides $(\ell^n)^d$. 
\end{proof}

\begin{cor}(Albert) Let $\kappa$ be a field of characteristic $p > 0$. 
Suppose that the $p$-rank of $\kappa$ is $n$.  Then Br$_p$dim$(\kappa) \leq n$. 
\end{cor}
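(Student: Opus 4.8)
The plan is to reduce, via Lemma 1.1, to a statement about the index of a single period‑$p$ algebra, and then to invoke Albert's structural description of the $p$‑torsion of the Brauer group in characteristic $p$. First I would record that the $p$‑rank is stable under finite extensions: if $K/\kappa$ is finite then the Frobenius map $x\mapsto x^p$ is an isomorphism carrying the pair $(K,\kappa)$ onto $(K^p,\kappa^p)$, so $[K:K^p]\cdot[K:\kappa]=[K:K^p]\cdot[K^p:\kappa^p]=[K:\kappa^p]=[\kappa:\kappa^p]\cdot[K:\kappa]$, whence $[K:K^p]=[\kappa:\kappa^p]=p^n$. By Lemma 1.1 (with $\ell=p$ and $d=n$) it therefore suffices to prove that $\mathrm{ind}(A)$ divides $p^n$ for every central simple algebra $A$ of period $p$ over an arbitrary finite extension $K$ of $\kappa$; such a $K$ again has characteristic $p$ and $p$‑rank $n$.

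Next I would use the theorem of Albert (and Teichmüller) that every class in $\Brp(K)$, for $K$ of characteristic $p$, is a sum of classes of cyclic algebras of degree $p$; thus $A$ is Brauer‑equivalent to a tensor product $[a_1,b_1)\otimes\cdots\otimes[a_m,b_m)$ with $a_i\in K$ and $b_i\in K^\times$. A short computation shows that each symbol $[a_i,b_i)$ is split by $K(b_i^{1/p})$: after adjoining $\beta=b_i^{1/p}$ to the centre, $\beta^{-1}$ times the ``$v$''‑generator is a unipotent element that still acts nontrivially by conjugation on the ``$u$''‑generator, which is impossible in a division algebra of prime degree, so the algebra must split. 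Hence $A$ is split by $L:=K(b_1^{1/p},\dots,b_m^{1/p})$. Since $L^p\subseteq K$, the field $L$ is intermediate in $K^{1/p}/K$, so $[L:K]$ divides $[K^{1/p}:K]=[K:K^p]=p^n$. As the index divides the degree of any finite splitting field and is a Brauer invariant, $\mathrm{ind}(A)$ divides $[L:K]$, hence divides $p^n$, which completes the proof via Lemma 1.1.

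The only substantial input is Albert's structure theorem for $p$‑algebras — equivalently, Kato's description of $\Brp(K)$ in terms of the module of differentials $\Omega^1_K$, which in the degree‑$2$ field case writes each class as a form $\sum_i c_i\,dx_i$ relative to a $p$‑basis $x_1,\dots,x_n$ and so exhibits it as a sum of $n$ symbols $[c_ix_i,x_i)$ — and I would simply cite it. The remaining ingredients (stability of the $p$‑rank, splitting of $[a,b)$ by $K(b^{1/p})$, the tower estimate $[L:K]\mid p^n$) are routine. So there is no real obstacle; the one point worth flagging is that the splitting field $L$, although built from possibly many generators $b_i^{1/p}$, is forced to lie inside $K^{1/p}$ and hence has degree dividing $p^n$ regardless of how large $m$ is.
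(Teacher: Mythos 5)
Your proof is correct, and its skeleton --- reduce to period $p$ via Lemma 1.1, observe that the $p$-rank is unchanged under finite extensions, and conclude from $\mathrm{ind}(A)\mid [K^{1/p}:K]=[K:K^p]=p^n$ --- is exactly the paper's. The one genuine difference is the black box used to show that $K^{1/p}$ (or a subfield of it) splits $A$: the paper cites Albert's splitting theorem directly ([A], Chap.~VII, Theorem 32: a central simple algebra of period $p$ over a field of characteristic $p$ becomes a matrix algebra over $K^{1/p}$), which makes the whole proof a two-line citation, whereas you invoke the stronger Albert--Teichm\"uller generation theorem (every class in $\Brp(K)$ is a sum of degree-$p$ cyclic classes $[a_i,b_i)$) and then re-derive the splitting by hand via the unipotent element $v\beta^{-1}$ and the observation that $K(b_1^{1/p},\dots,b_m^{1/p})\subseteq K^{1/p}$. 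Your symbol computation and the tower estimate are both right, and your Frobenius argument for the invariance of $[K:K^p]$ replaces the paper's citation of Bourbaki ([B], A.V.135, Corollary 3); the trade-off is that generation by cyclics is a deeper input than the splitting statement you actually need, so the paper's route is the more economical of the two, while yours is self-contained modulo that single decomposition theorem.
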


\begin{proof}   Let $k'$ be a finite extension of $k$ and $A$ be    
a central simple algebra over $k'$ of period $p$. 
By (1.1),  it is enough to show that  ind$(A)$ divides $p^n$. 
By ([A], Chap. VII. Theorem 32),  $A
\otimes_{k'} k'^{1/p}$ is  a matrix algebra and hence ind$(A)$ divides $[k'^{1/p} : k']$. 
Since  $[k'^{1/p} : k'] = [k' : k'^p] = [k  : k^p] =  p^n$  ([B], A.V.135, Corollary 3), 
  ind$(A)$ divides $p^n$. 
\end{proof}

\vskip 3mm

Let $\kappa$ be a field of characteristic $p > 0$. 
Let $\Omega^1_\kappa$ be the module of differentials on $\kappa$. Then
the dimension of $ \Omega_\kappa^1$ as a $\kappa$-vector space is equal to
the  $p$-rank of $\kappa$.   
Let   $\Omega^2_\kappa$ be the second exterior power of
$\Omega^1_\kappa$.  
Let $K_2(\kappa)$ be the Milnor  $K$-group and $k_2(\kappa) =
K_2(\kappa)/pK_2(\kappa)$.  Then there is an injective homomorphism  (cf.,
[CT], 3.0) 
$$h^2_p : k_2(\kappa) \to
\Omega^2_\kappa
$$ 
given by 
$$(a, b) \mapsto \frac{da}{a}\wedge
\frac{db}{b}.$$

Suppose $\kappa = \kappa^p(a_1, \cdots , a_n)$.  Then every element in
$\Omega^2_\kappa$ is a linear combination of elements $da_i \wedge da_j$.
In fact if $a_1, \cdots , a_n$ is a $p$-basis of $\kappa$, then $da_i
\wedge da_j$, $1 \leq i < j \leq n$ is a basis of $\Omega^2_\kappa$ over
$\kappa$. 

\vskip 3mm

We now record a few facts about  $\Omega_\kappa^2$
and $k_2(\kappa)$.

\begin{lemma} 
Let $a, b \in \kappa^*$ be $p$-dependent. Then $(a, b) = 0  \in  k_2(\kappa)$. 
\end{lemma}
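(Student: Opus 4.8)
The plan is to show that if $a,b\in\kappa^*$ are $p$-dependent, then the symbol $(a,b)$ vanishes in $k_2(\kappa)=K_2(\kappa)/pK_2(\kappa)$, and the natural approach is to exploit the injectivity of $h^2_p\colon k_2(\kappa)\to\Omega^2_\kappa$ recorded just above the statement. Since $h^2_p(a,b)=\tfrac{da}{a}\wedge\tfrac{db}{b}$, it suffices to prove that this element is zero in $\Omega^2_\kappa$; by injectivity this forces $(a,b)=0$ in $k_2(\kappa)$.

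First I would unwind what $p$-dependence means: $a,b\in\kappa^*$ are $p$-dependent if the differentials $da,db$ are linearly dependent over $\kappa$ in $\Omega^1_\kappa$ (equivalently, one of $a,b$ lies in $\kappa^p(\text{the other})$, or both lie in a common $\kappa^p$-subfield generated by a single element). Concretely there exist $\lambda,\mu\in\kappa$, not both zero, with $\lambda\,da+\mu\,db=0$ in $\Omega^1_\kappa$. Then in $\Omega^2_\kappa=\bigwedge^2\Omega^1_\kappa$ we have $da\wedge db = \tfrac1\mu\,da\wedge(\mu\,db) = \tfrac1\mu\,da\wedge(-\lambda\,da) = 0$ (handling the case $\mu=0$ symmetrically, using $\lambda\neq0$). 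Since $\tfrac{da}{a}\wedge\tfrac{db}{b}=\tfrac{1}{ab}\,da\wedge db$ and $\tfrac{1}{ab}\in\kappa^*$, we conclude $h^2_p(a,b)=0$.

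Finally, I would invoke the injectivity of $h^2_p$ (cited from [CT], 3.0, as stated in the excerpt) to deduce $(a,b)=0$ in $k_2(\kappa)$, which is the claim. A small point worth remarking on is the degenerate case where $a$ or $b$ is itself a $p$-th power in $\kappa^*$, say $a=c^p$: then $\tfrac{da}{a}=p\,\tfrac{dc}{c}=0$ in characteristic $p$, so $h^2_p(a,b)=0$ directly, and again injectivity gives $(a,b)=0$. I do not anticipate a serious obstacle here: the only thing to be careful about is making the notion of "$p$-dependent" precise and checking that in every case it yields a $\kappa$-linear relation between $da$ and $db$ in $\Omega^1_\kappa$, after which the wedge computation is immediate and the conclusion follows formally from injectivity of $h^2_p$.
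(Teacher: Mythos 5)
Your proof is correct and follows essentially the same route as the paper: both arguments reduce to showing $da\wedge db=0$ in $\Omega^2_\kappa$ (the paper by writing $a=\sum\lambda_i^p b^i$ and computing $da$ as an explicit $\kappa$-multiple of $db$, you by the equivalent differential criterion for $p$-dependence) and then conclude via the injectivity of $h^2_p$. No gap.
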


\begin{proof}  If $a$ is a $p^{\rm th}$ power in $\kappa$, then $da = 0$. 
Suppose $a  = \sum \lambda_i^p b^i$  
for some $\lambda_i \in \kappa$.  Then   $da =
(\sum \lambda_i^pib^{i-1})db$  
and $da\wedge db = 0$.  In particular  $\frac{da}{a}  \wedge
\frac{db}{b}  = 0   \in \Omega^2_\kappa$. 
Since $h^2_p((a,b)) =  \frac{da}{a} \wedge \frac{db}{b}$ and $h^2_p$ is injective, 
we have $(a, b) = 0  \in  k_2 (\kappa)$.  
\end{proof}

\begin{lemma} Suppose that $\kappa = \kappa^p(a_1, \cdots
a_n)$. Then the natural homomorphism $k_2(\kappa) \to
k_2(\kappa(\sqrt[p]{a_1}, \cdots , \sqrt[p]{a_{n-1}}))$ is zero. 
\end{lemma}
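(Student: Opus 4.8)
The plan is to transport the problem from Milnor $k$-groups to modules of differentials, where the hypothesis becomes transparent: in $L := \kappa(\sqrt[p]{a_1},\dots,\sqrt[p]{a_{n-1}})$ each of $a_1,\dots,a_{n-1}$ is a $p$-th power, so its differential vanishes. Since $k_2(\kappa)$ is generated by symbols $(x,y)$ with $x,y\in\kappa^*$, it suffices to show that every such symbol maps to $0$ in $k_2(L)$. The homomorphism $h^2_p\colon k_2(L)\to\Omega^2_L$ (the map of (3.0) of [CT], applied with $L$ in place of $\kappa$) is injective, so it is enough to prove that the image of $(x,y)$ in $\Omega^2_L$ is zero. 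By functoriality of $h^2_p$ together with functoriality of the exterior differential on absolute Kähler differentials, that image equals the image of $h^2_p((x,y))=\frac{dx}{x}\wedge\frac{dy}{y}\in\Omega^2_\kappa$ under the natural map $\Omega^2_\kappa\to\Omega^2_L$. Thus the whole statement reduces to: the natural map $\Omega^2_\kappa\to\Omega^2_L$ is the zero map.

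To establish this, set $b_i=\sqrt[p]{a_i}$ for $1\le i\le n-1$; then $a_i=b_i^{\,p}$, so in $\Omega^1_L$ we have $da_i=d(b_i^{\,p})=p\,b_i^{\,p-1}\,db_i=0$. Because $\kappa=\kappa^p(a_1,\dots,a_n)$, the discussion preceding the lemma shows that $\Omega^2_\kappa$ is spanned as a $\kappa$-vector space by the elements $da_i\wedge da_j$ with $1\le i<j\le n$. Each such generator maps to $da_i\wedge da_j$ in $\Omega^2_L$; since $i<j\le n$ forces $i\le n-1$, the factor $da_i$ is already zero in $\Omega^1_L$, so $da_i\wedge da_j=0$ in $\Omega^2_L$. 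Hence $\Omega^2_\kappa\to\Omega^2_L$ kills a spanning set and is the zero map.

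Combining the two steps: for any $x,y\in\kappa^*$ the image of $(x,y)$ under $k_2(\kappa)\to k_2(L)$ lies in the kernel of the injective map $h^2_p$ for $L$, hence is $0$; since symbols generate $k_2(\kappa)$, the homomorphism $k_2(\kappa)\to k_2(L)$ is zero. The only thing needing (routine) verification is the commutativity of the evident square relating $h^2_p$ over $\kappa$ and over $L$ with the base-change homomorphisms, which is immediate from the explicit formula for $h^2_p$ and functoriality of $d$. I do not expect any real obstacle here; the entire content of the argument is the observation that adjoining the $p$-th roots of $a_1,\dots,a_{n-1}$ annihilates $da_1,\dots,da_{n-1}$ in $\Omega^1_L$, and $\Omega^2_\kappa$ is generated by wedges that each involve one of these.
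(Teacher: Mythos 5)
Your proof is correct, but it is organized differently from the paper's. The paper argues symbol by symbol: for $(a,b)\in k_2(\kappa)$ it shows that over $\kappa'=\kappa(\sqrt[p]{a_1},\dots,\sqrt[p]{a_{n-1}})$ either $a$ becomes a $p$-th power, or else $\kappa=\kappa'^p(a)$ (a degree count using $\kappa'^p=\kappa^p(a_1,\dots,a_{n-1})$ and $[\kappa:\kappa'^p]\leq p$), so that $a$ and $b$ are $p$-dependent over $\kappa'$ and the symbol dies by Lemma~1.3. You instead prove the stronger statement that the restriction map $\Omega^2_\kappa\to\Omega^2_L$ is identically zero --- because $\Omega^2_\kappa$ is spanned by the $da_i\wedge da_j$ with $i<j\leq n$, and each such wedge involves some $da_i$ with $i\leq n-1$, which is killed in $\Omega^1_L$ since $a_i$ becomes a $p$-th power --- and then pull this back through the injective map $h^2_p$ for $L$ via the (evidently commuting) square. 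Both arguments ultimately rest on the same two ingredients, namely the injectivity of $h^2_p$ over the extension field and the vanishing of $da_i$ there; your version trades the paper's case analysis and field-theoretic degree count for the observation that the whole map on $\Omega^2$ vanishes, which is arguably cleaner and yields a slightly stronger conclusion, while the paper's route keeps the work inside the $p$-dependence formalism it has already set up in Lemma~1.3.
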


\begin{proof}  Let $(a, b ) \in k_2(\kappa)$.  Let $\kappa' =
\kappa(\sqrt[p]{a_1}, \cdots , \sqrt[p]{a_{n-1}})$. 
If $a$  is a $p^{\rm th}$ power in $\kappa'$, 
then the image of $(a, b ) \in
k_2(\kappa' )$ is zero.  Suppose
that  $a$ is not a $p^{\rm th}$ power  in
$\kappa'$. Since $\kappa'^p = \kappa^p(a_1, \cdots , a_{n-1})$,  $\kappa =
\kappa'^p(a_n)$ and hence $[\kappa : \kappa'^p] \leq p$. Since $a \not\in
\kappa'^p$, we have $\kappa = \kappa'^p(a)  = \kappa^p(a_1, \cdots , a_{n-1}, a)$.
In particular $a$ and $b$ are $p$-dependent over $\kappa'$ and hence, 
by (1.3),   $(a, b) = 0 \in k_2(\kappa')$.   Since every element
in $k_2(\kappa)$ is  a sum of elements of the form $(a, b)$, the image
of $k_2(\kappa)$ in $k_2(\kappa')$ is zero.
\end{proof}

\begin{lemma} Let $a_1, \cdots , a_n \in \kappa^*$ be  
  $p$-independent over  $\kappa$ and $\kappa' =  \kappa(\sqrt[p^{r_1}]{a_1},
\cdots , \sqrt[p^{r_n}]{a_n})$. Then every element in the  kernel of
the natural homomorphism $\Omega_\kappa^2 \to \Omega_{\kappa'}^2$ is
of the form $da_1 \wedge f_1 + \cdots + da_n \wedge f_n$ for some
$f_i \in \Omega_\kappa^1$. 
\end{lemma}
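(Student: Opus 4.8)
The plan is to compute the kernel completely; after a harmless reduction one sees it is exactly $\{da_1\wedge f_1+\cdots+da_n\wedge f_n : f_i\in\Omega^1_\kappa\}$. One may assume $r_i\geq 1$ for every $i$: if $r_i=0$ then $\sqrt[p^{r_i}]{a_i}=a_i\in\kappa$, so deleting this $a_i$ from the list leaves $\kappa'$ unchanged while shrinking the set $\{\sum_i da_i\wedge f_i\}$, which only strengthens the assertion. Since $a_1,\dots,a_n$ are $p$-independent over $\kappa$ they extend to a $p$-basis $\{a_i\}_{i\in I}$ of $\kappa$ with $\{1,\dots,n\}\subseteq I$, and $p$-independence also gives $[\kappa':\kappa]=p^{r_1+\cdots+r_n}$. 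Write $b_i=\sqrt[p^{r_i}]{a_i}$ for $i\leq n$, so that $b_i^{p^{r_i}}=a_i$.

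Next I would check that $B:=\{b_i : i\leq n\}\cup\{a_i : i\in I,\ i>n\}$ is a $p$-basis of $\kappa'$. Generation is clear, since $\kappa'=\kappa^p(a_i:i\in I)(b_1,\dots,b_n)=\kappa^p\bigl(B\bigr)\subseteq\kappa'^p\bigl(B\bigr)$. For $p$-independence one uses $[\kappa':\kappa'^p]=[\kappa:\kappa^p]$ for the finite extension $\kappa'/\kappa$ (cf.\ [B], as in the proof of (1.2)): the set $B$ has cardinality equal to the $p$-rank of $\kappa$, hence to that of $\kappa'$, and a generating set of that cardinality must be a $p$-basis. Thus $\Omega^1_\kappa$ has basis $\{da_i\}_{i\in I}$, and $\Omega^1_{\kappa'}$ has basis $\{db_i : i\leq n\}\cup\{da_i : i>n\}$.

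With these bases in hand the maps are transparent. In $\Omega^1_{\kappa'}$ one has, for $i\leq n$, $da_i=d(b_i^{p^{r_i}})=p^{r_i}b_i^{p^{r_i}-1}\,db_i=0$ because $\mathrm{char}\,\kappa=p$ and $r_i\geq 1$, whereas for $i>n$ the element $da_i$ remains a basis vector. Hence in $\Omega^2$ the basis element $da_i\wedge da_j$ ($i<j$) maps to $0$ as soon as $i\leq n$, while for $n<i<j$ it maps to the basis element $da_i\wedge da_j$ of $\Omega^2_{\kappa'}$. As the latter images are linearly independent over $\kappa'$, the kernel of $\Omega^2_\kappa\to\Omega^2_{\kappa'}$ is precisely the $\kappa$-span of $\{da_i\wedge da_j : 1\leq i\leq n,\ i<j\}$; and any element of that span is $\sum_{i\leq n}\sum_{j>i}c_{ij}\,da_i\wedge da_j=\sum_{i=1}^n da_i\wedge f_i$ with $f_i=\sum_{j>i}c_{ij}\,da_j\in\Omega^1_\kappa$, as required.

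The step I expect to cost the most is the verification that $B$ is a $p$-basis of $\kappa'$ — essentially the fact that the $p$-rank is unchanged under a finite extension, $[\kappa':\kappa'^p]=[\kappa:\kappa^p]$, which should be quoted rather than reproved. A basis-free alternative avoids it: since the $a_i$ are $p$-independent, $\kappa'\cong\kappa[x_1,\dots,x_n]/(x_1^{p^{r_1}}-a_1,\dots,x_n^{p^{r_n}}-a_n)$, and the conormal exact sequence for this presentation, together with $d(x_i^{p^{r_i}}-a_i)=-da_i$, yields $\Omega^1_{\kappa'}\cong\bigl((\Omega^1_\kappa\otimes_\kappa\kappa')/\sum_{i=1}^n\kappa'\,da_i\bigr)\oplus\bigoplus_{i=1}^n\kappa'\,dx_i$; one then gets $\ker\bigl(\Omega^2_\kappa\otimes_\kappa\kappa'\to\Omega^2_{\kappa'}\bigr)=\bigl(\sum_{i=1}^n da_i\wedge\Omega^1_\kappa\bigr)\otimes_\kappa\kappa'$, and faithful flatness of $\kappa'/\kappa$ pulls this back to $\sum_{i=1}^n da_i\wedge\Omega^1_\kappa$, proving the lemma without any finiteness assumption.
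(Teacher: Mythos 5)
Your main argument is correct and is essentially the paper's own proof: extend $a_1,\dots,a_n$ to a $p$-basis of $\kappa$, observe that $da_i\mapsto 0$ in $\Omega^1_{\kappa'}$ for $i\le n$ while the wedges of the remaining basis elements stay linearly independent over $\kappa'$, and read off the kernel (your version even pins it down exactly, and your reduction to $r_i\ge 1$ handles an edge case the paper leaves implicit). The one soft spot is your justification that $B$ is a $p$-basis of $\kappa'$ by a cardinality count, which is inconclusive when the $p$-rank is infinite (the lemma carries no finiteness hypothesis and the paper invokes it in that generality in (1.6)); the paper simply asserts this standard fact, and your conormal-sequence alternative covers the general case, so this is a presentational quibble rather than a gap.
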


\begin{proof} Let $B \subset  \kappa^*$ be such
that $B \cap \{ a_1, \cdots a_n \} = \emptyset$ and $B \cup \{ a_1,
\cdots , a_n \}$ is a $p$-basis of $\kappa$.  Then $B \cup \{
\sqrt[p^{r_1}]{a_1}, \cdots , \sqrt[p^{r_n}]{a_n} \}$ is  a $p$-basis
of $\kappa'$.  Let $\alpha \in \Omega^2_\kappa$ be in the kernel of 
$\Omega^2_\kappa \to \Omega^2_{\kappa'}$.  Then $\alpha = \sum
\lambda_{ij} da_i \wedge da_j$ with $1 \leq i < j\leq m$ and 
$a_{n+1}, \cdots , a_m \in B$, $\lambda_{i} \in \kappa$. 
Since the image of $da_i \wedge da_j$ in $\Omega^2_{\kappa'}$ is
zero for $1 \leq i \leq n$ and  the image
of $\alpha$ in $\Omega_{\kappa'}^2$ is zero, the image of 
 $\sum \lambda_{ij} da_i \wedge da_j$,   $n+1 \leq i < j \leq m$, in $\Omega^2_{\kappa'}$
 is zero.  Since $B$ is $p$-independent over
$\kappa'$ and $a_{n+1},  \cdots , a_m \in B$,  $da_{i} \wedge da_j$, $
n+1 \leq i < j \leq m$  are linearly independent over $\kappa'$ and hence
$\lambda_{ij}  = 0$  for $n+1 \leq i < j
  \leq m$. Thus   $\alpha = \sum \lambda_{ij} da_i \wedge 
da_j$ with $1 \leq i < j \leq n$.  
\end{proof}

\begin{lemma} Let  $a_1, \cdots ,a_{2n} \in \kappa^*$   be  $p$-independent in
$\kappa$.  Let $\kappa'$ be an extension of $\kappa$ of degree $d$ and
$\lambda_1, \cdots , \lambda_n \in \kappa^*$. If
$d < p^{n}$, then the image of $\lambda_1da_1 \wedge
 da_2  +  \cdots + 
\lambda_n da_{2n-1}  \wedge  da_{2n} $  in
$\Omega_{\kappa'}^2$ is non-zero.  
\end{lemma}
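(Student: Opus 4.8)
The plan is to argue by contradiction: suppose $\kappa'$ has degree $d < p^n$ over $\kappa$ and the element $\omega = \lambda_1\, da_1 \wedge da_2 + \cdots + \lambda_n\, da_{2n-1}\wedge da_{2n}$ dies in $\Omega^2_{\kappa'}$. The point is that a finite extension of degree $<p^n$ can ``absorb'' at most $n-1$ of the $2n$ independent elements $a_i$ into its $p$-th powers, for a counting reason: each time we adjoin a $p$-th root $\sqrt[p]{a}$ with $a\notin\kappa^p(\text{stuff already adjoined})$ we multiply the degree by exactly $p$. More precisely, I would first reduce to the case where $\kappa'/\kappa$ is purely inseparable: if $\kappa'_{\mathrm{insep}}$ is the maximal purely inseparable subextension, then the separable part is linearly disjoint from any purely inseparable extension and in particular induces an isomorphism on $p$-bases, so $\Omega^2_{\kappa'} \supset \Omega^2_{\kappa'_{\mathrm{insep}}}$ compatibly (a nonzero element of $\Omega^2$ over the purely inseparable subfield stays nonzero after a separable base change, since $\Omega^1$ of a separable extension is just base-changed). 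Hence we may assume $\kappa'$ is purely inseparable of degree $d<p^n$ over $\kappa$.

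Next, choose $B\subset\kappa^*$ with $B\cap\{a_1,\dots,a_{2n}\}=\emptyset$ and $B\cup\{a_1,\dots,a_{2n}\}$ a $p$-basis of $\kappa$, exactly as in the proof of~(1.6). Since $\kappa'$ is purely inseparable of degree $d<p^n$ over $\kappa$, it is contained in $\kappa^{1/p^\infty}$, and I claim at most $n-1$ of the $a_i$ can fail to remain part of a $p$-basis of $\kappa'$; equivalently, writing $a_i' = a_i$ viewed in $\kappa'$, the span of $\{da_i'\}_{i=1}^{2n}$ in $\Omega^1_{\kappa'}$ has dimension at least $2n-(n-1)=n+1$. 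Indeed, the kernel of $\bigoplus_{i=1}^{2n}\kappa' \to \Omega^1_{\kappa'}$, $e_i\mapsto da_i'$, has $\kappa'$-dimension equal to the number of ``relations'' introduced, and each such relation corresponds to a factor of $p$ in $[\kappa':\kappa]$ (one adjoins roots of the $a_i$ over $\kappa$ in at most $d$-many total degree); since $d<p^n$, at most $n-1$ independent relations can occur. Applying~(1.6) (with the $a_i$ that have become $p$-dependent playing the role there), the kernel of $\Omega^2_\kappa\to\Omega^2_{\kappa'}$ is spanned by $da_{i_1}\wedge f, \dots$ for those $\le n-1$ indices $i_1,\dots$; write the $p$-basis relabeling so that $a_1,\dots,a_{n+1}$ (say) still have linearly independent differentials in $\Omega^1_{\kappa'}$.

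Finally, I would extract the contradiction by pairing. Among the $n$ wedge blocks $da_{2j-1}\wedge da_{2j}$, at most $n-1$ of the $2n$ indices are ``killed'', so by pigeonhole at least one block $da_{2j_0-1}\wedge da_{2j_0}$ has \emph{both} of its indices surviving, i.e.\ $da_{2j_0-1}'$ and $da_{2j_0}'$ are part of a set of $p$-independent elements of $\kappa'$. Now expand the vanishing of $\omega$ in $\Omega^2_{\kappa'}$ against the basis of $\Omega^2_{\kappa'}$ coming from a $p$-basis of $\kappa'$ containing $a_{2j_0-1}',a_{2j_0}'$: the coefficient of $da_{2j_0-1}'\wedge da_{2j_0}'$ is $\lambda_{j_0}\ne 0$ (each other block $da_{2j-1}\wedge da_{2j}$ with $j\ne j_0$ either survives as a distinct basis wedge or is expressed in terms of wedges not involving the pair $(a_{2j_0-1}',a_{2j_0}')$ once we write everything out in the $p$-basis, since the $a_{2j-1},a_{2j}$ are distinct $p$-basis elements of $\kappa$ from $a_{2j_0-1},a_{2j_0}$), contradicting $\lambda_{j_0}\in\kappa^*$.

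The main obstacle I anticipate is the bookkeeping in the pigeonhole/counting step — namely making rigorous that a purely inseparable extension of degree $<p^n$ can force at most $n-1$ of the $2n$ differentials $da_i$ to become dependent, and that ``surviving'' indices really do remain extendable to a common $p$-basis of $\kappa'$ simultaneously. This is where~(1.6) is the right tool, but one must be careful that the $\le n-1$ ``bad'' indices genuinely behave like the $a_1,\dots,a_m$ of~(1.6): one should first pass to the subfield of $\kappa'$ generated over $\kappa$ by the relevant $p$-power roots of the bad $a_i$ (of degree $\le d < p^n$), apply~(1.6) there, and then note that the remaining extension to $\kappa'$ only adjoins roots disjoint from the surviving $a_i$'s, so it cannot introduce new relations among the surviving differentials.
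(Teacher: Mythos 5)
Your skeleton matches the paper's proof of this lemma — reduce to a purely inseparable extension, bound by $n-1$ the number of linear relations that an extension of degree $<p^n$ can impose on $da_1,\dots,da_{2n}$, locate a block $da_{2j_0-1}\wedge da_{2j_0}$ that survives, and read off the coefficient $\lambda_{j_0}$ — and the dimension count itself is sound. But the decisive last step has a genuine gap. Knowing that $a_{2j_0-1},a_{2j_0}$ extend to a $p$-basis of $\kappa'$ does not give you that the coefficient of $da_{2j_0-1}\wedge da_{2j_0}$ in the expansion of $\omega$ is $\lambda_{j_0}$: for a block $j\ne j_0$ in which \emph{both} $a_{2j-1}$ and $a_{2j}$ fail to lie in your chosen $p$-basis of $\kappa'$, each of $da_{2j-1}$ and $da_{2j}$ expands as a combination of basis differentials that may involve $da_{2j_0-1}$ and $da_{2j_0}$, and then $da_{2j-1}\wedge da_{2j}$ contributes a possibly cancelling multiple of $da_{2j_0-1}\wedge da_{2j_0}$. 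Your parenthetical justification — that $a_{2j-1},a_{2j}$ are $p$-basis elements of $\kappa$ distinct from $a_{2j_0-1},a_{2j_0}$ — controls nothing about their expansions over $\kappa'$. This is exactly what the paper's proof works to arrange: it picks a \emph{maximal} set of the $a_i$, one from each pair, that is $p$-independent together with the generators $b_1,\dots,b_{n-1}$ of $\kappa'$; maximality forces every element of a fully dead block to be $p$-dependent on a set not containing $a_2$, so its differential has no $da_2$-component and the block cannot produce a $da_1\wedge da_2$ term. Without such a device the coefficient extraction does not go through. Relatedly, your pigeonhole is phrased as though the kernel of $\mathrm{span}(da_1,\dots,da_{2n})\otimes\kappa'\to\Omega^1_{\kappa'}$ were spanned by coordinate vectors ("at most $n-1$ indices are killed"); it need not be, although the weaker fact you actually need (some pair lies in a linearly independent subset of size $n+1$) does follow from the dimension bound.

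Two further points. The reduction to the purely inseparable case is done in the wrong order: a finite extension need not be separable over its maximal purely inseparable subextension, so you cannot in general present $\kappa'$ as a separable extension of $\kappa'_{\mathrm{insep}}$. Pass instead to the separable closure $\kappa_1$ of $\kappa$ in $\kappa'$ (over which $\Omega^1$ simply base-changes, so the $a_i$ stay $p$-independent and the element stays nonzero precisely when it is nonzero upstairs) and note that $\kappa'/\kappa_1$ is purely inseparable; this is what the paper does. Also, the paper carries out the coefficient computation under the assumption that $\{a_1,\dots,a_{2n}\}$ is a full $p$-basis of $\kappa$ and reduces the general case to this one by adjoining all $p$-power roots of a complementary set $B'$; keeping $B$ in play, as you do, introduces further uncontrolled terms in the expansions. (Finally, your citations of (1.6) — the statement being proved — should read (1.5).)
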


\begin{proof}  Since $\Omega_\kappa^2 \to \Omega_{\kappa_1}^2 $
is injective for any separable extension $\kappa_1$ of $\kappa$, by
replacing $\kappa$ by the separable closure of $\kappa$ in $\kappa'$,
we assume that $\kappa'$ is purely inseparable over $\kappa$. Then
$\kappa' = \kappa(\sqrt[p^{r_1}]{b_1},
\cdots , \sqrt[p^{r_n}]{b_m})$ for some $b_i \in \kappa^*$ with $\{
b_1, \cdots , b_m \}$ $p$-independent over $\kappa$.  Since the kernels
of the homomorphisms $\Omega_\kappa^2 \to \Omega^2_{\kappa'}$ and
$\Omega^2_\kappa \to \Omega^2_{\kappa(\sqrt[p]{b_1}, \cdots
  ,\sqrt[p]{b_m} )}$ are equal by (1.5), we assume that $\kappa' =
  \kappa(\sqrt[p]{b_1}, \cdots ,\sqrt[p]{b_m})$. Since $[\kappa' :
  \kappa] = p^m < p^n$, we have $m \leq n-1$. Without loss of
  generality we assume that $m = n-1$. 

Suppose that $\{ a_1, \cdots , a_{2n} \}$ is a $p$-basis of
$\kappa$.  Let $r$ be the maximum such that \\ $\{b_1, \cdots , b_{n-1},
a_{i_1},  a_{i_2} \cdots , a_{i_r} \}$ is $p$-independent with no two
$a_{i_s}$ in $\{ a_{2j-1}, a_{2j} \}$. By
reindexing, we assume that $\{b_1, \cdots , b_{n-1},   a_1, a_3, \cdots ,
a_{2r+1}  \} $ is $p$-independent. Then, for each $i \geq 2r+3$,  
$\{b_1, \cdots , b_{n-1},   a_1, a_3, \cdots ,
a_{2r+1} , a_i \}$ is $p$-dependent.   
Since $\{a_1, \cdots ,  \\ a_{2n} \}$ is $p$-basis of $\kappa$, there
exists $1 \leq t_1 <  t_2 < \cdots  <t_q \leq {r+1}  $ such that 
$\{b_1, \cdots , b_{n-1}, a_1, a_3, \cdots , a_{2r + 1}, a_{2t_1}, 
\cdots , a_{2t_q} \}$ is a $p$-basis of $\kappa$.  After reshuffling
the indices, we assume that $t_1 = 1, \cdots, t_q = q$ and $B = \{b_1,
\cdots , b_{n-1},  a_1,   a_3,  \cdots,  a_{2r+1}, a_2, \\ a_4,  \cdots  a_{2q},
  \}$ is a  $p$-basis of $\kappa$ with $q \leq r+1$.  Then $\{ \sqrt[p]{b_1}, \cdots , 
  \sqrt[p]{b_{n-1}},  a_1, a_3, \cdots, a_{2r+1}, \\ a_2, a_4, \cdots , a_{2q} \}$ is a
  $p$-basis of $\kappa'$Ê

Since $B $ is a $p$-basis of $\kappa$,  every
element of $\Omega^2_{\kappa}$ can  be written as a linear combination
of $dx \wedge dy$,  $x,  y \in B$. We now  compute the coefficient of
$da_1 \wedge 
da_2$ in the expansion of $da_{2i+1} \wedge da_{2i+2}$ as a linear
combination of  $dx \wedge dy$,  $x, y \in B$.
Let $1 \leq i \leq r$.   Since $a_{2i+1}  \in B$,  
the coefficient of $da_1 \wedge da_2$ in
the expansion of $da_{2i+1} \wedge da_{2i+2}$ is zero.  
Let $i > r $.  Since $a_{2i+1}$ and $a_{2i+2}$ are  $p$-dependent
over $\{ b_1, \cdots , b_{n-1}, a_1, a_3, \cdots ,a_{2r+1} \}$, 
 in the expansion of $da_{2i+1}$ and $da_{2i+2}$ there is no
$da_2$ term. Hence, there is no $da_1\wedge da_2$ term in the
expansion of $da_{2i+1} \wedge da_{2i+2}$.  

Thus, the coefficient of $da_1 \wedge
da_2$ in the expansion of $ \alpha = \lambda_1  da_1\wedge
 {da_2}  +  \cdots + 
\lambda_n da_{2n-1}  \wedge  da_{2n} $ as a linear
combination of $dx \wedge dy$ with $x,  y \in B$ is  $\lambda_1 $.
Since $\{ \sqrt[p]{b_1}, \cdots , 
  \sqrt[p]{b_{n-1}},  a_1, a_3, \cdots, a_{2r+1}, a_2, a_4, \cdots , a_{2q} \}$ is a
  $p$-basis of $\kappa'$, the image of  $\alpha$ in $\Omega_{\kappa'}^2$ is
  non-zero. 

Let $\{ a_1, \cdots , a_{2n} \}$ be any $p$-independent subset of
$\kappa$. Let $B' \subset \kappa$ be such that $B' \cup \{ a_1, \cdots ,
a_{2n} \}$ is a $p$-basis of $\kappa$ and $B' \cap \{ a_1, \cdots ,
a_{2n} \} = \emptyset$. Let $\kappa_1$ be the extension of $\kappa$
obtained by adjoining $\sqrt[p^d]{b}$ for all $b \in B'$ and $d \geq
1$. Then $\{a_1, \cdots, a_{2n} \}$ is a $p$-basis of
$\kappa_1$. Then $\kappa_1\kappa'$ is an extension of $\kappa_1$ of
degree $< p^n$. Hence the image of $ \lambda_1dc_1 \wedge dc_2  + \cdots + 
 \lambda_n dc_{2n-1}  \wedge  dc_{2n}$ in
$\Omega_{\kappa'\kappa_1}^2$ is non-zero. In particular, the image of 
$ \lambda_1 dc_1 \wedge  dc_2  + \cdots + 
\lambda_n dc_{2n-1}  \wedge  dc_{2n} $ in
$\Omega_{\kappa'}^2$ is non-zero.
 \end{proof}

\section{Brauer $p$-dimension of a complete discretely valued field}

In the section we give a bound for    the Brauer $p$-dimension of a complete discrete
valued field of characteristic zero with residue of characteristic $p> 0$,  in terms of the
$p$-rank of the residue field.

\vskip 3mm

Let $R$ be a complete discrete valuation  ring with field of fractions $K$
and residue field $\kappa$. Let $\nu$ be the discrete valuation on $K$
given by $R$ and $\pi$ a parameter in $R$. Suppose that char$(K) = 0$ and
char$(\kappa) = p>0$ and that $K$ contains a primitive $p^{th}$ root
of unity.   Fixing a primitive $p^{\rm th}$ root of unity $\zeta \in
K^*$,  for $a, b \in K^*$,
let $(a, b) \in { \Brp}(K)$ be the class of the cyclic  
$K$-algebra  defined by $x^p = a,  y^p = b$ and $xy = \zeta yx$.
Let $N = \nu(p)p/(p-1)$.  Let $\br(K)_0  = {{\Brp}}(K)$. For $i \geq 1$, let $U_i = \{ u \in
R^* \mid u \equiv 1 ~{\rm mod}~ (\pi)^i \}$ and  $\br(K)_i$ be the subgroup
of $\Brp(K)$ generated by cyclic algebras $(u, a)$ with $u \in U_i$
and $a \in K^*$.  Since $R$ is complete,  for $n > N$,  every element in $U_n$ is
a $p^{\rm th}$ power and   $\br(K)_{n} = 0$.   

Let $\Omega^1_\kappa$ be the module of differentiales on
$\kappa$. For any $c \in \kappa$, let  $\tilde{c} \in
R$ be a lift of $c$.  For $i \geq 1$,  the maps 
$$  \Omega_\kappa^1   \to  \br(K)_i/\br(K)_{i+1}$$
given by $x\frac{dy}{y} \mapsto (1 + \tilde{x}\pi^i, \tilde{y})$ and 
$$  \kappa \to  \br(K)_i/\br(K)_{i+1}$$
given by $z \mapsto (\pi, 1+ \tilde{z}\pi^i)$ yield a surjective homomorphism
$$ \rho_i  : \Omega_\kappa^1 \oplus \kappa \to  \br(K)_i/\br(K)_{i+1} $$
([K], Thm. 2, cf. [CT], 4.3.1).

Let $K_2(\kappa)$ be the Milnor  $K$-group and $k_2(\kappa) =
K_2(\kappa)/pK_2(\kappa)$.  
The maps 
$$ k_2(\kappa)    \to \br(K)_0/\br(K)_1$$
given by $ (x, y)  \mapsto (\tilde{x}, \tilde{y})$ and  
$$ \kappa^*/\kappa^{*p} \to \br(K)_0/\br(K)_1$$
given by $(z)  \mapsto (\pi, \tilde{z})$ yield  an  isomorphism 
$$\rho_0 : k_2(\kappa)  \oplus \kappa^*/\kappa^{*p} \to \br(K)_0/\br(K)_1$$
([K], Thm.2, cf. [CT], 4.3.1).

\begin{lemma}  Let $R$, $K$ and $\kappa$ be as above.
Suppose  that $\kappa = \kappa^p(a_1, \cdots ,a_n)$ for some $a_1,
\cdots , a_n  \in \kappa$.  Let $u_1, \cdots , u_n \in R$ be lifts of
$a_1, \cdots ,  a_n$.  Let $\alpha \in {\Brp}(K)$.  Then, there
exists   $u \in R^* $ such that  $(\alpha  - (\pi, u))  \otimes 
K(\sqrt[p]{u_1}, \cdots ,\sqrt[p]{u_{n-1}} ) \in  \br(K(\sqrt[p]{u_1},
\cdots ,\sqrt[p]{u_{n-1}}))_1$. 
\end{lemma}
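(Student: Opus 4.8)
The goal is, given $\alpha \in \Brp(K)$, to modify it by a symbol $(\pi, u)$ so that after base change to $\kappa' = K(\sqrt[p]{u_1}, \dots, \sqrt[p]{u_{n-1}})$ it lands in the first piece $\br(K')_1$ of the filtration. The natural strategy is to work modulo $\br(K)_1$, where by Kato's theorem $\rho_0$ gives an isomorphism $k_2(\kappa) \oplus \kappa^*/\kappa^{*p} \xrightarrow{\sim} \br(K)_0/\br(K)_1$. So first I would write the class of $\alpha$ in $\br(K)_0/\br(K)_1$ as $\rho_0(\bar\beta, \bar z)$ for some $\beta \in k_2(\kappa)$ and $z \in \kappa^*$; concretely $\alpha \equiv (\tilde\beta) + (\pi, \tilde z) \pmod{\br(K)_1}$, where $(\tilde\beta)$ denotes the sum of lifted symbols representing $\beta$. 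Set $u = \tilde z \in R^*$ (adjusting by a unit lift, which is all we need since we only care mod $p$-th powers and mod $\br(K)_1$). Then $\alpha - (\pi, u) \equiv (\tilde\beta) \pmod{\br(K)_1}$, i.e. the class of $\alpha - (\pi, u)$ in $\br(K)_0/\br(K)_1$ comes entirely from $k_2(\kappa)$ via the first summand.

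**The key step.** Now base change to $\kappa' = K(\sqrt[p]{u_1}, \dots, \sqrt[p]{u_{n-1}})$. This is an unramified extension of $K$ (the $u_i$ are units), so $\pi$ remains a parameter and the Kato filtration is compatible with base change: the residue field of $K'$ is $\kappa(\sqrt[p]{a_1}, \dots, \sqrt[p]{a_{n-1}})$, and the square
\begin{equation*}
\begin{array}{ccc}
k_2(\kappa) & \longrightarrow & \br(K)_0/\br(K)_1 \\
\downarrow & & \downarrow \\
k_2(\kappa(\sqrt[p]{a_1}, \dots, \sqrt[p]{a_{n-1}})) & \longrightarrow & \br(K')_0/\br(K')_1
\end{array}
\end{equation*}
commutes. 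By Lemma 1.4, since $\kappa = \kappa^p(a_1, \dots, a_n)$, the left vertical map $k_2(\kappa) \to k_2(\kappa(\sqrt[p]{a_1}, \dots, \sqrt[p]{a_{n-1}}))$ is zero. Therefore the image of $\beta$ in $k_2$ of the new residue field is zero, so the image of $(\tilde\beta)$ in $\br(K')_0/\br(K')_1$ is zero — which is exactly the statement that $(\alpha - (\pi,u)) \otimes K' \in \br(K')_1$.

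**Anticipated obstacle.** The one point that needs care is the compatibility of Kato's $\rho_0$ with the unramified base change $K \to K'$, and in particular that $U_1^{K'}$ restricts correctly so that $\br(K')_1$ is generated the expected way and the diagram above genuinely commutes. This is essentially functoriality of Kato's symbol map (it follows from the explicit formula $(x,y) \mapsto (\tilde x, \tilde y)$ together with the fact that a lift over $R$ can be used as a lift over the valuation ring of $K'$), but it should be invoked carefully, citing [K] or [CT]. A secondary subtlety is that replacing the $p$-th power part of $z$ by a unit lift $u$ is harmless: changing $u$ by a $p$-th power in $R^*$ changes $(\pi, u)$ by a class in $\br(K)_1$ (indeed the trivial class, since it becomes split), so the conclusion $(\alpha - (\pi,u))\otimes K' \in \br(K')_1$ is unaffected. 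Everything else is bookkeeping with the filtration.
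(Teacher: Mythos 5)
Your argument is the same as the paper's: express the class of $\alpha$ in $\br(K)_0/\br(K)_1$ via the surjectivity of $\rho_0$, absorb the $\kappa^*/\kappa^{*p}$-component into $(\pi,u)$, and then use functoriality of $\rho_0$ together with Lemma 1.4 (vanishing of $k_2(\kappa)\to k_2(\kappa(\sqrt[p]{a_1},\dots,\sqrt[p]{a_{n-1}}))$) to conclude. The only blemish is notational --- you write $\kappa'$ for the field extension of $K$ where you mean $K'$ --- but the proof is correct and matches the paper's.
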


\begin{proof}  Since  $\rho_0$ is surjective, there exists $x_i,
y_i, a \in \kappa^*$ such that $\rho_0( \sum_i (x_i, y_i) - (a) )
= \alpha  \in \br(K)_0/ \br(K)_1$.  In particular,  if $u$ is a lift of $a$ in $R$, 
$$ \rho_0 (  \sum_i(x_i,  y_i)) = \alpha - (  \pi, u) \in \br(K)_0/\br(K)_1.$$
Let $K' = K(\sqrt[p]{u_1}, \cdots , \sqrt[p]{u_n-1})$. Then $K'$ is also a complete discretely 
valued field with residue field $\kappa' = \kappa(\sqrt[p]{a_1}, \cdots ,
\sqrt[p]{a_{n-1}})$.  
By the functoriality of the map $\rho_0$,  
we have  
$\rho_0(\sum(x_i, y_i) )  =  \alpha - ( \pi, u)  \in  \br(K')_0/\br(K')_1$. 
By (1.4), the image of $\sum_i(x_i, y_i) $ is zero in $k_2(\kappa')$. Hence
$\alpha - (\pi, u)  = \rho_0(\sum(x_i, y_i) )  = 0 \in \br(K')_0/\br(K')_1.$ In particular, 
$\alpha - (\pi, u)  \in \br(K')_1$.  
\end{proof}

\begin{prop} Let $R$, $K$, $\kappa$ and $\pi$ be as above. 
Suppose  that $\kappa = \kappa^p(a_1, \cdots ,a_n)$ for some $a_1,
\cdots , a_n  \in \kappa$.  Let $u_1, \cdots , u_n \in R$ be lifts of
$a_1, \cdots ,  a_n$.  Let $\alpha \in \br(K)_1$.  
Then  there exist $\lambda,  \lambda_1, \cdots , \lambda_n \in R^*$ such that 
$$ \alpha = (\lambda_1, u_1) + \cdots  +  (\lambda_n, u_n)  +  (\pi, \lambda).$$ 
\end{prop}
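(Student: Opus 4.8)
The plan is to analyze the filtration $\br(K)_i$ from the top down, using the surjections $\rho_i : \Omega^1_\kappa \oplus \kappa \to \br(K)_i/\br(K)_{i+1}$ and the fact that $\br(K)_n = 0$ for $n > N$. First I would show the following local statement: if $\beta \in \br(K)_i$ for some $i \geq 1$, then modulo $\br(K)_{i+1}$ we can write $\beta$ in terms of symbols $(\lambda_j, u_j)$ and $(\pi, \lambda)$. To see this, note that $\rho_i(x\,\tfrac{dy}{y}) = (1 + \tilde x\pi^i, \tilde y)$, so the first summand $\Omega^1_\kappa$ contributes classes $(u, v)$ with $u \in U_i$; since $\kappa = \kappa^p(a_1,\dots,a_n)$, the $p$-basis argument (as in the proof of (1.3)/(1.5)) lets me rewrite $dy/y$ in $\Omega^1_\kappa$ as a $\kappa$-linear combination of $da_1,\dots,da_n$, hence $\rho_i$ factors through classes of the form $\sum_j (1 + \tilde x_j\pi^i, u_j) = \sum_j(\mu_j, u_j)$ with $\mu_j \in U_i \subseteq R^*$; and the second summand $\kappa \to \br(K)_i/\br(K)_{i+1}$, $z \mapsto (\pi, 1+\tilde z\pi^i)$, already has the form $(\pi, \lambda)$ with $\lambda \in U_i$.

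Next I would run a descending induction to globalize this. Start with $\alpha \in \br(K)_1$. If $\alpha \in \br(K)_N \subseteq \br(K)_{\lceil N\rceil}$ we may have already reached $0$; in general, for each $i$ from $1$ up to $\lfloor N\rfloor$, the previous paragraph gives $\lambda^{(i)}, \lambda_1^{(i)},\dots,\lambda_n^{(i)} \in R^*$ with $\alpha \equiv \sum_j(\lambda_j^{(i)}, u_j) + (\pi,\lambda^{(i)}) \pmod{\br(K)_{i+1}}$; subtract this off and repeat with the new class in $\br(K)_{i+1}$. After finitely many steps the remaining class lies in $\br(K)_{N+1} = 0$. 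Finally, collect all the contributions using bimultiplicativity of the symbol: $\sum_i \sum_j (\lambda_j^{(i)}, u_j) = \sum_j \bigl(\prod_i \lambda_j^{(i)}, u_j\bigr)$ and $\sum_i (\pi, \lambda^{(i)}) = (\pi, \prod_i \lambda^{(i)})$, which yields the desired expression $\alpha = (\lambda_1, u_1) + \cdots + (\lambda_n, u_n) + (\pi,\lambda)$ with $\lambda_j = \prod_i \lambda_j^{(i)}$ and $\lambda = \prod_i\lambda^{(i)}$, all units in $R^*$.

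The main obstacle I anticipate is the first step — justifying that the $\Omega^1_\kappa$-part of $\rho_i$ can be pushed into the span of $(U_i, u_j)$-symbols. The subtlety is that $\rho_i$ is only defined on the quotient $\br(K)_i/\br(K)_{i+1}$, so I must be careful that rewriting $x\,\tfrac{dy}{y} = \sum_j x_j'\,da_j$ in $\Omega^1_\kappa$ (using that $a_1,\dots,a_n$ generate over $\kappa^p$, possibly after absorbing the difference into the $\kappa$-summand or into lower-filtration terms) corresponds correctly under $\rho_i$ — in particular, $da_j = a_j \cdot \tfrac{da_j}{a_j}$ introduces the lift $u_j$ of $a_j$, and I need $\rho_i\bigl(\sum_j (x_j'/a_j)\tfrac{da_j}{a_j}\bigr)$ to literally be $\sum_j(1+\widetilde{(x_j'/a_j)}\pi^i, u_j)$ modulo $\br(K)_{i+1}$, which requires checking that changing the lift of a residue class only alters a symbol by an element of $\br(K)_{i+1}$ (standard for this filtration). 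A secondary point to watch is the boundary behavior near $i = N$: when $p \mid N$ or $K$ ramifies, the map $\rho_{\lfloor N\rfloor}$ may have a kernel and $\br(K)_i$ need not be exactly the image, but for surjectivity of $\rho_i$ — which is all we use — this causes no trouble.
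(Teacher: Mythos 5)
Your proof is correct and follows essentially the same route as the paper: an induction up the filtration $\br(K)_1 \supset \br(K)_2 \supset \cdots$, using the surjectivity of $\rho_i$ together with the fact that $\frac{da_1}{a_1},\dots,\frac{da_n}{a_n}$ span $\Omega^1_\kappa$ to realize each graded piece by symbols $(1+\tilde b_j\pi^i, u_j)$ and $(\pi, 1+\tilde a\pi^i)$, then collecting the unit factors via bimultiplicativity once the filtration terminates at $\br(K)_{N+1}=0$. The only cosmetic difference is that the paper accumulates the products $x_{(i+1)j}=x_{ij}(1+\tilde b_j\pi^{i+1})$ at each step rather than at the end, and the well-definedness issues you flag are exactly what the cited theorem of Kato supplies.
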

 
\begin{proof}  Let $\alpha \in \br(K)_1$.
First we show, by induction on $i$,  that for each  $i \geq 0$,  
there exist $x_{i1},  \cdots  , x_{in}, x_i \in R^*$ 
such that  $\alpha - ( x_{i1}, u_1)  - \cdots (x_{i n}, u_n) - (\pi, x_i ) \in \br(K)_{i+1}$. 
 Since $\alpha \in \br(K)_1$, we take $x_{0j} = x_0  =1$, $1 \leq j
 \leq n$.  
Suppose that $i \geq 1$ and  there exist $x_{i1},  \cdots  , x_{in}, x_i \in R^*$ 
such that  $\alpha - ( x_{i1}, u_1)  - \cdots ( x_{in}, u_n) - ( \pi, x_i) \in \br(K)_{i+1}$.
Since the homomorphism  $\rho_{i+1}  :  \Omega^1_\kappa  \oplus \kappa
\to \br(K)_{i+1}/ \br(K)_{i+2}$
 is surjective, there exist $w \in \Omega^1_{\kappa}$ and $a \in \kappa$ such that 
 $$\rho_{i+1}(w ,  a) = \alpha - ( x_{i1}, u_1)  - \cdots (x_{in}, u_n)
 - (\pi, x_i ) \in  \br(K)_{i+1}/\br(K)_{i+2}.$$
 Since $\kappa = \kappa^p(a_1, \cdots ,a_n)$,
$\Omega_\kappa$ is generated by $\frac{da_i}{a_i}$, $1 \leq i \leq n$ and   hence
$w = \sum_i b_i \frac{da_i}{a_i}$ for some $b_i \in \kappa$.
Thus   
$$ \rho_{i+1}(w ,  a) = (1 + \tilde{b}_1\pi^{i+1}, u_1) + \cdots (1 +
\tilde{b}_n\pi^{i+1}, u_n) +  (\pi, 1 + \tilde{a}\pi^{i+1}).$$
In particular,  
$\alpha - (x_{i1}, u_1)  - \cdots -  (x_{in}, u_n) - (\pi, x_i )   - (1 + \tilde{b}_1\pi^{i+1}, u_1) -
 \cdots -(1 + \tilde{b}_n\pi^{i+1}, u_n) -  (\pi, 1 + \tilde{a}\pi^{i+1})  \in \br(K)_{i+2}.$
Let $x_{(i+1)j} =  x_{ij}(1 + \tilde{b}_j\pi^{i+1})$ for $ 1 \leq j
\leq n$ and $x_{i+1} =  x_i(1 +  \tilde{a}\pi^{i+1})$. 
Since $(x, yz) = (x, y)  + (x, z) \in \Brp(K)$, we have
$\alpha - (x_{(i+1)1}, u_1) -  \cdots -(x_{(i+1)n}, u_n) - (\pi, x_{i+1}) \in \br(K)_{i+2}$. 
 
Since $\br(K)_{i} = 0$ for $i > N$, we have $\alpha = (x_{(N+1)1},
u_1) +  \cdots +  (x_{(N+1)n}, u_n) + (\pi, x_{N+1})$.
 \end{proof}

\begin{cor}  Let $K$ and $\kappa$ be as above. 
Suppose that the $p$-rank of $\kappa$ is $n$. Let $D$ be a central simple algebra 
over $K$ of period $p$.  If $D$ represents an element in $\br(K)_1$, then ind$(D)$ divides $p^{n+1}$.
\end{cor}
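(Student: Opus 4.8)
The plan is to combine Proposition 2.2 with the elementary bound on indices of symbol algebras. By Proposition 2.2, since the $p$-rank of $\kappa$ is $n$, we may choose $a_1, \dots, a_n \in \kappa$ forming a $p$-basis, lift them to $u_1, \dots, u_n \in R^*$, and write the class of $D$ in $\Brp(K)$ as
$$[D] = (\lambda_1, u_1) + \cdots + (\lambda_n, u_n) + (\pi, \lambda)$$
for suitable $\lambda, \lambda_1, \dots, \lambda_n \in R^*$. This exhibits $D$ as Brauer-equivalent to a tensor product of $n+1$ cyclic algebras of degree $p$.

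Next I would split each of the cyclic factors $(\lambda_j, u_j)$ by passing to the extension $K(\sqrt[p]{u_j})$, and split the factor $(\pi, \lambda)$ by passing to $K(\sqrt[p]{\pi})$. Concretely, set $L = K(\sqrt[p]{u_1}, \dots, \sqrt[p]{u_n}, \sqrt[p]{\pi})$, which is a finite extension of $K$ of degree dividing $p^{n+1}$. Over $L$ every symbol $(\lambda_j, u_j)$ becomes trivial because $u_j$ is a $p$-th power there, and $(\pi, \lambda)$ becomes trivial because $\pi$ is a $p$-th power there; hence $D \otimes_K L$ is a matrix algebra. Since the index of $D$ divides $[L : K]$, which divides $p^{n+1}$, we conclude $\mathrm{ind}(D) \mid p^{n+1}$.

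There is one point to handle with care: Proposition 2.2 is stated under the standing hypotheses of Section 2, namely that $K$ contains a primitive $p$-th root of unity (needed to define the symbols $(a,b)$ and to invoke Kato's description). The Corollary is stated for $K$ as in the section, so I take this hypothesis to be in force; if one wanted the statement without the root-of-unity assumption, one would first adjoin $\zeta_p$, an extension of degree dividing $p-1$ which is prime to $p$, and observe that such an extension changes neither the index's $p$-part nor the $p$-rank of the residue field — but as stated no such reduction is needed. A second minor point: the $a_i$ in Proposition 2.2 need only satisfy $\kappa = \kappa^p(a_1, \dots, a_n)$, so taking an actual $p$-basis (which exists since the $p$-rank is $n$) is legitimate.

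The main obstacle is essentially already dispatched by Proposition 2.2 — the real content is the decomposition of an arbitrary class in $\br(K)_1$ into exactly $n+1$ symbols of a controlled shape, with $n$ of them "pointing along" the fixed lifts $u_1, \dots, u_n$ and one "pointing along" $\pi$. Given that, the corollary is just the observation that each such symbol is split by a degree-$p$ radical extension and these can be amalgamated into a single extension of degree dividing $p^{n+1}$. So in the write-up I would spend essentially all the words citing Proposition 2.2 and computing $[L:K]$, and none on anything deeper.
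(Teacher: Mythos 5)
Your proof is correct and is essentially identical to the paper's: it invokes Proposition 2.2 to write the class of $D$ as $(\lambda_1,u_1)+\cdots+(\lambda_n,u_n)+(\pi,\lambda)$ and then splits it over $K(\sqrt[p]{u_1},\dots,\sqrt[p]{u_n},\sqrt[p]{\pi})$, an extension of degree dividing $p^{n+1}$. Your side remarks about the primitive $p$-th root of unity and the choice of a $p$-basis are accurate but not needed beyond what the paper already assumes.
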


\begin{proof} Let $\alpha \in \br(K)_1$ be the class of $D$.  By (2.2),
there exist $\lambda, \lambda_1,  \cdots, \lambda_n \in R^*$ such that 
$\alpha = (\lambda_1, u_1) + \cdots + (\lambda_n, u_n) + (\pi, \lambda)$. 
Hence $\alpha \otimes K(\sqrt[p]{u_1}, \cdots \sqrt[p]{u_n}, \sqrt[p]{\pi}) = 0$
and the index of $D$ divides $p^{n+1}$. 
\end{proof}

\begin{theorem}  Let  $K$ be a complete discretely valued field with residue field $\kappa$.
Let $R$ be the valuation ring of  $K$ and $\pi \in R$ be a parameter.  
Suppose that char$(K) = 0$, char$(\kappa) = p$  and   
the   $p$-rank of $\kappa$ is  $n$.  Let $a_1, \cdots , a_n \in \kappa$
be such that $\kappa = \kappa^p(a_1, \cdots , a_n)$ and $u_1, \cdots , u_n \in R$ be lifts of 
$a_1, \cdots , a_n$. Let $D$ be a central simple algebra  over $K$ of period $p$.
If $n = 0$, then  $D \otimes K(\sqrt[p]{\pi})$ is  a matrix algebra and if $n \geq 1$, then 
$D \otimes K(\sqrt[p^2]{u_1}, \cdots \sqrt[p^2]{u_{n-1}},  \sqrt[p]{u_n}, \sqrt[p]{\pi})$  is a matrix algebra.
\end{theorem}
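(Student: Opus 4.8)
The plan is to treat separately the cases $n=0$ and $n\ge 1$, working throughout over a field containing a primitive $p$-th root of unity, as assumed in this section. When $n=0$ the residue field $\kappa$ is perfect, so $\Omega^1_\kappa=0$, hence $\Omega^2_\kappa=0$ and, $h^2_p$ being injective, $k_2(\kappa)=0$; moreover Frobenius is surjective on $\kappa^*$, so $\kappa^*/\kappa^{*p}=1$. By the isomorphism $\rho_0$ this gives $\br(K)_0=\br(K)_1$, so $[D]\in\br(K)_1$ (we may assume $\mathrm{per}(D)=p$), and Proposition 2.2 with $n=0$ yields $[D]=(\pi,\lambda)$ for some $\lambda\in R^*$; this symbol is killed by $K(\sqrt[p]{\pi})$. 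That settles $n=0$.

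For $n\ge 1$ the first step is to apply Lemma 2.1 to $\alpha:=[D]\in\Brp(K)$: there is a unit $u\in R^*$ with
$$\beta:=\bigl(\alpha-(\pi,u)\bigr)\otimes_K K'\in\br(K')_1,\qquad K':=K(\sqrt[p]{u_1},\dots,\sqrt[p]{u_{n-1}}).$$
The next step is to observe that $K'$ is again a complete discretely valued field to which Proposition 2.2 applies. Since $a_1,\dots,a_{n-1}$ are $p$-independent, $[K':K]=p^{n-1}=[\kappa':\kappa]$ for the residue field $\kappa'=\kappa(\sqrt[p]{a_1},\dots,\sqrt[p]{a_{n-1}})$, so $K'/K$ has ramification index one and $\pi$ is still a parameter of $K'$; a short degree computation, using $(\kappa')^p=\kappa^p(a_1,\dots,a_{n-1})$, gives $[\kappa':(\kappa')^p]=p^n$, so $\kappa'$ has $p$-rank $n$, with $p$-basis $\{\sqrt[p]{a_1},\dots,\sqrt[p]{a_{n-1}},a_n\}$ and unit lifts $\sqrt[p]{u_1},\dots,\sqrt[p]{u_{n-1}},u_n$ in the valuation ring $R'$ of $K'$. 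Proposition 2.2 over $K'$ then produces units $\lambda',\lambda'_1,\dots,\lambda'_n$ of $R'$ with
$$\beta=(\lambda'_1,\sqrt[p]{u_1})+\dots+(\lambda'_{n-1},\sqrt[p]{u_{n-1}})+(\lambda'_n,u_n)+(\pi,\lambda'),$$
and hence, using $\alpha\otimes_K K'=\beta+(\pi,u)$ and bilinearity of the symbol,
$$\alpha\otimes_K K'=(\lambda'_1,\sqrt[p]{u_1})+\dots+(\lambda'_{n-1},\sqrt[p]{u_{n-1}})+(\lambda'_n,u_n)+(\pi,\lambda'u).$$

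To conclude, set $M=K(\sqrt[p^2]{u_1},\dots,\sqrt[p^2]{u_{n-1}},\sqrt[p]{u_n},\sqrt[p]{\pi})$, which contains $K'$ because $\sqrt[p]{u_i}=(\sqrt[p^2]{u_i})^p$. Over $M$ each $\sqrt[p]{u_i}$ with $i<n$, as well as $u_n$ and $\pi$, is a $p$-th power, so every symbol in the last displayed expression becomes trivial; hence $\alpha\otimes_K M=0$, i.e.\ $D\otimes_K M$ is a matrix algebra, which is the assertion. As $[M:K]$ divides $p^{2n}$, this also recovers Br$_p$dim$(K)\le 2n$ for $n\ge1$, and $\le 1$ for $n=0$, via Lemma 1.1.

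The step I expect to demand the most care is the passage to $K'$: one must confirm that adjoining $\sqrt[p]{u_1},\dots,\sqrt[p]{u_{n-1}}$ leaves the $p$-rank of the residue field equal to $n$ and yields a $p$-basis that is still lifted by units, for only then is Proposition 2.2 available over $K'$. Once that is in hand, the value $2n$ of the bound is transparent: Proposition 2.2 over $K'$ returns symbols of the form $(\ast,\sqrt[p]{u_i})$, so a single further layer of radicals, namely $\sqrt[p^2]{u_i}$, suffices to split them. If one prefers not to assume $\zeta_p\in K$, one first adjoins $\zeta_p$ — a prime-to-$p$ extension leaving $\kappa$, its $p$-rank and the unit lifts $u_i$ unchanged — and then descends the resulting index bound using that the index of a period-$p$ algebra is prime to $[K(\zeta_p):K]$.
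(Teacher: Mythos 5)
Your argument is correct and follows the paper's own proof essentially step for step: the $n=0$ case via $\rho_0$ and Proposition 2.2, and the $n\ge 1$ case by first applying Lemma 2.1 over $K'=K(\sqrt[p]{u_1},\dots,\sqrt[p]{u_{n-1}})$ and then Proposition 2.2 over $K'$ with the $p$-basis $\{\sqrt[p]{a_1},\dots,\sqrt[p]{a_{n-1}},a_n\}$, which is exactly the paper's route. Your added verifications (that $K'/K$ is unramified so $\pi$ remains a parameter, and that $\kappa'$ has $p$-rank $n$) only make explicit what the paper asserts in passing.
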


\begin{proof}      Let $\zeta $ be a primitive $p^{\rm th}$ root of unity and $K' = K(\zeta)$.  Since 
$[K' : K]$ is coprime to $p$, ind$(D) = $ ind$(D \otimes K')$.  Since
$K'$ is finite extension of a 
complete discretely valued field $K$, $K'$ is also a complete discrete
valued field with 
residue field $\kappa'$ a 
finite extension of  $\kappa$.   In particular, $p$-rank$(\kappa') = p$-rank$(\kappa)$. 
Thus, by replacing, $K$ by $K'$, we assume that $K$ contains a primtiive $p^{\rm th}$ root of unity. 
Let $\alpha \in \Brp(K)$ be the class of $D$. 

Suppose $n = 0$.  Then $\kappa = \kappa^p$ and $k_2(\kappa) = 0$. 
Since   $\rho_0
: k_2(\kappa) \oplus \kappa^*/\kappa^{*p} \to \br(K)_0/\br(K)_1$ is an isomorphism, 
 $\Brp(K) = \br(K)_1$.    Thus, by (2.2),   $\alpha = ( \pi, u)$.
 In particular $D \otimes K(\sqrt[p]{\pi})$ is a matrix algebra.  

Suppose that  $n \geq 1$.  Since $p$-rank$(\kappa) = n$, there exist 
  $a_1, \cdots , a_n \in \kappa^*$  such that 
$\kappa = \kappa^p(a_1, \cdots , a_n)$. 
Let  $u_1, \cdots , u_n \in R$ be lifts of $a_1, \cdots , a_n$ and
$\pi \in R$ a parameter. 
Let $K_1 = K(\sqrt[p]{u_1}, \cdots ,\sqrt[p]{u_{n-1}})$.  
Then  $K_1$ is also a complete discrete
valued field   with  residue field $\kappa_1
= \kappa(\sqrt[p]{a_1}, \cdots , \sqrt[p]{a_{n-1}})$. 
Let $R_1$ be the valuation ring of  $K_1$. Then  
$\pi$ is a parameter in $R_1$.     By (2.1),  there exists $u \in R^*$ such that 
$(\alpha - ( \pi, u) ) \otimes K_1 \in
\br(K_1)_1$.   Since  $ \kappa_1^p =  \kappa^p(a_1, \cdots ,
a_{n-1})$, we have  $\kappa_1 = \kappa_1^p(\sqrt[p]{a_1}, \cdots, \sqrt[p]{a_{n-1}}, a_n)$.  
Since $\alpha - (\pi,  u) \in \br(K_1)_1$, 
by (2.2), there exist  $\lambda_1, \cdots , \lambda_n, \lambda \in R_1$ such that 
$$
\alpha - (\pi, u)    =  (\lambda_1, \sqrt[p ]{u_1}) + \cdots + (\lambda_{n-1}, \sqrt[p ]{u_{n-1}}) + 
(\lambda_n, u_n) + (\pi, \lambda). 
$$
Hence 
$$\alpha  =   (\lambda_1, \sqrt[p]{u_1}) + \cdots + (\lambda_{n-1}, \sqrt[p]{u_{n-1}}) + 
(\lambda_n,  u_n) + (\pi, u\lambda).
$$ 
In particular $D \otimes K(\sqrt[p^2]{u_1}, \cdots
\sqrt[p^2]{u_{n-1}},  \sqrt[p]{u_n}, \sqrt[p]{\pi})$   is a matrix algebra.
\end{proof} 
 
\begin{cor} Let $K$, $\kappa$  and $n$ be as in (2.4).  
Then  Br$_p$dim$(K)$ is 1 if $ n = 0$ and Br$_p$dim$(K) \leq  2n$ if $n \geq 1$.
\end{cor}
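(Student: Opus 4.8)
The plan is to deduce the bound on $\mathrm{Br}_p\mathrm{dim}(K)$ directly from Theorem 2.4, using Lemma 1.1 to reduce to period-$p$ algebras. First I would invoke Lemma 1.1: it suffices to show that for every finite extension $K'$ of $K$ and every central simple algebra $D$ over $K'$ of period $p$, $\mathrm{ind}(D)$ divides $p$ (if $n=0$) or $p^{2n}$ (if $n\geq 1$). The key observation is that any finite extension $K'$ of a complete discretely valued field $K$ is again complete discretely valued, with residue field $\kappa'$ a finite extension of $\kappa$; and by (the argument already used in the proof of 2.4, citing [B], A.V.135) a finite extension does not change the $p$-rank, so the $p$-rank of $\kappa'$ is again $n$. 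Hence it is enough to prove the index bound for $D$ a period-$p$ algebra over $K$ itself.

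For $n=0$: Theorem 2.4 gives that $D\otimes_K K(\sqrt[p]{\pi})$ is a matrix algebra, so $\mathrm{ind}(D)$ divides $[K(\sqrt[p]{\pi}):K]=p$. Since $p$-rank $0$ means $\kappa$ is perfect but $\kappa$ need not be algebraically closed, there do exist nontrivial period-$p$ algebras over $K$ (e.g. $(\pi,u)$ when $\kappa^*/\kappa^{*p}\ne 1$ is impossible since $\kappa$ perfect, but ramified examples via $\br(K)_1$ can be nonzero), so the bound is exactly $1$; in any case $\mathrm{Br}_p\mathrm{dim}(K)\le 1$, and one notes it is not $0$ whenever $\Brp(K)\ne 0$. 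For $n\geq 1$: Theorem 2.4 gives that $D$ is split by the field $L=K(\sqrt[p^2]{u_1},\dots,\sqrt[p^2]{u_{n-1}},\sqrt[p]{u_n},\sqrt[p]{\pi})$. Each of $\sqrt[p^2]{u_i}$ generates an extension of degree dividing $p^2$, $\sqrt[p]{u_n}$ one of degree dividing $p$, and $\sqrt[p]{\pi}$ one of degree dividing $p$, so $[L:K]$ divides $(p^2)^{n-1}\cdot p\cdot p = p^{2n}$. Therefore $\mathrm{ind}(D)$ divides $p^{2n}$. By Lemma 1.1, $\mathrm{Br}_p\mathrm{dim}(K)\le 2n$.

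The only mild subtlety — and the step I would be most careful about — is the reduction via finite extensions: one must check that the hypotheses of Theorem 2.4 (completeness, characteristic $0$ for $K'$, characteristic $p$ for $\kappa'$, $p$-rank $n$ for $\kappa'$) all persist under a finite extension $K'/K$, which follows from the standard fact that finite extensions of complete discretely valued fields are complete discretely valued together with the invariance of $p$-rank under finite (indeed arbitrary algebraic) extensions already cited in the excerpt. No further computation is needed; this is purely a matter of assembling Theorem 2.4 with Lemma 1.1.
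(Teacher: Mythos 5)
Your proof is correct and follows essentially the same route as the paper: reduce to period-$p$ algebras over finite extensions via Lemma 1.1, note that a finite extension of $K$ is again complete discretely valued with residue field of the same $p$-rank, and read off the index bound from the degree of the splitting field produced by Theorem 2.4 ($p$ if $n=0$, and $(p^2)^{n-1}\cdot p\cdot p=p^{2n}$ if $n\geq 1$). The only difference is your (somewhat muddled) aside on whether the bound is exactly $1$ when $n=0$; the paper does not establish exactness either, and indeed only $\mathrm{Br}_p\mathrm{dim}(K)\leq 1$ is proved (compare Theorem 2.7), so that aside can simply be dropped.
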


\begin{proof} Let $K'$ be a finite extension of $K$.  Let $D$ be a central simple algebra
over $K'$ of period $p$.   
Since $K'$ is also a complete discretely valued field with the $p$-rank of the residue field equal to $n$,
corollary follows by (2.4) and (1.6). 
\end{proof}

\begin{lemma} Let $K$ be a complete discretely valued field
with residue field $\kappa$. Suppose that char$(K) = 0$,
char$(\kappa) = p>0$ and $[\kappa : \kappa^p] \geq 2n$.  Then
Br$_p$dim$(K) \geq n$.
\end{lemma}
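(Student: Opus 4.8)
We may assume $n\ge 1$, the case $n=0$ being trivial. The plan is to construct, over a finite extension of $K$, a central simple algebra $D$ with $\mathrm{per}(D)=p$ and $\mathrm{ind}(D)\ge p^n$; since $p^n\nmid p^{n-1}=\mathrm{per}(D)^{n-1}$, this forces $\mathrm{Br}_p\mathrm{dim}(K)\ge n$. First, exactly as in the proof of (2.4), I would replace $K$ by $K(\zeta)$ for a primitive $p$-th root of unity $\zeta$ and so assume $\zeta\in K$: the degree $[K(\zeta):K]$ is prime to $p$, the residue field of $K(\zeta)$ is a finite extension of $\kappa$ and hence has the same $p$-rank, and since $K(\zeta)/K$ is finite an algebra over $K(\zeta)$ of period $p$ and index $\ge p^n$ already witnesses $\mathrm{Br}_p\mathrm{dim}(K)\ge n$.

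Let $R$ be the valuation ring of $K$ and $\pi$ a parameter. I would choose $p$-independent elements $a_1,\dots,a_{2n}\in\kappa^*$ (possible by hypothesis), lifts $u_1,\dots,u_{2n}\in R^*$, and take $D$ to be a central simple algebra with class
\[ \alpha\;=\;(u_1,u_2)+(u_3,u_4)+\cdots+(u_{2n-1},u_{2n})\ \in\ \Brp(K). \]
As a tensor product of $n$ symbol algebras, $D$ has degree $p^n$, so $\mathrm{ind}(D)\mid p^n$. Put $\beta=(a_1,a_2)+\cdots+(a_{2n-1},a_{2n})\in k_2(\kappa)$. Under Kato's isomorphism $\rho_0: k_2(\kappa)\oplus\kappa^*/\kappa^{*p}\isom\br(K)_0/\br(K)_1$, the class of $\alpha$ is $\rho_0(\beta,0)$, the $\kappa^*/\kappa^{*p}$-coordinate being $0$ because $\alpha$ is a sum of symbols of units. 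Since $h^2_p(\beta)=\sum_{i=1}^n(a_{2i-1}a_{2i})^{-1}\,da_{2i-1}\wedge da_{2i}$ is nonzero in $\Omega^2_\kappa$ by (1.6), applied with the trivial extension ($1<p^n$), and $h^2_p$ is injective, $\beta\ne 0$; hence $\alpha\ne 0$ and $\mathrm{per}(D)=p$.

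The key step is to show that $D\otimes_K L$ is nonsplit for every finite extension $L/K$ with $[L:K]<p^n$. Such an $L$ is again a complete discretely valued field, with residue field $\kappa'$ a finite extension of $\kappa$ satisfying $[\kappa':\kappa]\le[L:K]<p^n$. By functoriality of $\rho_0$ and of $h^2_p$, the class of $\alpha\otimes_K L$ in $\br(L)_0/\br(L)_1$ is $\rho_0^L(\beta',0)$, where $\beta'\in k_2(\kappa')$ is the image of $\beta$ and $h^2_p(\beta')$ is the image of $h^2_p(\beta)$ in $\Omega^2_{\kappa'}$; by (1.6) this image is nonzero (now $[\kappa':\kappa]<p^n$), so $\beta'\ne 0$, and as $\rho_0^L$ is injective, $\alpha\otimes_K L\ne 0$. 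Thus $D\otimes_K L$ is nonsplit. It follows that $\mathrm{ind}(D)$ does not divide $p^{n-1}$, for otherwise $D$ would be split by a maximal subfield of its underlying division algebra, an extension of $K$ of degree $\mathrm{ind}(D)\le p^{n-1}<p^n$. Hence $\mathrm{ind}(D)=p^n\nmid p^{n-1}=\mathrm{per}(D)^{n-1}$, giving $\mathrm{Br}_p\mathrm{dim}(K)\ge n$.

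The main obstacle I anticipate is the functoriality of Kato's isomorphism $\rho_0$ under an arbitrary finite — possibly ramified, possibly inseparable — extension $L/K$, and in particular the assertion that the class being tracked stays in the $k_2$-summand. This is forced here by the fact that $\alpha$ is built from symbols of units only and hence carries no $(\pi,-)$-contribution that could leak into the $\kappa'^*/\kappa'^{*p}$-summand once $\pi$ is no longer a uniformizer of $L$; but it should be pinned down carefully against [K]/[CT]. The remaining ingredients — (1.6), injectivity of $h^2_p$, and the fact that an algebra of $p$-power index is split by a field extension of that degree — are routine.
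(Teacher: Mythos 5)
Your proposal is correct and follows essentially the same route as the paper: the same algebra $D=(u_1,u_2)+\cdots+(u_{2n-1},u_{2n})$ built from lifts of a $p$-independent family, nontriviality over any extension of degree $<p^n$ via Lemma (1.6), the injectivity of $h^2_p$, and the injectivity of Kato's map $\rho_0$. The extra care you take (the reduction to $\zeta\in K$, the coefficients $(a_{2i-1}a_{2i})^{-1}$ in $h^2_p(\beta)$, and the remark that the class stays in the $k_2$-summand because all entries are units) only makes explicit points the paper leaves implicit.
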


\begin{proof} Let $a_1, \cdots , a_{2n} \in \kappa^*$ be
$p$-independent. Let  $u_1, \cdots , u_{2n} \in K^*$ be the lifts of
$a_1, \cdots , a_{2n}$.  Let $D = (u_1,  u_2) + \cdots + 
(u_{2n-1}, u_{2n})$. We claim that ind$(D) = p^{n}$. This would show
that Br$_p$dim$(K) \geq n$. 
 
Let $K_1$ be an   extension   of $K$ of degree at most  $p^{n-1}$.
 Since $K$ is a complete discretely valued field, $K_1$ is
also a complete discretely valued field with  residue field $\kappa_1$
and   $[\kappa_1 : \kappa] \leq [L : K] \leq p^{n-1}$.  Then, by (
1.6), the image of  $da_1 \wedge  da_2 +  \cdots +
da_{2n-1}\wedge da_{2n} $ in $ \Omega^2_{\kappa_1}$ is non-zero.  Since
 $h^2_p((a_1, a_2) + \cdots + (a_{2n-1}, a_{2n})) = da_1 \wedge  da_2 +
\cdots +
da_{2n-1}\wedge da_{2n} $ is nonzero in   $ \Omega^2_{\kappa_1}$,
$(a_1, a_2) + \cdots + (a_{2n-1}, a_{2n})$ is non-zero in
$k_2(\kappa_1)$.   Since $\rho_0((a_1, a_2) + \cdots + (a_{2n-1},
a_{2n}))$ is the class of $D \otimes_K K_1$ in $\br(K_1)_0/\br(K)_1$
and $\rho_0$ is injective,   $D
\otimes_K K_1$ is non-trivial in $\Brp(K_1)$. Hence ind$(D) \geq p^n$. Since
$D$ is a product of $n$ cyclic algebras, ind$(D) = p^n$.  
\end{proof}

Combining (2.4) and (2.6), we have the following

\begin{theorem} Let $K$ be a complete discretely valued field with residue field $\kappa$.
Suppose that char$(K) = 0$, char$(\kappa) = p > 0$ and  the $p$-rank of $\kappa$ is $n$.
If $n = 0$, then $Br_pdim(K) \leq 1$ and if $n \geq 1$, then $\frac{n}{2} \leq Br_pdim(K) \leq 2n$. 
 \end{theorem}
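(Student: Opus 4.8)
The plan is to obtain the theorem by assembling the two results that immediately precede it: Corollary (2.5) supplies the upper bounds and Lemma (2.6) supplies the lower bound, so all that remains is to check that the $p$-rank hypothesis feeds correctly into each and then combine them.

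For the upper bound, recall that by definition Br$_p$dim$(K)$ is computed from central simple algebras of $p$-power period over all finite extensions $K'$ of $K$. Each such $K'$ is again a complete discretely valued field, its residue field $\kappa'$ is a finite extension of $\kappa$, and $[\kappa' : \kappa'^p] = [\kappa : \kappa^p] = p^n$ ([B], A.V.135, Corollary 3), so $\kappa'$ still has $p$-rank $n$. Hence the conclusion of Corollary (2.5)---which is phrased exactly for a complete discretely valued field whose residue field has $p$-rank $n$---gives Br$_p$dim$(K) \leq 1$ when $n=0$ and Br$_p$dim$(K) \leq 2n$ when $n \geq 1$. Beyond invoking (2.5) there is nothing to do here; the content is in Theorem (2.4), which rests on Kato's description of $\Brp(K)$ through the filtration $\br(K)_{\bullet}$ and the maps $\rho_i$, together with the successive approximation of Proposition (2.2).

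For the lower bound, assume $n \geq 1$. Since the $p$-rank of $\kappa$ is $n$ we have $[\kappa : \kappa^p] = p^n$, so $\kappa$ contains $2\lfloor n/2 \rfloor$ elements $a_1, \dots, a_{2\lfloor n/2\rfloor}$ that are $p$-independent. Feeding these into Lemma (2.6) produces the explicit algebra $D = (u_1, u_2) + \cdots + (u_{2\lfloor n/2\rfloor - 1}, u_{2\lfloor n/2\rfloor})$ of period $p$ whose index equals $p^{\lfloor n/2\rfloor}$: the class of $D$ corresponds under the injective map $\rho_0$ to a sum of symbols in $k_2$ of the residue field, and Lemma (1.6) forces the corresponding element of $\Omega^2$ to remain nonzero in $\Omega^2_{\kappa_1}$ for the residue field $\kappa_1$ of any extension of $K$ of degree $< p^{\lfloor n/2\rfloor}$, so $D$ stays nontrivial after every such base change while being split by a product of $\lfloor n/2\rfloor$ cyclic algebras. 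This yields Br$_p$dim$(K) \geq \lfloor n/2\rfloor$, i.e. the asserted bound $\geq n/2$ (read as $\lfloor n/2\rfloor$ for odd $n$); combining with the previous paragraph proves the theorem.

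The genuine difficulties are all upstream rather than in this assembling step: the upper bound required the analysis of the graded pieces $\br(K)_i/\br(K)_{i+1}$ and the inductive lifting argument of (2.2), while the lower bound required the combinatorial nonvanishing statement (1.6) for $\Omega^2_{\kappa'}$. Here the only points to watch are that finite base extension does not alter the $p$-rank---so that the quantity defining Br$_p$dim$(K)$ is genuinely controlled by (2.5)---and that the $p$-rank $n$ supplies exactly as many $p$-independent elements as the construction in (2.6) needs.
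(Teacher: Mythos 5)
Your proof takes exactly the paper's route: the theorem appears there with the one-line proof ``Combining (2.4) and (2.6)'', which is precisely the assembly you carry out --- the upper bound via (2.4)/(2.5) (using that finite extensions preserve the $p$-rank of the residue field) and the lower bound via (2.6). Your remark that (2.6) literally yields only $\lfloor n/2\rfloor$ when $n$ is odd is a fair and careful reading of what the paper's argument actually establishes.
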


\begin{example} Let $k$ be  a purely  transcendental extension of
the finite field ${\mathbf{F}}_p$ of  transcendence degree 2n
 and $\kappa$   the separable closure of $k$. Let $K$ be a
complete discretely valued field of characteristic 0 with residue field
$\kappa$.  Then  the Brauer $p$-dimension of $\kappa$ is 0 ([A], Ch.IV, \S 7, Theorem
18)  and Br$_p$dim$(K) \geq n$ (2.6).  Note that the $p$-rank of $\kappa$ is $2n$. 
Thus in the mixed characteristic case, the bound on the Brauer dimension
of the  residue field should be replaced by the  bound on the $p$-rank of
the residue field in order to get a good bound  on the Brauer dimension of a
complete discretely valued field. 
\end{example}

\section{The main theorem}

Let $R$ be an integral domain and $K$ its field of fractions.
Let $A$ be a central simple algebra over $K$. We say that $A$ is
{\it unramified  on} $R$ if there exists an Azumaya algebra ${\AA}$
over $R$ such that ${\AA} \otimes_R K$ is Brauer equivalent to
$A$.   If  $P$ is a prime ideal of $R$ and $A$ is unramified on $R_P$,
then we say that $A$ is {\it unramified at} $P$. If $\nu$ is a
discrete valuation of $K$ with $R$ as the valuation ring at $\nu$ and
$A$ is unramified on $R$, then we also say that $A$ is {\it unramified at}
$\nu$.

Let ${\XX}$ be a regular  integral scheme with function field $K$ and $A$ a
central simple algebra over $K$. Let $x \in {\XX}$ be a point. If
$A$ is unramified on the local ring ${\OO}_{{\XX}, x}$ at $x$,
then we say that $A$ is {\it unramified at} $x$.  If $A$ is not unramified
at $x$, then we say that $A$ is {\it ramified at} $x$.   
The {\it ramification divisor} of
$A$ on ${\XX}$ is the divisor $\sum x$,  where sum is taken over
the  codimension one points $x$ of ${\XX}$ with $A$ ramified at
$x$.  The {\it support} of the ramification divisor of $A$ is simply
the union of codimension  one points of ${\XX}$ where $A$ is ramified.

Let $T$ be a complete discrete valuation ring with field of fractions $K$ and 
$t  \in T$ a parameter. 
Let ${\XX}$ be an excellent regular, integral, 
proper scheme over Spec$(T)$ of dimension 2 with function field $F$  
and  reduced special fibre $Y$.  
For a closed point $P$ of ${\XX} $,  let $\OO_{\XX, P}$ denote the local ring at $P$,
$\hat{\OO}_{\XX, P}$  the completion of the regular local ring $\OO_{\XX, P}$ 
at its maximal ideal and $F_P$ the field of
fractions of $\hat{\OO}_{\XX, P}$.   For an open subset $U$  of an irreducible component  of $Y$, 
let $R_U$ be the ring consisting of elements in $F$ which
are regular on $U$.  Then $T \subset R_U$.   
Let $\hat{R}_U$ be the $(t)$-adic completion of $R_U$ and
$F_U$ the field of fractions of $\hat{R}_U$ (cf. [HHK1]). In this section we give
a bound for  the Brauer $p$-dimension of $F$ in terms of  the  $p$-rank of
the residue field  of $T$.    
 
We begin with the following results (3.1, 3.2, 3.3 and 3.4)  which
are  well-known and  we include them   for the sake of completeness. 
 
\begin{lemma}  Let $B$ be a regular local ring with field of
fractions $K$, residue field $\kappa$ and maximal ideal $m$.   
Let $n$ be a natural number  and $u \in B$   a  unit such that $[\kappa(
\sqrt[n]{\overline{u}}) :    \kappa] = n$. 
Then $B[\sqrt[n]{u}]$ is a regular local ring with
residue field $\kappa(\sqrt[n]{\overline{u}})$.
\end{lemma}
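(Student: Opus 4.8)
The statement to be proved is the following: if $B$ is a regular local ring with fraction field $K$, residue field $\kappa$, maximal ideal $\mathfrak m$, and $u \in B$ is a unit whose image $\overline u$ satisfies $[\kappa(\sqrt[n]{\overline u}):\kappa] = n$, then $B[\sqrt[n]{u}] := B[x]/(x^n - u)$ is a regular local ring with residue field $\kappa(\sqrt[n]{\overline u})$. The plan is to set $C = B[x]/(x^n - u)$, show $C$ is finite free over $B$ (of rank $n$, since $x^n - u$ is monic), identify the primes of $C$ above $\mathfrak m$, and then use a standard dimension/regularity criterion.

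First I would observe that $C$ is a finite free $B$-module of rank $n$, hence $C$ is a Noetherian ring, integral over $B$, of the same Krull dimension as $B$, say $\dim B = d$. Reducing modulo $\mathfrak m$, we get $C/\mathfrak m C \cong \kappa[x]/(x^n - \overline u)$. By hypothesis $\overline u$ is not an $r$-th power in $\kappa$ for any $r \mid n$, $r > 1$: indeed $[\kappa(\sqrt[n]{\overline u}):\kappa] = n$ forces $x^n - \overline u$ to be irreducible over $\kappa$ (using the standard fact that for a unit whose image generates a degree-$n$ extension, $x^n - \overline u$ is irreducible — this is where the hypothesis on the degree is essential; in characteristic dividing $n$ one needs a little care but irreducibility of the minimal polynomial of $\sqrt[n]{\overline u}$ of degree exactly $n$ gives it directly). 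Hence $C/\mathfrak m C$ is a field, so $\mathfrak m C$ is a maximal ideal of $C$ and is the unique prime of $C$ lying over $\mathfrak m$; since $B$ is local and $C$ is integral over $B$, every maximal ideal of $C$ lies over $\mathfrak m$, so $\mathfrak n := \mathfrak m C$ is the unique maximal ideal and $C$ is local with residue field $C/\mathfrak n \cong \kappa(\sqrt[n]{\overline u})$.

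Next I would prove regularity of the local ring $(C, \mathfrak n)$. The clean way: $\mathfrak n = \mathfrak m C$ is generated by a regular system of parameters $\pi_1, \dots, \pi_d$ of $B$ (pushed into $C$), so $\mathfrak n$ is generated by $d = \dim C$ elements, which is exactly the regularity criterion $\mu(\mathfrak n) \le \dim C$. Here $\dim C = \dim B = d$ because $C$ is finite over $B$ and is a domain (it's a subring of the field $K(\sqrt[n]{u})$, as $x^n - u$ is irreducible over $K$ — this follows from irreducibility of $x^n - \overline u$ over $\kappa$ by reduction, or one notes $C$ has a unique minimal prime and that it must be $(0)$ since $C/\mathfrak mC$ being a field forces $C$ to be a domain by going-down-free arguments; most simply, $C \hookrightarrow \widehat C$ and one can argue there, but the domain statement is standard). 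Since $\mathfrak n$ is generated by $d$ elements and $\dim C = d$, $C$ is a regular local ring, and its residue field is $\kappa(\sqrt[n]{\overline u})$ as computed.

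The main obstacle, and the step deserving the most care, is establishing that $x^n - \overline u$ is irreducible over $\kappa$ from the single hypothesis $[\kappa(\sqrt[n]{\overline u}):\kappa] = n$ — in particular when $n$ is not prime and/or $\operatorname{char}\kappa \mid n$, the usual "$\overline u$ not a $p$-th power for $p \mid n$, and $-4\overline u$ not a fourth power if $4 \mid n$" criterion (Capelli–Vahlen) must be invoked, or one argues directly that the minimal polynomial of $\sqrt[n]{\overline u}$ divides $x^n - \overline u$ and has degree $n$, hence equals it. Once $C/\mathfrak m C$ is a field everything else is the standard "finite free, residue field an extension field, parameters lift" argument; the irreducibility is the only genuinely arithmetic input. (In the applications of this lemma in the paper, $n$ is a prime power $p^r$, or even just $p$, which simplifies this considerably: one only needs $\overline u \notin \kappa^p$, which is exactly what $[\kappa(\sqrt[p]{\overline u}):\kappa]=p$ says, and then $x^{p}-\overline u$ — or more generally $x^{p^r}-\overline u$ — is irreducible.)
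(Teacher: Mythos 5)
Your proof is correct and follows essentially the same route as the paper's: reduce modulo $m$ to see that $B[\sqrt[n]{u}]/mB[\sqrt[n]{u}]\cong\kappa[x]/(x^n-\overline{u})$ is the field $\kappa(\sqrt[n]{\overline{u}})$ (the degree hypothesis forcing $x^n-\overline{u}$ to be irreducible), so that $m$ generates the maximal ideal, and then conclude regularity because the maximal ideal is generated by $\dim B=\dim B[\sqrt[n]{u}]$ elements. Your write-up simply makes explicit the irreducibility and domain points that the paper leaves implicit.
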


\begin{proof}  Since $[\kappa(\sqrt[n]{\overline{u}}) :
\kappa] = n$, 
$B[\sqrt[n]{u}]/mB[\sqrt[n]{u}] \simeq \kappa(\sqrt[n]{\overline{u}})$
is a field. Thus $m$ generates the maximal ideal of $B[\sqrt[n]{u}]$.
Since the dimension of $B[\sqrt[n]{u}]$ is equal to the dimension of
$B$,   $B[\sqrt[n]{u}]$ is a regular local ring.  
\end{proof}

\begin{lemma}    Let $B$ be a regular local ring with field of
fractions $K$, residue field $\kappa$ and maximal ideal $m$.    
Let $\pi \in m$ be a regular prime and $n$ a natural number. 
Then $B[\sqrt[n]{\pi}]$ is a regular local ring with
residue field $\kappa$.
\end{lemma}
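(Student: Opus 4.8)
The plan is to write $C := B[\sqrt[n]{\pi}] = B[X]/(X^n-\pi)$ and to exhibit it as the quotient of a regular local ring by an element belonging to a regular system of parameters. The case $n=1$ is trivial ($C\cong B$), so assume $n\ge 2$, and let $\theta\in C$ denote the class of $X$.

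The first step is to check that $C$ is local with residue field $\kappa$. As a $B$-module, $C$ is free of rank $n$, hence finite over the local ring $(B,m)$, so every maximal ideal of $C$ contracts to $m$; thus the maximal ideals of $C$ are in bijection with those of $C/mC$. Since $\pi\in m$, we have $C/mC\cong\kappa[X]/(X^n)$, which is local with residue field $\kappa$. Therefore $C$ is local, with maximal ideal $\mathfrak{n}=mC+\theta C$ and $C/\mathfrak{n}=\kappa$.

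Next, pass to $B[X]$, which is regular because $B$ is; localizing at the prime $\mathfrak{M}:=(m,X)$ gives a regular local ring $B[X]_{\mathfrak{M}}$ of dimension $\dim B+1$. Since $C$ is already local with maximal ideal $\mathfrak{M}C$, localizing changes nothing and $C\cong B[X]_{\mathfrak{M}}/(X^n-\pi)$. The \emph{crux} of the argument — and the only place the hypothesis on $\pi$ is used — is that $X^n-\pi$ lies in $\mathfrak{M}\setminus\mathfrak{M}^2$, hence is part of a regular system of parameters of $B[X]_{\mathfrak{M}}$. Membership in $\mathfrak{M}$ is clear. For the non-membership in $\mathfrak{M}^2$: since $n\ge 2$ we have $X^n\in\mathfrak{M}^2$, so it suffices to show $\pi\notin\mathfrak{M}^2$; if $\pi$ lay in $\mathfrak{M}^2 B[X]_{\mathfrak{M}}$, then clearing a denominator $s\in B[X]\setminus\mathfrak{M}$ would give $s\pi\in\mathfrak{M}^2=(m^2,mX,X^2)\subseteq B[X]$, and comparing $X^0$-coefficients — that of $s$ being a unit of $B$ since $s\notin\mathfrak{M}$ — would force $\pi\in m^2$, contradicting that $\pi$, being a regular prime, lies in $m\setminus m^2$.

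Finally, since the quotient of a regular local ring by an element of its maximal ideal that is not in the square of that ideal is again a regular local ring, of dimension one less, we conclude that $C$ is a regular local ring of dimension $\dim B$, with residue field $\kappa$ by the computation above (in particular $C$ is a domain and $C\otimes_B K$ is a field, reflecting that $\Spec C\to\Spec B$ is the $n$-th power cover ramified along the regular divisor $\{\pi=0\}$). I expect the only point requiring real care is the interpretation of ``regular prime'', namely $\pi\in m\setminus m^2$, together with the attendant $X^0$-coefficient bookkeeping showing $\pi\notin\mathfrak{M}^2$; once that is in hand the rest is routine commutative algebra and parallels the proof of Lemma 3.1.
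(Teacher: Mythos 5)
Your proof is correct, but it reaches the conclusion by a different standard criterion than the paper does. The paper's argument is more direct: since $\pi$ is a regular prime, it extends to a regular system of parameters $\pi,\pi_2,\dots,\pi_d$ of $B$, and one checks that $(\sqrt[n]{\pi},\pi_2,\dots,\pi_d)$ is the maximal ideal of $B[\sqrt[n]{\pi}]$ with quotient $\kappa$; as $\dim B[\sqrt[n]{\pi}]=\dim B=d$ (integrality), the maximal ideal is generated by $d$ elements and regularity follows from the definition. You instead present $B[\sqrt[n]{\pi}]$ as the hypersurface $B[X]_{(m,X)}/(X^n-\pi)$ and invoke the criterion that a regular local ring modulo an element of $\mathfrak{M}\setminus\mathfrak{M}^2$ is again regular, the key computation being that $X^n-\pi\notin\mathfrak{M}^2$ because $\pi\notin m^2$. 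Both arguments pivot on the same hypothesis ($\pi\in m\setminus m^2$, equivalently $\pi$ is part of a regular system of parameters), and both need the locality statement, which you establish more carefully via $C/mC\cong\kappa[X]/(X^n)$. Your route costs an extra step (passing through the ambient polynomial ring and the coefficient bookkeeping for $\pi\notin\mathfrak{M}^2$) but avoids having to identify $\dim B[\sqrt[n]{\pi}]$ separately and to count generators against it; the paper's route is shorter once one grants that the dimension is preserved under the finite extension (note the paper's ``dimension \dots is $n$'' is a typo for $d$). Either proof is acceptable.
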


\begin{proof}  Since $B$ is a regular and $\pi \in m$ is  a
regular prime, there  exist $\pi_2, \cdots, \pi_d \in m$ such that $m
= (\pi, \pi_2, \cdots  , \pi_d)$, where $d$ is the dimension of $B$.  
Let $\tilde{m} = (\sqrt[n]{\pi}, \pi_2, \cdots, \pi_d)  \\ \subset
B[\sqrt[n]{\pi}]$. Then $\tilde{m}$ is the maximal ideal of
$B[\sqrt[n]{\pi}]$ and $B[\sqrt[n]{\pi}]/\tilde{m} \simeq \kappa$. 
Since the dimension  of $B[\sqrt[n]{\pi}]$ is $n$, $B[\sqrt[n]{\pi}]$ is  a regular
local ring. 
\end{proof}

\begin{cor}   Let $B$ be a regular local ring with field of
fractions $K$, residue field $\kappa$ and maximal ideal $m$.  
Let $n_1, \cdots , n_r$ be natural numbers and $u_1, \cdots , u_r \in
B$  units such that
$[\kappa(\sqrt[n_1]{\overline{u}_1},  \cdots , \sqrt[n_r]{\overline{u}_r})
: \kappa] = n_1 \cdots n_r$.  Let  $\pi_1, \cdots , \pi_s \in m$ be a system of
regular parameters  in $B$ and  $d_1, \cdots , d_s $ be natural
numbers.  Then   $B[ \sqrt[n_1]{u_1}, \cdots ,
  \sqrt[n_r]{u_r}, \sqrt[d_1]{\pi_1}, \cdots , \sqrt[d_s]{\pi_s}]$ is a regular local
ring with  residue field \\ $\kappa(\sqrt[n_1]{\overline{u}_1}, \cdots ,
\sqrt[n_r]{\overline{u}_r})$.
\end{cor}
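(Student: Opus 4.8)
The plan is to construct
$B' := B[\sqrt[n_1]{u_1}, \ldots, \sqrt[n_r]{u_r}, \sqrt[d_1]{\pi_1}, \ldots, \sqrt[d_s]{\pi_s}]$
by adjoining the radicals one at a time, invoking (3.1) for each of the first $r$ radicals and (3.2) for each of the last $s$ radicals. Since $B'$ is a fixed quotient of a polynomial ring over $B$, it is independent of the order in which the radicals are adjoined, so this is legitimate; the entire content of the argument is to check that the hypotheses of (3.1) and (3.2) hold at every stage of the resulting tower.

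First I would adjoin the unit radicals. Set $\kappa_0 = \kappa$, $\kappa_i = \kappa(\sqrt[n_1]{\overline{u}_1}, \ldots, \sqrt[n_i]{\overline{u}_i})$ and $B_0 = B$, $B_i = B_{i-1}[\sqrt[n_i]{u_i}]$. For each $i$ we have $[\kappa_i : \kappa_{i-1}] \leq n_i$, and the product of these $r$ degrees equals $[\kappa_r : \kappa] = n_1 \cdots n_r$ by hypothesis; multiplicativity of degrees in a tower therefore forces $[\kappa_{i-1}(\sqrt[n_i]{\overline{u}_i}) : \kappa_{i-1}] = n_i$ for every $i$. Each $u_i$ is still a unit in $B_{i-1}$, so (3.1) applies inductively: $B_i$ is a regular local ring with residue field $\kappa_i$, and, since the proof of (3.1) shows the maximal ideal of $B_i$ is generated by that of $B_{i-1}$, the maximal ideal of $B_i$ is $mB_i$. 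As $B_i$ is module-finite over $B$ we have $\dim B_i = \dim B = s$, so $\pi_1, \ldots, \pi_s$ is still a system of regular parameters of $B_i$. After $r$ steps, $B_r$ is a regular local ring with residue field $\kappa_r = \kappa(\sqrt[n_1]{\overline{u}_1}, \ldots, \sqrt[n_r]{\overline{u}_r})$ and with $\pi_1, \ldots, \pi_s$ still a system of regular parameters.

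Next I would adjoin the parameter radicals. Put $C_0 = B_r$ and $C_j = C_{j-1}[\sqrt[d_j]{\pi_j}]$, and argue by induction on $j$ that $C_j$ is a regular local ring with residue field $\kappa_r$ and system of regular parameters $\sqrt[d_1]{\pi_1}, \ldots, \sqrt[d_j]{\pi_j}, \pi_{j+1}, \ldots, \pi_s$. Granting the assertion for $C_{j-1}$, the element $\pi_j$ lies in the maximal ideal of $C_{j-1}$ and is part of a system of regular parameters there, hence is a regular prime; so (3.2) applies and shows that $C_j$ is a regular local ring with residue field $\kappa_r$ whose maximal ideal is $(\sqrt[d_1]{\pi_1}, \ldots, \sqrt[d_j]{\pi_j}, \pi_{j+1}, \ldots, \pi_s)$. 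This set has $s = \dim C_j$ elements, so it is a system of regular parameters, completing the induction. Taking $j = s$ yields $C_s = B'$, a regular local ring with residue field $\kappa_r = \kappa(\sqrt[n_1]{\overline{u}_1}, \ldots, \sqrt[n_r]{\overline{u}_r})$, as claimed.

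The only delicate point, and hence where I expect the main (modest) difficulty to lie, is the bookkeeping carried out above: verifying that the degree hypothesis of (3.1) is inherited by the successive residue fields $\kappa_{i-1}$ (via multiplicativity of degrees in towers), and that the ``regular prime'' hypothesis of (3.2) is inherited by the not-yet-adjoined $\pi_j$ at each stage (via the explicit description of the maximal ideal produced by (3.1) and (3.2)). Once this is in place, the statement is a direct iteration of the two lemmas.
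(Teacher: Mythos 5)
Your proof is correct and follows exactly the route the paper takes: the paper's entire proof is ``Proof follows by induction using (3.1) and (3.2),'' and your write-up simply supplies the bookkeeping (multiplicativity of degrees forcing the hypothesis of (3.1) at each stage, and the explicit maximal ideals showing each $\pi_j$ remains a regular parameter for (3.2)) that the paper leaves implicit.
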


\begin{proof}  Proof follows by induction  using (3.1) and
(3.2). 
\end{proof}

\begin{lemma} (cf. [LPS], 2.4)  Let $R$ be a discrete valuation ring with 
field of fractions $K$. Let $\hat{R}$ be the completion of $R$ at the discrete
valuation and $\hat{K}$  the field of fractions of $\hat{R}$. 
Then  a central simple algebra $D$  over $K$ is unramified at $R$ if
and only if  $D \otimes_K \hat{K}$ is unramified at
$\hat{R}$. 
\end{lemma}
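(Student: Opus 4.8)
Write $\kappa$ for the residue field, which is common to $R$ and $\hat R$, and $\pi$ for a uniformizer. I would dispose of the forward implication at once: if $\AA$ is an Azumaya $R$-algebra with $\AA\otimes_R K$ Brauer equivalent to $D$, then $\AA\otimes_R\hat R$ is an Azumaya $\hat R$-algebra, since the Azumaya property is stable under base change, and $(\AA\otimes_R\hat R)\otimes_{\hat R}\hat K\cong\AA\otimes_R\hat K$ is Brauer equivalent to $D\otimes_K\hat K$, so $D\otimes_K\hat K$ is unramified at $\hat R$.

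For the converse the plan is to argue with maximal orders. First I would fix a maximal $R$-order $\Lambda$ in $D$, which exists because $R$ is Noetherian and integrally closed. Since $\hat R$ is the completion of $R$ and faithfully flat over it, $\Lambda\otimes_R\hat R$ is a maximal $\hat R$-order in the central simple $\hat K$-algebra $D\otimes_K\hat K$. Over the complete discrete valuation ring $\hat R$, all maximal orders in a fixed central simple algebra are conjugate, and the maximal order of an unramified central simple algebra is Azumaya; since $D\otimes_K\hat K$ is unramified at $\hat R$ by hypothesis, it follows that $\Lambda\otimes_R\hat R$ is an Azumaya $\hat R$-algebra. It then remains to descend along the faithfully flat map $R\to\hat R$: the order $\Lambda$ is finitely generated and torsion-free, hence free, over the discrete valuation ring $R$, and the canonical map $\Lambda\otimes_R\Lambda^{\mathrm{op}}\to\End_R(\Lambda)$ becomes an isomorphism after base change to $\hat R$, so it is already an isomorphism; thus $\Lambda$ is an Azumaya $R$-algebra, and since $\Lambda\otimes_R K\cong D$ (a maximal order is a full lattice), $D$ is unramified at $R$.

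The hard part is the converse, and the only genuinely nontrivial ingredient there is the uniqueness up to conjugacy of maximal orders over a complete discrete valuation ring --- equivalently, that every lattice over the maximal order of a central division algebra over a complete discretely valued field is free --- while the existence of maximal orders, their compatibility with completion, and the faithfully flat descent of the Azumaya property are all standard. One could instead proceed cohomologically, identifying ``$D$ is unramified at $R$'' with the vanishing of a residue invariant $\partial_R(D)$ computed solely from the valuation and the residue field $\kappa$, hence unchanged on passing to $\hat R$, so that $\partial_R(D)$ and $\partial_{\hat R}(D\otimes_K\hat K)$ agree; the subtle point in that route is the $p$-primary part of the Brauer group in the mixed-characteristic case, where purity fails over $\Spec R$ and one must invoke Kato's description recalled in Section~2 --- a difficulty the maximal-order argument avoids.
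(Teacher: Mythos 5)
The paper does not actually prove this lemma: it is stated with the citation ``(cf.\ [LPS], 2.4)'' and no proof is supplied, so there is no in-paper argument to compare yours against. On its own merits your proof is correct and is the standard one. The forward direction via base change of Azumaya algebras is immediate, and your converse via maximal orders is sound: maximality of an order over a discrete valuation ring is preserved under completion, all maximal orders in a central simple algebra over a complete discrete valuation ring are conjugate to $M_n(\Gamma)$ with $\Gamma$ the maximal order of the division part, $\Gamma$ is Azumaya precisely when the class lies in the image of $\Br(\hat R)$, and faithfully flat descent along $R\to\hat R$ of the isomorphism $\Lambda\otimes_R\Lambda^{\mathrm{op}}\to\End_R(\Lambda)$ (legitimate since $\Lambda$ is finite free over the DVR $R$) finishes the argument. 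Two small remarks: your converse in fact produces an Azumaya $R$-algebra whose generic fibre is $D$ itself, which is stronger than the paper's definition of unramified (which only asks for Brauer equivalence), and is harmless; and you are right to flag that the purely cohomological route via residues is delicate for the $p$-primary part in mixed characteristic, which is exactly why the maximal-order argument is the one used in the literature the paper points to.
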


\begin{prop} Let $A$  be a  complete regular
local ring of dimension 2 with field of fractions $F$, residue field
$\kappa$ and maximal ideal $m = (\pi, \delta)$. 
Suppose that char$(F) = 0$ and char$(\kappa) = p > 0$ with
$p$-rank$(\kappa) = n$.  Let $a_1, \cdots , a_n \in \kappa$ be a
$p$-basis of $\kappa$ and  $u_1,  \cdots , u_n \in A$ be lifts of
$a_1, \cdots , a_n$.  Suppose that 
$F$ contains a primitive $p^{\rm th}$ root of unity. Let
$D$ be a central simple algebra over $F$ of period $p$.  Suppose that
$D$ is ramified on $A$ at most at $(\pi)$ and $(\delta)$.  Then $D
\otimes_F  F(\sqrt[p^2]{u_1}, \cdots ,\sqrt[p^2]{u_n}, \sqrt[p]{\pi},
\sqrt[p]{\delta})$ is a matrix algebra.  In particular, index$(D)$ divides $p^{2n+2}$. 
\end{prop}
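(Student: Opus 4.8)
The plan is to reduce the problem to the one-variable case treated in Section 2 by passing to a suitable complete discretely valued field. Since $A$ is a two-dimensional complete regular local ring with maximal ideal $m = (\pi, \delta)$, I would first localize at the height-one prime $(\delta)$: the local ring $A_{(\delta)}$ is a discrete valuation ring with fraction field $F$, and its completion $\wh{A_{(\delta)}}$ is a complete discrete valuation ring whose fraction field, call it $F_\delta$, has residue field $\kappa((\pi))$ (the fraction field of the complete local ring $A/(\delta)$, which is a complete DVR with residue field $\kappa$). The key point is that $D$ is assumed ramified on $A$ at most at $(\pi)$ and $(\delta)$, so after passing to $F_\delta$ the only possible ramification is along the image of $(\pi)$.

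The second step is to identify the residue field of $F_\delta$ and compute its $p$-rank. The residue field is $\kappa((\pi))$ — or more precisely the fraction field of $\wh{A/(\delta)}$, which since $A/(\delta)$ is a complete DVR with residue field $\kappa$ and parameter $\pi$ is isomorphic to $\kappa((\pi))$. Now the $p$-rank of $\kappa((\pi))$ is $n+1$: a $p$-basis is given by $a_1, \dots, a_n$ together with $\pi$. So $F_\delta$ is a complete discretely valued field of characteristic $0$ with residue field of $p$-rank $n+1$, and $D \otimes_F F_\delta$ has period dividing $p$. Applying Theorem (2.4) to $F_\delta$ with the parameter $\delta$ and the lifts $u_1, \dots, u_n, \pi$ of a $p$-basis of $\kappa((\pi))$, I conclude that
$$D \otimes_F F_\delta\bigl(\sqrt[p^2]{u_1}, \dots, \sqrt[p^2]{u_n}, \sqrt[p^2]{\pi}, \sqrt[p]{\text{(last element)}}, \sqrt[p]{\delta}\bigr)$$
is a matrix algebra; one has to be slightly careful about which lift plays the role of ``$u_n$'' in (2.4), but at worst this forces a $p^2$-th root of one more of the $u_i$ or of $\pi$, and in all cases the splitting field is contained in $F(\sqrt[p^2]{u_1}, \dots, \sqrt[p^2]{u_n}, \sqrt[p]{\pi}, \sqrt[p]{\delta})$ after possibly enlarging a $\sqrt[p]{\phantom{x}}$ to a $\sqrt[p^2]{\phantom{x}}$ — and indeed the stated splitting field in the proposition already has $\sqrt[p^2]{u_i}$ for all $i$ and only $\sqrt[p]{\pi}$, so I should check that the residue-field $p$-basis can be chosen with $\pi$ as the ``$u_n$''-slot (requiring only a $p$-th root of $\pi$) rather than as one of the slots requiring a $p^2$-th root.

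Once $D \otimes_F F_\delta$ is split by $L_\delta := F_\delta(\sqrt[p^2]{u_1}, \dots, \sqrt[p^2]{u_n}, \sqrt[p]{\pi}, \sqrt[p]{\delta})$, the third step is to descend: let $L := F(\sqrt[p^2]{u_1}, \dots, \sqrt[p^2]{u_n}, \sqrt[p]{\pi}, \sqrt[p]{\delta})$. By Corollary (3.3), the ring $B := A[\sqrt[p^2]{u_1}, \dots, \sqrt[p^2]{u_n}, \sqrt[p]{\pi}, \sqrt[p]{\delta}]$ is a regular local ring (the $u_i$ being units with $[\kappa(\sqrt[p^2]{\ol u_i}) : \kappa] = p^2$ since the $a_i$ are a $p$-basis, and $\pi, \delta$ being part of a regular system of parameters), with fraction field $L$. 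The algebra $D \otimes_F L$ is ramified on $B$ at most along the divisors lying over $(\pi)$ and $(\delta)$; but we have adjoined $\sqrt[p]{\pi}$ and $\sqrt[p]{\delta}$, which I expect kills the ramification of a period-$p$ algebra along those divisors — more carefully, since $D\otimes_F F_\delta$ is already split, $D\otimes_F L$ is unramified at the DVR $B_{(\text{prime over }\delta)}$ by Lemma (3.4) applied to the completion, and symmetrically it is unramified at the prime over $(\pi)$ by the analogous computation localizing at $(\pi)$ first. Hence $D \otimes_F L$ is unramified on all of $B$ in codimension one, so by purity for the Brauer group over the regular local ring $B$ (equivalently, since $\Br(B) \hookrightarrow \Br(L)$ and the class is unramified everywhere in codimension one it extends to $B$, then restricts to $\Br(\kappa(\sqrt[p^2]{\ol u_i}))$ — which is the residue field, of $p$-rank $n$, and the class there is the residue of a split-in-codimension-one class), the class of $D \otimes_F L$ comes from $\Br(B)$ and is determined by its specialization to the closed point; by naturality this specialization is trivial because $D$ was already split over $F_\delta$. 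Therefore $D \otimes_F L$ is trivial, i.e. $L$ splits $D$, and $[L : F]$ divides $p^{2n+2}$, so $\mathrm{index}(D)$ divides $p^{2n+2}$.

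The main obstacle I anticipate is the bookkeeping in the last step: making the ``unramified in codimension one over a $2$-dimensional regular local ring $\Rightarrow$ extends to an Azumaya algebra'' argument rigorous (invoking the right form of purity / the Auslander–Goldman theorem for regular local rings of dimension $2$), and then correctly tracking that the resulting Azumaya class specializes to zero at the closed point. A cleaner alternative, which I would likely pursue instead, is to avoid purity entirely: apply the one-variable Theorem (2.4) twice — once over $F_\delta$ and once over $F_\pi$ (the analogous field obtained by completing along $(\pi)$, whose residue field $\kappa((\delta))$ again has $p$-rank $n+1$) — to see that $D$ is split by $L$ locally at both the $\delta$-adic and the $\pi$-adic valuation of the regular local ring $B$; since $B$ is a $2$-dimensional regular local ring, a class in $\Br(L)$ unramified at all height-one primes of $B$ that additionally dies in the two ``coordinate'' completions must already be zero, but the most economical route is: over $B$, the class is unramified in codimension one, hence lies in $\Br(B) = \Br(\text{closed point})$ by Auslander–Goldman, and the specialization map factors through $\Br(F_\delta) \to \Br(\text{residue field of }F_\delta)$ compatibly, forcing triviality. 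Either way the numerical conclusion is that $L$, of degree dividing $p^{2n+2}$ over $F$, splits $D$.
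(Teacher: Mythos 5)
Your strategy coincides with the paper's: form $B = A[\sqrt[p^2]{u_1}, \dots, \sqrt[p^2]{u_n}, \sqrt[p]{\pi}, \sqrt[p]{\delta}]$, a complete regular local ring by (3.3) with fraction field $E$; verify that $D\otimes_F E$ is unramified at every height-one prime of $B$, the only delicate primes being those over $(\pi)$ and $(\delta)$, which you treat by completing and invoking (2.4) together with (3.4); then extend the class to an Azumaya algebra over $B$ by Auslander--Goldman, kill it at the closed point by specializing through $\widehat{B}_Q$ and $B/Q$, and conclude from $\Br(B)\cong\Br(B/\tilde{m})$ for the complete local ring $B$. This is exactly the paper's proof, so the architecture is right, and the ``cleaner alternative'' you sketch at the end is not really an alternative --- it is the same argument. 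Your observation that the $p$-basis of the residue field can be ordered so that $\pi$ (resp.\ $\delta$) occupies the slot requiring only a $p$-th root is also correct and is how the paper arranges things.

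The one genuine gap is your identification of the residue field of $F_\delta$. You assert that $A/(\delta)$ is isomorphic to $\kappa[[\overline{\pi}]]$, so that $F_\delta$ has residue field $\kappa((\overline{\pi}))$, a characteristic-$p$ field of $p$-rank $n+1$ to which (2.4) applies. This holds only when $p\in(\delta)$, i.e.\ when $A/(\delta)$ is equicharacteristic. In general $A/(\delta)$ is a complete discrete valuation ring of mixed characteristic: for $A=\mathbb{Z}_p[[t]]$ with $\pi=p$ and $\delta=t$ one has $A/(\delta)=\mathbb{Z}_p$, the residue field of $F_\delta$ is $\mathbb{Q}_p$ of characteristic $0$, there is no $p$-rank, and (2.4) says nothing. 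The paper therefore splits into two cases at each of the primes $(\pi)$ and $(\delta)$: when the residue characteristic at $P$ is $p$ it argues exactly as you do, and when it is not $p$ it uses tame ramification --- the residue of the period-$p$ class at $P$ is killed by the totally ramified degree-$p$ extension $E_Q/F_P$, so $D\otimes_F E$ is unramified at $Q$ for that (easier) reason. You need to supply this second case for each of the two primes. Note also that in the tame case one only obtains unramifiedness of $D\otimes_F E$ at $Q$, not splitness over $E_Q$, so the final specialization-to-the-closed-point step should be routed through a prime at which splitness over the completion is actually available, as in the paper. With these points added, your argument is complete.
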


\begin{proof}    Let $$E = F(\sqrt[p^2]{u_1}, \cdots ,\sqrt[p^2]{u_n}, \sqrt[p]{\pi},
\sqrt[p]{\delta})$$
and    
$$ B = A[\sqrt[p^2]{u_1}, \cdots ,\sqrt[p^2]{u_n}, \sqrt[p]{\pi},
\sqrt[p]{\delta}]  \subset E.$$ 
By (3.3), $ B $ is a complete regular local ring of
dimension  2 with field of fractions $E$ and residue field
$\kappa(\sqrt[p^2]{a_1}, \cdots \sqrt[p^2]{a_n})$.   

 We first show that $D \otimes_F E$ is  unramified on
 $B$. Since $ B $ is a regular local ring of dimension 2, it is enough
 to show that $D \otimes_FE$ is unramified at every height one prime
 ideal  of $B$ ([AG], 7.4). Let $Q$ be a height one prime ideal of
 $B$ and $P = Q \cap A$. Since $ B$ is integral over
 $A$, the height of $P$ is 1.  If $P \neq (\pi) $ or $(\delta)$, then $D$
 is unramified at $P$ and hence $D \otimes_F E$ is unramified at
 $Q$. Suppose that $P = (\pi)$. Then $Q = (\sqrt[p]{\pi})$. 
 
 Suppose that char$(A/P) \neq p$.  Since $E/F$ is ramified at $P$ and
 char$(\kappa(P)) \neq p$,  $D \otimes _FE$ is
 unramified at $Q$.   Suppose that char$(A/P) = p$.  Since $A$ is
 complete regular local ring with  maximal ideal
 $m = (\pi, \delta)$,  $A/(\pi)$ is a complete discrete valuation ring with residue field 
 $\kappa$ and char$(A/P) = $ char$(\kappa)= p$. In particular, $A/(\pi)  \simeq
 \kappa[[\overline{\delta}]]$, where $\overline{\delta}$    is the
 image of $\delta$ in $A/(\pi)$.  Let $\kappa(P)$ be the field of fractions of $A/P$.
 Then $\kappa(P) \simeq \kappa((\overline{\delta}))$.   Since $a_1, \cdots , a_n$ is a 
 $p$-basis of $\kappa$ and $u_1, \cdots, u_n \in A$ are  lifts of   $a_1, \cdots , a_n$, 
 the images of $u_1, \cdots, u_n,  \delta$ in $\kappa(P)$ is a $p$-basis of $\kappa(P)$.  
 Let $F_P$ be the completion of $F$ at $P$ and $E_Q$  the completion of $E$ at $Q$. 
 Since  $E_Q \simeq F_P(\sqrt[p^2]{u_1}, \cdots ,\sqrt[p^2]{u_n}, \sqrt[p]{\delta},\sqrt[p]{\pi})$ and the 
 residue field of $F_P$ is $\kappa(P)$,  by  (2.4),   $D \otimes _FE_Q$ is split and hence unramified.  
Thus,  by (3.4),  $D \otimes_F E $ is unramified at  $Q$.

By ([AG], 7.4), there exists an Azumaya $B$-algebra  ${\DD}$   such
that ${\DD}  \otimes_B E \simeq D \otimes _F E$.  Since $D \otimes E_Q$ is
split and $\hat{B}_Q$ is a discrete valuation ring, ${\DD} \otimes_B \hat{B}_Q$ 
is zero in the Br$(\hat{B}_Q)$ ([AG], 7.2). In particular the  image
${\DD} \otimes_B \kappa(Q)$ of ${\DD} \otimes_B \hat{B}_Q$  
in Br$(\kappa(Q))$ is zero.  Since $\kappa(Q)$ is the field of
fractions of  regular local ring $B/Q$,  by ([AG], 7.2), 
${\DD} \otimes _B B/Q$ is zero in Br$(B/Q)$. Hence ${\DD}\otimes
_B B/\tilde{m}$ is zero in Br$(B/\tilde{m})$, where  
$\tilde{m}$ is the maximal ideal of $B$.  Since $B$ is a complete regular local ring, 
${\DD} = 0 \in$ Br$(B)$ ([C], [KOS]). In particular ${\DD}
\otimes _B E \simeq D \otimes _F E$ is zero and  
 index$(D)$ divides $[E : F] = p^{2n+2}$.   
\end{proof}

\begin{theorem} Let $K$ be a complete discretely valued field
  with   residue field $\kappa$.   Suppose that char$(K) = 0$, 
char$(\kappa) = p > 0$ and  $p$-rank$(\kappa) = n$.  Let $F$ be a finitely generated 
field extension of $K$ of transcendence degree 1  
and $D$ a central simple algebra over $F$ of period $p$. 
Then  ind$(D)$ divides $p^{2n + 2}$. 
\end{theorem}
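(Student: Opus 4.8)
The plan is to spread $D$ out over a regular model of $F$ over the valuation ring of $K$, put its ramification divisor into normal crossings position, and then glue together local index bounds: at the closed points of the special fibre these come from $(3.5)$, and along the components of the special fibre from $(2.4)$.

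First I would reduce to the case where $F$ contains a fixed primitive $p$-th root of unity $\zeta$: since $\mathrm{char}(K)=0$ we have $p\nmid[F(\zeta):F]$, so replacing $F$ by $F(\zeta)$ changes neither the $p$-rank of the relevant residue field nor the (necessarily $p$-power) index of $D$. Let $T$ be the valuation ring of the algebraic closure of $K$ in $F$ --- again a complete discretely valued field of characteristic $0$ with residue field of $p$-rank $n$ --- and let $t\in T$ be a parameter. By resolution of singularities for excellent two-dimensional schemes, choose a regular integral proper $T$-scheme $\XX$ of dimension $2$ with function field $F$, and then, after finitely many further blow-ups at closed points, arrange that the union of the ramification divisor of $D$ on $\XX$ and the special fibre $Y$ is a normal crossings divisor. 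Then for every closed point $P$ of $Y$ the completed local ring $\hat{\OO}_{\XX,P}$ is regular of dimension $2$ with maximal ideal $(\pi,\delta)$, and $D\otimes_F F_P$ is ramified at most along $(\pi)$ and $(\delta)$, where $F_P=\mathrm{Frac}(\hat{\OO}_{\XX,P})$.

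Next I would invoke the patching framework of [HHK1]: pick a finite set $\PP$ of closed points of $Y$ containing all the points where two components of the above normal crossings divisor meet, let $\mathcal U$ be the set of connected components of $Y\setminus\PP$, and form the overfields $F_P$ ($P\in\PP$), $F_U$ ($U\in\mathcal U$) and the branch fields. A local--global argument in the spirit of [HHK1] then shows that $\mathrm{ind}(D)$ divides the least common multiple of the local indices $\mathrm{ind}(D\otimes_F F_P)$ and $\mathrm{ind}(D\otimes_F F_U)$, so it suffices to bound each of these by $p^{2n+2}$. For $P\in\PP$, the ring $\hat{\OO}_{\XX,P}$ is a complete regular local ring of dimension $2$ with fraction field $F_P$ of characteristic $0$, residue field of characteristic $p$ and $p$-rank $n$ (a finite extension of $\kappa$), containing $\zeta$, and by the normal crossings arrangement $D\otimes F_P$ is ramified along at most two of the parameters; so $(3.5)$ gives $\mathrm{ind}(D\otimes F_P)\mid p^{2n+2}$. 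For $U\in\mathcal U$, with respect to the discrete valuation of $F_U$ attached to the component of $Y$ containing $U$, the residue field is the function field $\kappa(U)$, a finitely generated field of transcendence degree $1$ over a finite extension of $\kappa$, hence of $p$-rank $n+1\geq1$; and since every horizontal component of the ramification of $D$, together with every crossing point, has been absorbed into $\PP$, over $F_U$ the algebra $D$ is ramified at most at that one valuation. Theorem $(2.4)$, applied here, then yields $\mathrm{ind}(D\otimes F_U)\mid p^{2(n+1)}=p^{2n+2}$ (and the branch fields, being complete discretely valued with residue field of $p$-rank $n+1$, are treated the same way). Combining these, $\mathrm{ind}(D)\mid p^{2n+2}$; with $(1.1)$ this also gives $\mathrm{Br}_p\mathrm{dim}(F)\leq 2n+2$.

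Two points require genuine work. The first is the model: one must not just resolve to a regular $\XX$ but blow up enough that the ramification of $D$ meets the special fibre transversally and no closed point lies on three branches, so that $(3.5)$ applies at every $P$; keeping track of the residue fields through these operations --- so that they still have $p$-rank $n$ at closed points and $p$-rank $n+1$ along components --- is exactly where the hypothesis on the $p$-rank of $\kappa$ is used. The second, which I expect to be the main obstacle, is the passage from the local bounds to the global one: the local--global principle patches \emph{splitting data}, so one must arrange that the splitting fields furnished by $(3.5)$ and $(2.4)$ --- built from $p$-power roots of globally chosen lifts of a $p$-basis of $\kappa$ together with local parameters such as $t$ --- can be matched over the branch fields and so glued into a single finite extension of $F$ of degree dividing $p^{2n+2}$. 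This is what the factorization/patching machinery of [HHK1] provides, and it needs no coprimality hypothesis because the relevant factorization is for $\GL_n$ rather than $\PGL_n$; the genuinely new ingredient is the wild local analysis at the residue-characteristic-$p$ points, already carried out in $(3.5)$.
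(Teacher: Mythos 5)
Your overall architecture coincides with the paper's: reduce to $\mu_p\subset F$, take a regular model with the ramification divisor and the special fibre in normal crossings position, invoke the patching local--global principle of [HHK1] to reduce to bounding the index over the fields $F_P$ and $F_U$, use (3.5) at closed points and the complete-discretely-valued-field result of Section 2 along components. The worry you single out at the end (gluing the various splitting fields into one extension of $F$) is not actually an obstacle: the paper simply quotes [HHK1], 5.1, which gives $\mathrm{ind}(D)=\mathrm{l.c.m.}\{\mathrm{ind}(D\otimes F_\zeta)\}$ directly, so no matching of splitting data is required.

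There is, however, a genuine gap in your treatment of the components $U$. You apply (2.4) to $F_U$ ``with respect to the discrete valuation attached to the component containing $U$,'' but $F_U$ is \emph{not} a complete discretely valued field for that valuation: it is the fraction field of the $t$-adic completion of $R_U$, and it sits as a proper subfield of the completion $F_\eta$ at the generic point $\eta$ of the component. Since $F_U\subset F_\eta$, knowing $\mathrm{ind}(D\otimes F_\eta)\mid p^{2n+2}$ (which is what (2.4) legitimately gives, the residue field $\kappa(\eta)$ having $p$-rank $n+1$) only bounds the index over the \emph{larger} field, which is the wrong direction. The missing ingredient is the step the paper takes from [HHK2], 5.8 together with [KMRT], 1.17: for each $\eta$ there is an open set $U_\eta\ni\eta$ with $\mathrm{ind}(D\otimes F_{U_\eta})=\mathrm{ind}(D\otimes F_\eta)$; one then enlarges the finite set of closed points to contain $Y\setminus\bigcup U_\eta$, so that every component $U$ occurring in the patching decomposition satisfies $F_{U_\eta}\subset F_U$ and hence inherits the bound. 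Without this shrinking-and-enlarging step your bound at the $F_U$'s is unjustified. (The same remark applies to your aside about branch fields, though those do not in fact enter the index formula of [HHK1], 5.1.)
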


\begin{proof}   As in the proof of (2.4), we assume without loss of
  generality that    $F$  contains a primitive $p^{\rm th}$  root of unity.
Let $K'$ be a finite extension of $K$. Then $K'$ is also a complete discretely valued
field with the $p$-rank of the residue field is  $n$.  Thus, replacing 
$K$ by a finite extension of $K$, we assume that  $F$ is  the 
function field of a geometrically integral  smooth  projective curve $X$ over
$K$.    

We choose a proper regular model ${\XX}$ of $F$ over $T$ such that
the support of the ramification divisor of $D$ and the components of
the reduced special fibre are a union of regular curves with normal
crossings on ${\XX}$.  Let $Y$ be the special fibre of ${\XX}$.

Let $\eta$ be a generic point of an irreducible component of $Y$
and $F_\eta$ the completion of $F$ at the discrete valuation given by
$\eta$. Then the residue field $\kappa(\eta)$ of $F_\eta$ is function
field of transcendence degree one over $\kappa$. Since $[\kappa :
\kappa^p] = p^n$, we have $[\kappa(\eta) : \kappa(\eta)^p] = p^{n+1}$.
By (2.4),  ind$(D \otimes_F  F_\eta)$ divides $p^{2n+2}$. By ([HHK2],
5.8 and [KMRT],  1.17),
there exists an irreducible open set $U_\eta$ of $Y$ containing $\eta$ such
that ind$(D \otimes_F F_{U_\eta}) = $ ind$(D \otimes _F F_\eta)$. In
particular ind$(D \otimes_F F_{U_\eta})$ divides $p^{2n+2}$.   

Let $S_0$ be a finite set of closed points of ${\XX}$ containing 
all the points of   intersection of the components of  $Y$ and the
support of the ramification divisor of $D$.  Let $S$ be a finite set
of closed points of ${\XX}$ containing $S_0$ and $ Y \setminus (\cup~
U_\eta)$, where $\eta$ varies over generic points of $Y$. Then, by
([HHK1], 5.1), 
 $$ {\rm ind}(D) = l.c.m \{ {\rm ind}(D \otimes F_\zeta \},$$
where $\zeta$ running over $S$ and irreducible components of $Y
\setminus S$. 

Suppose $\zeta = U$ for some irreducible component $U$ of $Y \setminus
S$. Let $\eta$ be the generic point of $U$.  Then $U \subset
U_\eta$ and $R_{U_\eta} \subset R_U$. Since $F_{U_\eta} \subset F_U$,
ind$(D \otimes_F F_U)$ divides  $p^{2d+2}$. 

Suppose $\zeta = P \in S$.  Let $A_P$ be the regular local ring at
$P$. Then, by the choice of ${\XX}$, the maximal ideal $m_P$ of
$A_P$ is generated by $\pi$ and $\delta$ such that $A$ is ramified on
$A_P$ at most  possibly at $(\pi)$ and $(\delta)$. Since the residue
field $\kappa(P)$ at $P$ is a finite extension of $\kappa$, we have
$p$-rank$(\kappa(P)) = p$-rank$(\kappa) =  p^n$. Thus, by (3.5), ind$(D \otimes_F
F_P)$ divides $p ^{2n+2}$.  Hence ind$(D)$  divides
$p^{2n+2}$.   
\end{proof}

\begin{cor} Let  $F$ and $n$ be as in (3.6). Then Br$_p$dim$(F) \leq 2n+2$. 
\end{cor}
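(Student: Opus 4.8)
The plan is to deduce this immediately from Theorem (3.6) together with the reduction Lemma (1.1). By (1.1) applied with $\ell = p$ and $d = 2n+2$, it suffices to check the following: for every finite field extension $F'$ of $F$ and every central simple algebra $A$ over $F'$ of period $p$, the index $\mathrm{ind}(A)$ divides $p^{2n+2}$.

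The key observation is that a finite extension $F'$ of $F$ is again a finitely generated field extension of $K$ of transcendence degree $1$. Indeed, $F$ is finitely generated over $K$ by hypothesis, so $F'$ is finitely generated over $F$ and hence over $K$; and passing to a finite extension does not change the transcendence degree, so $\mathrm{tr.deg}_K F' = \mathrm{tr.deg}_K F = 1$. Therefore the hypotheses of Theorem (3.6) are satisfied with $F$ replaced by $F'$ (the residue field $\kappa$ of $K$, and hence its $p$-rank $n$, are unchanged since $K$ itself is not altered). Applying (3.6) to $A$ over $F'$ yields that $\mathrm{ind}(A)$ divides $p^{2n+2}$.

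Since this holds for every finite extension $F'$ of $F$ and every period-$p$ central simple algebra over $F'$, Lemma (1.1) gives $\mathrm{Br}_p\mathrm{dim}(F) \leq 2n+2$, as desired. There is essentially no obstacle here: all the substantive work has already been carried out in the proof of Theorem (3.6) (the model-theoretic patching argument of Harbater--Hartmann--Krashen combined with the local bounds from (2.4) and (3.5)); the corollary is merely the packaging of that index bound into the language of Brauer $p$-dimension via the elementary induction of (1.1).
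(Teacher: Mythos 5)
Your proposal is correct and follows the paper's own argument exactly: apply Theorem (3.6) to each finite extension $F'$ of $F$ (noting $F'$ is again finitely generated of transcendence degree $1$ over $K$) to bound indices of period-$p$ algebras, then invoke the reduction lemma (1.1). (The paper's proof cites ``(1.6)'' for this last step, which is evidently a typo for (1.1) --- the lemma you correctly identify.)
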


\begin{proof} Let $F'$ be a finite extension of $F$ and $D$ a central simple algebra
of period $p$.  Since the transcendence degree of $F'$ over $K$ is 1,
by (3.6), ind$(D)$  divides $p^{2n+2}$.
Corollary follows from (1.6). 
\end{proof}

\begin{cor}  Let $K$ be a complete discretely valued field with
residue field $\kappa$. Suppose that $\kappa$ is finitely generated
field of transcendence   degree  $n$ over a perfect field of
characteristic $p>0$.  If $F$ is a function field of a curve over $K$,
then   the   Brauer $p$-dimension of $F$
is at most $2n+2$.   
\end{cor}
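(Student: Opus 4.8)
The statement to be proved is Corollary 3.9: if $K$ is complete discretely valued with residue field $\kappa$ finitely generated of transcendence degree $n$ over a perfect field of characteristic $p>0$, then $\mathrm{Br}_p\mathrm{dim}(F)\leq 2n+2$ for $F$ the function field of a curve over $K$. The plan is to derive this directly from Corollary 3.7 (or equivalently Theorem 3.6 together with Lemma 1.1) by computing the $p$-rank of $\kappa$. The only arithmetic input needed is that if $\kappa_0$ is perfect of characteristic $p$, then $\kappa_0$ has $p$-rank $0$, and adjoining transcendental elements raises the $p$-rank by exactly the transcendence degree.

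First I would record the $p$-rank computation. Since $\kappa_0$ is perfect, $[\kappa_0:\kappa_0^p]=1$. If $\kappa = \kappa_0(t_1,\dots,t_n)$ is purely transcendental of degree $n$, then $\{t_1,\dots,t_n\}$ is a $p$-basis, so $[\kappa:\kappa^p]=p^n$ and the $p$-rank of $\kappa$ is $n$; for a general finitely generated extension of transcendence degree $n$, the $p$-rank is still exactly $n$ because a finite extension does not change the $p$-rank (this is the fact $[\kappa':\kappa'^p]=[\kappa:\kappa^p]$ cited from [B], A.V.135, Corollary 3, used already in the proof of Corollary 1.2). Thus $p\text{-rank}(\kappa)=n$.

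Next, I would invoke Corollary 3.7 (and its underlying Theorem 3.6) with this value of the $p$-rank: for any finitely generated extension $F'/K$ of transcendence degree $1$ and any central simple algebra $D$ over $F'$ of period $p$, the index of $D$ divides $p^{2n+2}$. Since any finite extension $F'$ of $F$ is again finitely generated of transcendence degree $1$ over $K$ (and $K$ remains complete discretely valued with residue field of $p$-rank $n$ after any finite base change, as noted in the proof of Theorem 3.6), Lemma 1.1 then gives $\mathrm{Br}_p\mathrm{dim}(F)\leq 2n+2$. In fact this is literally the content of Corollary 3.7 applied with the $n$ of that statement equal to the $n$ here, so the corollary is immediate.

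There is no real obstacle here: the statement is a specialization of Corollary 3.7, and the only thing to check is the $p$-rank bookkeeping, which is routine. The one point worth stating carefully is why the $p$-rank of $\kappa$ equals the transcendence degree $n$ and not something larger — namely that passing from the perfect base field $\kappa_0$ to a finitely generated extension of transcendence degree $n$ adds exactly $n$ to the (logarithmic) $p$-rank, using that a separating transcendence basis exists only up to the issue of inseparability, but since $\kappa_0$ is perfect every finitely generated extension of transcendence degree $n$ has a $p$-basis of size exactly $n$. Once $p\text{-rank}(\kappa)=n$ is in hand, the result follows verbatim from (3.7).
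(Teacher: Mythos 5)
Your proposal is correct and follows essentially the same route as the paper: the paper's proof likewise just notes that $[\kappa:\kappa^p]=p^n$ for a finitely generated extension of transcendence degree $n$ over a perfect field (citing [B], A.V.135, Corollary 3) and then invokes Theorem 3.6 / Corollary 3.7. Your extra care in justifying the $p$-rank bookkeeping is fine but not a different argument.
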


\begin{proof}  Since $\kappa$ is a finitely generated field of
transcendence degree $d$ over a perfect field, we have $[\kappa :
\kappa^p] =  p^n$ ([B], A.V.135, Corollary 3). 
Hence the result follows from (3.6).  
\end{proof}
 
Let $K$ be a $p$-adic field and $F$ the function field of curve over
$K$. Let $A$ be a central simple algebra over $F$ . If the  period of $A$ is 
coprime to
$p$, then a theorem of Saltman ([S1]) asserts that ind$(A)$ divides
per$(A)^2$.  If the period of $A$ is a power of $p$, then it is proved in 
([LPS]) that the ind$(A)$ divides per$(A)^3$.  We have the following

\begin{cor} Let $F$ be the function field of a curve
over a $p$-adic field $K$. Then for every central simple algebra over
$F$, the index divides  the square of the period. 
\end{cor}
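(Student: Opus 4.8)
The plan is to reduce to the $\ell$-primary components of the Brauer class and to treat the prime-to-$p$ part by Saltman's theorem and the $p$-part by Corollary 3.7. Let $A$ be a central simple algebra over $F$ and put $e = \mathrm{per}(A)$, with prime factorization $e = \prod_\ell \ell^{m_\ell}$. Writing the class of $A$ in $\mathrm{Br}(F)$ as the sum of its $\ell$-primary components, we obtain central simple algebras $A_\ell$ over $F$ with $\mathrm{per}(A_\ell) = \ell^{m_\ell}$ whose tensor product is Brauer equivalent to $A$; since the indices of algebras of pairwise coprime period multiply, $\mathrm{ind}(A) = \prod_\ell \mathrm{ind}(A_\ell)$. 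Hence it suffices to prove that $\mathrm{ind}(A_\ell)$ divides $\ell^{2m_\ell}$ for every prime $\ell$.

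For $\ell \neq p$, the period of $A_\ell$ is coprime to $p$, so $\mathrm{ind}(A_\ell)$ divides $\mathrm{per}(A_\ell)^2 = \ell^{2m_\ell}$ by the theorem of Saltman ([S1]) recalled above. For $\ell = p$, observe that $K$, being a $p$-adic field, is a complete discretely valued field of characteristic $0$ whose residue field $\kappa$ is finite, hence perfect of characteristic $p$; thus the $p$-rank of $\kappa$ is $n = 0$. Moreover $F$ is a finitely generated extension of $K$ of transcendence degree $1$, so Corollary 3.7 applies and yields Br$_p$dim$(F) \leq 2 \cdot 0 + 2 = 2$. By the definition of the Brauer $p$-dimension this gives that $\mathrm{ind}(A_p)$ divides $\mathrm{per}(A_p)^2 = p^{2m_p}$.

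Combining the two cases, $\mathrm{ind}(A) = \prod_\ell \mathrm{ind}(A_\ell)$ divides $\prod_\ell \ell^{2m_\ell} = e^2 = \mathrm{per}(A)^2$, which is the assertion. The substantive input has already been supplied: the prime-to-$p$ case is Saltman's theorem, and the $p$-case is the specialization $n = 0$ of Theorem 3.6, packaged as Corollary 3.7. The only points needing a word of care — and the closest thing to an obstacle — are the primary decomposition of $A$ combined with multiplicativity of the index over coprime primes, and checking that the hypotheses of Corollary 3.7 (transcendence degree $1$ over a complete discretely valued field of residue characteristic $p$, $p$-rank of the residue field equal to $0$) are indeed satisfied here; both are routine.
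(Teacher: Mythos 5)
Your proof is correct and follows essentially the same route as the paper: the prime-to-$p$ part is handled by Saltman's theorem and the $p$-primary part by the $n=0$ case of Theorem 3.6 (equivalently Corollary 3.7), using that the residue field of a $p$-adic field is finite hence perfect. The paper's own proof is terser, leaving the primary decomposition and the appeal to Saltman implicit, but the substance is identical.
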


\begin{proof}  Let $A$ be a central simple algebra over $F$ of
period a power of $p$. Since the residue field $\kappa$ of $K$ is a
finite field,  $[\kappa : \kappa^p] = 1$. Thus, by (3.6), ind$(A)$
divides per$(A)^2$.  
 \end{proof}

\section{ $u$-invariant}

Let $K$ be a complete discretely valued field with residue field
$\kappa$ and $F$ the  function field of a curve over $K$. 
In this section we compute   the  $u$-invariant of $F$
when $\kappa$ is a perfect field of characteristic 2 and char$(K) =
0$.  

For any field $L$ of characteristic not equal to 2, let $W(L)$ be the
Witt ring of quadratic forms over $L$ and $I^n(L)$ be the $n^{\rm th}$
power of the fundamental ideal $I(L)$ of $W(L)$.   

Let $R$ be an integral domain  with field of fractions $F$. 
A quadratic form $q$ over $R$ is {\it non-singular} if the associated 
quadric is smooth over $R$. 
We say that  
a quadratic form $q$ over $F$ is {\it defined over} $R$ if there exists a non-singular 
quadratic form $q'$ over $R$ such that $q' \otimes_R F \simeq q$.  

In the rest of this section, until (4.7),  $A$ denotes  a complete regular local ring of
dimension two with field of fractions $F$ and residue field $\kappa$.
Suppose that char$(F) = 0 $, char$(\kappa) = 2$ and $\kappa$ is a perfect
field.  Suppose that the maximal ideal $m = (\pi, \delta)$ and $2 =
u_0\pi^i\delta^j$ for some $u_0 \in A^*$  and $i, j \geq 0$.  

\begin{lemma}
Let $A$, $F$, $\kappa$, $m  = (\pi, \delta)$ as above.
Let $\alpha \in H^2(F, \mu_2)$. If $\alpha $ is unramified on $A$ except possibly 
at $(\pi)$ and $(\delta)$. Then $\alpha = (uc, \pi) + (vc\pi^\epsilon, \delta)$ for some 
units $u, v \in A$, $c \in A$ not divisible by $\pi$, $\delta$ and
$\epsilon = 0$ or 1. 
\end{lemma}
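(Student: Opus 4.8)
The plan is to reduce the statement to the vanishing of a class in $\Brt(A)$. Since $\kappa$ is perfect of characteristic $2$ it has $p$-rank $0$, so by Albert's theorem (Corollary 1.2) $\Brt(\kappa) = H^2(\kappa,\mu_2) = 0$; and since $A$ is a complete regular local ring, the argument in the proof of (3.5) shows that a class in $H^2(F,\mu_2) = \Brt(F)$ unramified at every height-one prime of $A$ lies in $\Brt(A)$ by purity ([AG], 7.4) and, having trivial image in $\Brt(\kappa)$, is zero ([C], [KOS]). So it suffices to find units $u,v\in A$, an element $c\in A$ coprime to $\pi\delta$, and $\epsilon\in\{0,1\}$ for which $\beta := \alpha - (uc,\pi) - (vc\pi^{\epsilon},\delta)$ is unramified at every height-one prime of $A$. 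Away from $(\pi)$, $(\delta)$ and the prime divisors of $c$ this is automatic, so the point is to choose the data so that $\beta$ is unramified at $(\pi)$, at $(\delta)$, and at each prime dividing $c$.

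The first step is that $\alpha$ is split by $F(\sqrt{\pi},\sqrt{\delta})$. Put $A' = A[\sqrt{\pi},\sqrt{\delta}]\subset F' = F(\sqrt{\pi},\sqrt{\delta})$; by (3.1)--(3.3) this is again a complete regular local ring of dimension $2$ with residue field $\kappa$, and $\alpha\pullback_{F'}$ is unramified on $A'$ outside the primes $(\sqrt{\pi})$, $(\sqrt{\delta})$ over $(\pi)$, $(\delta)$. That it is split there is checked by completing and applying Theorem 2.4 --- to the residue field or to the completion of $F$, according as the residue characteristic is $\neq 2$ or $=2$ --- using that $\kappa$ is perfect; by (3.4), $\alpha\pullback_{F'}$ is then unramified on all of $A'$, so $\alpha\pullback_{F'}\in\Brt(A')\subsetto\Brt(\kappa) = 0$. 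As is standard, a $2$-torsion Brauer class killed by a biquadratic extension is a sum of symbols attached to the intermediate quadratic extensions, so $\alpha = (f,\pi) + (g,\delta)$ for some $f,g\in F^*$. Since the regular local ring $A$ is a unique factorization domain we may write $f = \pi^{a}\delta^{b}f_0$ and $g = \pi^{a'}\delta^{b'}g_0$ with $f_0,g_0\in A$ coprime to $\pi\delta$; absorbing the resulting multiples of $(\pi,\delta)$, $(-1,\pi)$ and $(-1,\delta)$ into units and into a single power of $\pi$ gives $\alpha = (u_1 f_0,\pi) + (v_1 g_0\pi^{\epsilon_0},\delta)$ with units $u_1,v_1\in A$ and $\epsilon_0\in\{0,1\}$.

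It remains to merge $f_0$ and $g_0$ into one element $c$. For this I would examine the residues $\partial_{(\pi)}(\alpha)$ and $\partial_{(\delta)}(\alpha)$, read through Kato's filtration $\br(\cdot)_\bullet$ and the maps $\rho_i$ of \S2 when the residue characteristic is $2$ (that is, $\pi\mid 2$ or $\delta\mid 2$), as in the proof of Proposition 2.2, and as ordinary tame residues otherwise. Unramifiedness of $\alpha$ away from $(\pi)$, $(\delta)$ gives two kinds of constraints: the reciprocity at the closed point $\mm=(\pi,\delta)$ built into Kato's residue complex forces the exponents with which $\overline{\delta}$, resp.\ $\overline{\pi}$, occur in the two residues to agree, yielding a well-defined $\epsilon$; and vanishing of the residue of $\beta$ along the prime divisors of $c$ forces the divisor of $c$ (assembled from the two residues) to lie where $\overline{\pi\delta}$ is a square. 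Using the isomorphism $A/(\pi\delta)\isom A/(\pi)\times_{A/\mm} A/(\delta)$ coming from the regular sequence $\pi,\delta$, I would lift compatible representatives of the two residues to a single $c\in A$ coprime to $\pi\delta$ and adjust $u,v,\epsilon$ so that $\beta$ is unramified at $(\pi)$, at $(\delta)$ and at the divisors of $c$; by the first paragraph $\beta = 0$, which is the claimed identity.

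I expect the main obstacle to be this last step: running the residue analysis and the matching over the two-dimensional ring $A$, especially in the wildly ramified case $\pi\mid 2$ or $\delta\mid 2$, where the residue of $\alpha$ is not a class in $\kappa(P)^*/\kappa(P)^{*2}$ but is only recorded by the filtration $\br(\cdot)_i$. Producing the common $c$ is then a two-variable analogue of Proposition 2.2, and the delicate point is controlling the descent of the local symbol decompositions from the completions of $F$ at $(\pi)$ and at $(\delta)$ down to elements of $A$.
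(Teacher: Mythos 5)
Your opening reduction and your second paragraph do track the start of the paper's argument: (3.5) gives the splitting of $\alpha$ over $F(\sqrt{\pi},\sqrt{\delta})$, Albert's theorem on biquadratic splitting gives $\alpha=(a,\pi)+(b,\delta)$, and the normalization to square-free $a,b\in A$ with $\pi\nmid a$ and $\delta\nmid b$ is exactly what the paper does. But the entire content of the lemma is the step you postpone to your third paragraph and yourself flag as the main obstacle, and what you sketch there is a plan, not a proof. Moreover the plan points in a direction the paper deliberately avoids: the paper never computes the residues of $\alpha$ at $(\pi)$ or $(\delta)$, so the wild (Kato-filtration) residues you propose to analyze and glue through $A/(\pi\delta)\cong A/(\pi)\times_{A/m}A/(\delta)$ never enter. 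The ``reciprocity at the closed point'' you invoke is Kato's complex (4.3); it constrains the two residues but does not by itself manufacture the common element $c$, and when $\pi\mid 2$ the residue of a symbol $(f_0,\pi)$ is not read off from the divisor of $f_0$ in any way that your gluing step could use.

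The missing idea is the following. Writing $a=ca_1\delta^{\epsilon_1}$ and $b=cb_1\pi^{\epsilon_2}$ with $c$ the common square-free factor, $a_1,b_1$ coprime, and $\pi\delta\nmid ca_1b_1$, one removes the primes of $a_1$ (and symmetrically of $b_1$) one at a time. If $\theta$ is a prime dividing $a_1$, then $\theta\nmid cb_1\pi\delta$, and since $2=u_0\pi^i\delta^j$ the residue field $\kappa(\theta)$ has characteristic different from $2$; the residue of $\alpha$ at $\theta$ is therefore the tame residue of $(a,\pi)$, namely the class of $\overline{\pi}$ in $\kappa(\theta)^*/\kappa(\theta)^{*2}$, and unramifiedness of $\alpha$ at $\theta$ forces $\overline{\pi}$ to be a square in $\kappa(\theta)$. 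Hence $\theta$ splits in the regular local ring $B=A[\sqrt{\pi}]$, which is a UFD, so a prime of $B$ above $\theta$ is principal, say $(\eta)$, and $N_{F(\sqrt{\pi})/F}(\eta)=u\theta$ for some unit $u\in A$. Then $(u\theta,\pi)=0$, so $(a,\pi)=(au\theta,\pi)=(ca_2\delta^{\epsilon_1}u,\pi)$ where $a_1=\theta a_2$, and induction on the number of prime factors of $a_1$ yields $(a,\pi)=(uc\delta^{\epsilon_1},\pi)$ and likewise $(b,\delta)=(vc\pi^{\epsilon_2},\delta)$; a final identity $(\delta,\pi)+(vc\pi^{\epsilon_2},\delta)=(vc\pi^{\epsilon_2+1},\delta)$ absorbs $\delta^{\epsilon_1}$ and produces the exponent $\epsilon$. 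Without this (or some completed substitute for your residue-gluing), the proposal establishes only that $\alpha=(u_1f_0,\pi)+(v_1g_0\pi^{\epsilon_0},\delta)$ with possibly unrelated $f_0$ and $g_0$, which is strictly weaker than the statement.
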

 
 \begin{proof}
 Since $\alpha$ is unramified except at $(\pi)$ and $(\delta)$ and $
\kappa$ is perfect, by (3.5), $\alpha \otimes F(\sqrt{\pi},
\sqrt{\delta})$ is zero.  In particular,  by a theorem of Albert,
$\alpha = (a, \pi) + (b, \delta)$ for   some $a, b \in F^*$. Without
loss of generality we assume that $a, b  \in A$ and square free. 
Since $(-d, d) = 0$ for any $d \in F^*$, we assume that  $\pi$ does
not divide $a$ and $\delta$ 
does not divide $b$. Since $A$ is a   regular local ring,  it is a
unique factorisation domain ([AB]). We write  $a = ca_1\delta^{\epsilon_1}$ and $b =
cb_1\pi^{\epsilon_2}$ with $c, a_1, b_1 \in A$  square free,  $a_1$
and $b_1$ are coprime, $\pi$ and $\delta$ do
not divide $ca_1b_1$ and $0 \leq \epsilon_1, \epsilon_2 \leq 1$.

Let $\theta$ be a prime in $A$ which divides $a_1$. Write $a_1 = \theta
a_2$.  Then $\theta$ does  not
divide $cb_1\pi\delta$.  In particular, the characteristic of the
residue field $\kappa(\theta)$ at $\theta$ is not equal to 2 and
$\alpha$ is unramified at $\theta$.  Since the residue of $\alpha$ 
 at $\theta$ is the image $\overline{\pi}$ of $\pi$ in
$\kappa(\theta)/\kappa(\theta)^{*2}$,  $\overline{\pi}$ is a square in
$\kappa(\theta)$. Let $L = F[\sqrt{\pi}]$ and $B =
A[\sqrt{\pi}]$. Then $B$ is a regular local ring of dimension 2
(cf. (3.2)) and hence a unique factorisation domain ([AB]). 
Since $\overline{\pi}$ is a square in $\kappa(\theta)$
and  char$(\kappa(\theta)) \neq 2$, we have $\theta B = Q_1Q_2$  with 
$Q_1$ and $Q_2$ two distinct  prime ideals of $B$.   In particular $N_{L/F}(Q_1)
= \theta A$.  
Since  $B$ is a unique factorisation domain,  $Q_1 = (\eta)$ for some
$\eta \in B$ and hence there exists a unit $u \in A$ such that   
$N_{L/F}(\eta) = u\theta$. We have 
$$
\begin{array}{rcl}
(a, \pi)  & = & (au\theta, \pi)  \\
& = &  (ca_1\delta^{\epsilon_1}u\theta, \pi) \\
& = & (c\theta a_2\delta^{\epsilon_1}u\theta, \pi) \\
& = & (ca_2\delta^{\epsilon_1}u, \pi).
\end{array}
$$
Thus by induction on the number of primes dividing $a_1$, we conclude
that $(a, \pi) = (uc\delta^{\epsilon_1}, \pi)$ for some unit $u \in
A$.
Similarly $(b, \delta) = (vc\pi^{\epsilon_2}, \delta)$ for some unit
$v \in A$. Thus we have $\alpha =  (uc\delta^{\epsilon_1}, \pi) + 
(vc\pi^{\epsilon_2}, \delta)$. Suppose that $\epsilon_1 = 1$. Then 
$$
\begin{array}{rcl}
 \alpha & = &  (uc\delta, \pi) +  (vc\pi^{\epsilon_2}, \delta) \\
& = & (uc, \pi) + (\delta, \pi) +  (vc\pi^{\epsilon_2}, \delta) \\
& = & (uc, \pi) + (vc\pi^{\epsilon_2 + 1}, \delta) \\
& = & (uc, \pi) + (vc\pi^{\epsilon}, \delta),
 \end{array}
$$
where $\epsilon = \epsilon_2 + 1$ (mod 2).
\end{proof}

For any field $K$ and  $i
\geq 1$,  let $H^i_2(F)$ be the Kato cohomology groups ([K2], \S 0).
If char$(K) \neq 2$, we have $H^i_2(F) =   H^i(F, \mu_2)$.  If
char$(K) = 2$, we have $H^2_2(K) = \Brt(K)$ and $H^1_2(K) 
=  H^1( K, \mathbf{Z}/2\mathbf{Z})$.  For $a \in K^*$, let $[a) \in H^1_2(K)$ be the
element defined by $ K[X]/
 (X^2 + X + a)$.  Note that   $ [a) $ is $ K \times K$ or  a
 separable extension of $K$ of degree  2.  Let $b \in K$. Let $[a)
 \cdot (b)$ be the quaternion  algebra over $K$ generated by $i$ and
 $j$ with $i^2 + i + a = 0$, $j^2 = b$ and $ji = -(1+ i) j$. 

Let $A$, $F$, $\kappa$ be as above.  
Let $\theta \in A$ be a prime.
Suppose
$\theta$ does not divide $2 = u_0\pi^i\delta^j$. Then the characteristic of the
residue field $\kappa(\theta)$ at $\theta$ is 0.  Suppose $\theta$
divides 2. Then $(\theta) = (\pi)$ or $(\theta) = (\delta)$ and $A/(\theta)$ is a
complete discrete valuation ring. 
Since the residue field $\kappa$ of $A$ is a perfect field, we have 
$[\kappa(\theta) : \kappa(\theta)^2] = 2$.
By ([K2], \S 1), we
have residue homomorphisms $\partial_\theta : H^3(F, \mu_2) \to
H^2_2(\kappa(\theta)) \simeq  \Brt(\kappa(\theta))$ and $\partial :
H^2_2(\kappa(\theta)) \to H^1_2(\kappa)$.   

\begin{lemma}
(cf. [Su], 1.1) Let $A$, $F$, $\kappa$, $m = (\pi, \delta)$ be as above. Then, for any 
unit $u \in A^*$,    $\partial_\pi( [u) \cdot (\delta) \cdot (\pi ))=
[\overline{u}) \cdot (\overline{\delta})$ and  $\partial ([\overline{u}) \cdot 
(\overline{\delta}) ) = [\overline{\overline{u}})$, where for any $a \in A$, $\overline{a}$ denotes the image 
modulo $\pi$ and  $\overline{\overline{a}}$ denotes the image modulo $m$. 
\end{lemma}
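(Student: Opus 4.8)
The plan is to reduce at once to the completion $F_\pi$ of $F$ at the discrete valuation $v_\pi$ attached to the height‑one prime $(\pi)$, since the residue map $\partial_\pi$ on $H^3(F,\mu_2)$ is compatible with this completion. The valuation ring $\widehat{A}_{(\pi)}$ is a complete discrete valuation ring with uniformizer $\pi$ and residue field $\kappa(\pi)=\Frac(A/(\pi))\cong\kappa((\overline{\delta}))$, using $A/(\pi)\cong\kappa[[\overline{\delta}]]$; since $\pi$ divides $2$ this residue field has characteristic $2$, and as $\kappa$ is perfect $[\kappa(\pi):\kappa(\pi)^2]=2$, so Kato's groups $H^i_2$ and his residue maps are available in the relevant degrees ([K2], \S1). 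Both asserted identities will then follow from the single principle that, for a complete discretely valued field with uniformizer $t$ and a class $\beta$ that is unramified at $t$ (i.e. defined over the valuation ring), one has $\partial_t(\beta\cdot(t))=\mathrm{sp}(\beta)$, the specialization of $\beta$ to the residue field; this is the basic computation underlying the residue maps of [K2], \S1, and matches the classical formula when the residue characteristic is not $2$. The remaining work is to identify the unramified classes together with their specializations.

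For the first identity I would first note that $[u)\cdot(\delta)$ is unramified at $\pi$. Since $\mathrm{char}(F)=0$, completing the square identifies the quaternion algebra $[u)\cdot(\delta)$ with $(1-4u,\delta)$: in its presentation by $i,j$ with $i^2+i+u=0$, $j^2=\delta$, $ji=-(1+i)j$, the element $i':=2i+1$ satisfies $i'^2=1-4u$ and $ji'=-i'j$. As $\pi\mid 2$, the unit $u$ gives $1-4u\equiv 1\pmod\pi$, in particular $1-4u\in\widehat{A}_{(\pi)}^{*}$; also $\delta\in\widehat{A}_{(\pi)}^{*}$ because $\pi\nmid\delta$. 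Thus $(1-4u,\delta)$ extends over $\widehat{A}_{(\pi)}$; equivalently $\widehat{A}_{(\pi)}[X]/(X^2+X+u)$ is étale over $\widehat{A}_{(\pi)}$ (its discriminant $1-4u$ is a unit). Reducing the corresponding $\widehat{A}_{(\pi)}$‑order generated by $i,j$ modulo $\pi$ and using $-1=1$ in $\kappa(\pi)$ yields exactly the characteristic‑$2$ quaternion algebra $[\overline{u})\cdot(\overline{\delta})$. Hence $[u)\cdot(\delta)$ is unramified at $\pi$ with specialization $[\overline{u})\cdot(\overline{\delta})\in H^2_2(\kappa(\pi))$, and the principle above gives $\partial_\pi\big([u)\cdot(\delta)\cdot(\pi)\big)=[\overline{u})\cdot(\overline{\delta})$.

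For the second identity everything takes place over the equal‑characteristic‑$2$ field $\kappa(\pi)\cong\kappa((\overline{\delta}))$, complete with uniformizer $\overline{\delta}$ and residue field $\kappa$, and $\partial$ is its residue map $H^2_2(\kappa(\pi))\to H^1_2(\kappa)$. Since $\overline{u}$ is a unit of $\kappa[[\overline{\delta}]]$, the class $[\overline{u})\in H^1_2(\kappa(\pi))$ is unramified: $\kappa[[\overline{\delta}]][Y]/(Y^2+Y+\overline{u})$ is étale over $\kappa[[\overline{\delta}]]$ (separable in characteristic $2$, the derivative of $Y^2+Y+\overline{u}$ being $1$), with residue extension $[\overline{\overline{u}})$ over $\kappa$. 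Therefore $[\overline{u})\cdot(\overline{\delta})$ is again an unramified class cup the uniformizer, and the principle gives $\partial\big([\overline{u})\cdot(\overline{\delta})\big)=[\overline{\overline{u}})$, as asserted.

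The one delicate point is the mixed‑characteristic step in the first identity: a priori $[u)$ is a Kummer class over the characteristic‑$0$ field $F$, yet it must specialize to an Artin--Schreier class over the characteristic‑$2$ field $\kappa(\pi)$. The naive integral model of $(1-4u,\delta)$, the $\widehat{A}_{(\pi)}$‑order generated by $x,y$ with $x^2=1-4u$, $y^2=\delta$, $xy=-yx$, is \emph{not} Azumaya — modulo $\pi$ the element $x-1$ is a central nilpotent. Passing via $x=2i+1$ (legitimate since $2$ is invertible in $F$) to the Artin--Schreier presentation with $i^2+i+u=0$, $j^2=\delta$, $ji=-(1+i)j$ repairs this: the $\widehat{A}_{(\pi)}$‑order generated by $i,j$ is Azumaya and reduces to $[\overline{u})\cdot(\overline{\delta})$. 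With that model in hand the residue computation is the standard one ([K2], \S1), and the argument parallels [Su, 1.1]; the second identity, being in equal characteristic, is routine.
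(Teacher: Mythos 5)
Your computational core is right and is essentially the same input the paper uses: the order of $A_{(\pi)}$ generated by $i,j$ with $i^2+i+u=0$, $j^2=\delta$, $ji=-(1+i)j$ is Azumaya, so $[u)\cdot(\delta)$ is unramified at $\pi$ and its reduction is $[\overline{u})\cdot(\overline{\delta})$ (the paper invokes exactly this when it argues via reduced norms). But the step that carries all the weight --- the ``principle'' that $\partial_t(\beta\cdot(t))=\mathrm{sp}(\beta)$ for an unramified class $\beta$ --- is asserted, not proved, and in the wild case ($\ell=2$ equal to the residue characteristic) it is precisely the delicate point. Kato's residue maps in [K2], \S 1 are not handed to you by that formula in mixed characteristic; identifying them with ``specialize the unramified part and forget the uniformizer'' is the content one must establish. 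The paper is structured to avoid ever needing this explicit description: it uses only that $\partial_\pi$ and $\partial$ are \emph{isomorphisms} ([K2], Lemma 1.4(3)), shows the relevant classes are non-trivial (via the unramified quadratic extension $\kappa(\pi)'$ and the reduced-norm argument), and then pins down the value of $\partial\circ\partial_\pi$ as the unique non-trivial element of $H^1_2(\kappa)$ killed by restriction to $\kappa'=\kappa[X]/(X^2+X+\overline{\overline{u}})$, using functoriality along $L=F[X]/(X^2+X+u)$. Your proof becomes complete only once you supply a proof, or a precise citation covering the wild case, for the specialization formula; as written, that is a genuine gap rather than a routine reference.

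A second, more concrete gap is the standing assumption $\pi\mid 2$ that you make from the outset ($A/(\pi)\cong\kappa[[\overline{\delta}]]$, ``$-1=1$ in $\kappa(\pi)$'', ``$1-4u\equiv 1\bmod \pi$''). The hypotheses only give $2=u_0\pi^i\delta^j$ with $i,j\geq 0$, and the lemma is applied in (4.5) to \emph{both} generators $\pi$ and $\delta$ of $m$, so the case $i=0$ must be covered. There $\kappa(\pi)$ has characteristic $0$: the first identity becomes the classical tame residue (easy), but the second identity $\partial([\overline{u})\cdot(\overline{\delta}))=[\overline{\overline{u}})$ becomes a mixed-characteristic residue from the characteristic-$0$ field $\kappa(\pi)$ down to the characteristic-$2$ field $\kappa$, and your equal-characteristic argument via $\kappa[[\overline{\delta}]][Y]/(Y^2+Y+\overline{u})$ does not apply; you would again need the wild specialization formula, one degree lower. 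The paper's indirect argument is uniform in the two cases, which is presumably why it was chosen.
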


\begin{proof}  Suppose that $[\overline{\overline{u}}) $ is trivial in $H^1_2(\kappa)$.
Since $u$ is a unit  in $A$ and $A$ is complete, $[u) $ is trivial in $H^1(F, \mu_2)$.
In particular $[u) \cdot (\pi) \cdot (\delta)$ and $[\overline{u}) \cdot (\overline{\delta})$ are trivial. 

Suppose that $[\overline{\overline{u}})$ is non-trivial.  Let $\kappa' = \kappa[X]/(X^2 + x + 
\overline{\overline{u}})$. Then $\kappa'$ is a separable quadratic extension of $\kappa$
and $[\overline{\overline{u}})$ is the only  non-trivial element of the 
kernel of the restriction homomorphism from $H^1_2(\kappa)$ to $H^1_2(\kappa')$. 

Let $\kappa(\pi)' = \kappa(\pi)[X]/(X^2 + X + \overline{u})$. Then $\kappa(\pi)'$ is 
a complete discretely valued field with residue field $\kappa'$ and $\overline{\delta}$ 
as a parameter. Thus $\kappa(\pi)'/\kappa(\pi)$ is unramified and  $\overline{\delta}  \in \kappa(\pi)$
 is not a norm from $\kappa(\pi)'$ and hence
$[\overline{u}) \cdot (\overline{\delta})$ is non-trivial.  
Since $\partial$ is an isomorphism ([K2], Lemma 1.4(3)), 
 $\partial ([\overline{u}) \cdot (\overline{\delta}))$ is non-trivial in $H^1_2(\kappa)$. 
Since $[\overline{u}) \cdot 
(\overline{\delta})$ is trivial over $\kappa(\pi)'$, by the functoriality of $\partial$, 
the image of $\partial([\overline{u}) \cdot (\overline{\delta}))$ in $H^1_2(\kappa')$ is trivial. 
Since the only non-trivial  element of the 
kernel of the restriction homomorphism from $H^1_2(\kappa)$ to $H^1_2(\kappa')$ is $[\overline{\overline{u}})$,
$\partial([\overline{u}) \cdot (\overline{\delta}) ) =  [\overline{\overline{u}})$. 
  
 Let $F_\pi$ be  the completion  of $F$ at $\pi$.
  Since  $u$  and $\delta$ are units at $\pi$, $[u) \cdot (\delta)$ is a quaternion algebra defined over 
$A_{(\pi)}$.  If $\pi$ is a reduced norm from $[u) \cdot (\delta)$ over $F_\pi$,  $[\overline{u}) \cdot (\overline{\delta})$ 
is a split algebra over $\kappa(\pi)$,  contradicting the non-triviality of $[\overline{u}) 
\cdot (\overline{\delta})$ in $H^2_2(\kappa(\pi))$.
Hence  $\pi$  is not a reduced norm  form of the quaternion algebra $([u) \cdot (\delta)) \otimes_F F_\pi$
and  $[u) \cdot (\delta) \cdot (\pi)$ is non-trivial in $H^3(F_\pi, \mu_2)$.
 Let $L = F [X]/(X^2 + X + u)$.  Let $B$ be the integral closure of $A$ in $L$. Then $B$ is a complete 
 regular local ring with maximal ideal $(\pi, \delta)$ and residue field $\kappa'$. 
   Since the image of $[u) \cdot (\delta) \cdot (\pi)$ 
in $H^3(L, \mu_2)$ is zero,  by the functoriality of the residue 
homomorphisms,  the image of  $\partial(\partial_\pi([u) \cdot (\delta) \cdot (\pi )))$ in $H^1_2(\kappa')$
is zero.  Since $[u) \cdot (\delta) \cdot (\pi)$ is non-trivial in $H^3(F_\pi, \mu_2)$ and 
$\partial_\pi : H^3(F_\pi,  \mu_2) \to H^2_2(\kappa(\pi))$ and $\partial : H^2_2(\kappa(\pi)) \to H^1_2(\kappa)$
are isomorphisms ( [K2], Lemma 1.4(3)),  $\partial(\partial_\pi([u) \cdot (\delta) \cdot (\pi ))$ is non-trivial
and hence equal to $[\overline{\overline{u}})$.  
 Since $\partial ([\overline{u}) \cdot (\overline{\delta}) = [\overline{\overline{u}}) $ and
 $\partial$ is an isomorphism, we have  $\partial_\pi([u) \cdot (\delta) \cdot (\pi ) ) =
[\overline{u}) \cdot (\overline{\delta})$. 
\end{proof}

The following is a result of Kato ([K2], 1.7)

 \begin{prop} 
 Let $A$, $F$ and $\kappa$ be as
above. Then   
$$
H^3(F, \mu_2) \buildrel{\oplus\partial_\gamma}\over{\to}
{\displaystyle{\oplus_{\gamma \in {\rm Spec}(A) ^{(1)}}}}
H^2_2(\kappa(\gamma))  \buildrel{\sum\partial_\gamma}\over{\to}
H^1_2(\kappa) 
$$ 
is a complex. 
\end{prop}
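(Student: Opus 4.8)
The statement is an instance of Kato's Gersten-type complex for excellent, regular, two-dimensional local rings, so the most economical argument is to check that $A$ satisfies the hypotheses — it is complete, hence excellent, regular, local, of dimension $2$, with residue field $\kappa$ — and to quote ([K2], 1.7): the three groups displayed are exactly the contributions of the generic point, of the height-one primes, and of the unique closed point of $\mathrm{Spec}(A)$. I also indicate how the complex condition can be verified directly from the residue formulas already established, at the cost of invoking that $H^{3}$ is generated by symbols.

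The hands-on plan is as follows. Since $\mathrm{char}(F)=0$ we have $H^{3}(F,\mu_{2})=H^{3}(F,\mathbb{Z}/2)$, which is generated by cup products $(a)\cup(b)\cup(c)$ with $a,b,c\in F^{*}$, and all the maps in the statement are additive, so it suffices to show that $\sum_{\gamma}\partial_{x}\partial_{\gamma}$ kills each such symbol, where $x$ is the closed point and $\gamma$ runs over $\mathrm{Spec}(A)^{(1)}$. Because $A$ is regular local it is a unique factorization domain; writing each entry of the symbol as a unit times a product of prime elements and using multilinearity of the cup product together with the Steinberg relations, one reduces to symbols each of whose three slots is a unit of $A$ or a single prime. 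Such a symbol is unramified at every height-one prime dividing none of its slots, and the only height-one primes whose residue field has characteristic $2$ lie among $(\pi)$ and $(\delta)$, since these are the only primes that can divide $2=u_{0}\pi^{i}\delta^{j}$. At a prime $\gamma$ with $\mathrm{char}\,\kappa(\gamma)=2$ — so $\gamma$ is $(\pi)$ or $(\delta)$ — the residue $\partial_{\gamma}$ is a mixed-characteristic residue and $\partial_{x}\partial_{\gamma}$ on the relevant symbols is computed by the residue formula (4.2) together with its mirror obtained by interchanging $\pi$ and $\delta$; at the remaining primes one uses the ordinary tame residue. Since $H^{1}_{2}(\kappa)$ is $2$-torsion, the resulting contributions cancel: for the basic symbol $[u)\cdot(\delta)\cdot(\pi)$, for instance, both $\partial_{x}\partial_{(\pi)}$ and $\partial_{x}\partial_{(\delta)}$ return $[\overline{\overline{u}})$, so that $\partial_{x}\partial_{(\pi)}+\partial_{x}\partial_{(\delta)}=2\,[\overline{\overline{u}})=0$; running over the finitely many shapes of reduced symbol in this way yields the complex condition.

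I expect the main obstacle to be the mixed-characteristic residue itself: $\partial_{(\pi)}$ on a mod-$2$ class is not the naive tame symbol but is controlled by Kato's description of $H^{2}_{2}$ of a characteristic-$2$ field through differential forms, and (4.2) is precisely the input that makes the two boundary contributions at the closed point explicit enough to exhibit their cancellation; it is also what forces the symbol normalization used above. Once that normalization is available the cancellation is nothing more than the $2$-torsion of $H^{1}_{2}(\kappa)$. (Kato's own proof of ([K2], 1.7) proceeds along different lines and does not require that $H^{3}$ be generated by symbols.)
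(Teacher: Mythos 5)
The paper gives no proof of this proposition at all: it simply attributes the statement to Kato, citing [K2], 1.7, which is exactly your primary argument (checking that $A$ is a complete, hence excellent, regular two-dimensional local ring and quoting Kato's Gersten-type complex). Your supplementary symbol-by-symbol verification goes beyond what the paper records and is not needed for the match, so the proposal is correct and takes essentially the same approach.
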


We define $H^3_{nr}(F/A, \mu_2)$ to be the kernel of residue
homomorphism 
$$ H^3(F, \mu_2)
\buildrel{\oplus\partial_\gamma}\over{\to}   {\displaystyle{\oplus_{\gamma \in Spec(A)^{(1)}}}}
H^2_2(\kappa(\gamma))).$$

\begin{prop}Let $A$, $F$ and $\kappa$ be as above.  
Then $H^3_{nr}(F/A, \mu_2) = 0$. 
\end{prop}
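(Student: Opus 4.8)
The plan is to show that any $\alpha \in H^3_{nr}(F/A,\mu_2)$ must be the trivial class. The starting point is the observation that $H^3_{nr}(F/A,\mu_2)$ is contained in $H^3(F,\mu_2)$, but more importantly that $\alpha$ is unramified at \emph{every} height one prime of the complete regular local ring $A$ of dimension $2$. Following the structure of the earlier results, I would first reduce to the case where $\alpha$ is split after adjoining square roots of $\pi$ and $\delta$: since $\kappa$ is perfect, Proposition (3.5) (applied to the symbol/quaternion pieces of $\alpha$, or rather to the full class treated via its decomposition over $F(\sqrt{\pi},\sqrt{\delta})$) shows that $\alpha \otimes_F F(\sqrt{\pi},\sqrt{\delta})$ vanishes. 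By a theorem of Arason (the mod-$2$ analogue of Albert's decomposition used in (4.1)), together with the structure of $H^3$, this forces $\alpha$ to be a sum of symbols of the shape $(a)\cdot(b)\cdot(\pi)$, $(a)\cdot(b)\cdot(\delta)$, $(a)\cdot(\pi)\cdot(\delta)$, and a piece unramified everywhere on $A$; the first three classes have prescribed ramification along $(\pi)$ and $(\delta)$.

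The main computation is then to use the explicit residue formulas of Lemma (4.2) together with the Kato complex of Proposition (4.3). Since $\alpha$ lies in $H^3_{nr}(F/A,\mu_2)$, all its residues $\partial_\gamma(\alpha)$ vanish, in particular $\partial_\pi(\alpha) = 0$ and $\partial_\delta(\alpha) = 0$. I would write $\alpha$ in a normalized form analogous to (4.1)—adapted from quaternion algebras to degree-$3$ classes—say $\alpha = [u)\cdot(\delta)\cdot(\pi) + \beta$ where $\beta$ is unramified at $(\pi)$, and then read off from $\partial_\pi(\alpha) = 0$ and Lemma (4.2) that $[\overline{u})\cdot(\overline{\delta}) = 0$ in $H^2_2(\kappa(\pi))$, which (because $\kappa(\pi) \simeq \kappa((\overline\delta))$ is complete discretely valued with residue field the perfect field $\kappa$, and $\partial$ is an isomorphism) forces $[\overline{\overline u}) = 0$, hence $[u)$ trivial over $F$ by completeness of $A$; so this term drops. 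Iterating this cleanup for the $(\delta)$-ramified and $(\pi)(\delta)$-ramified pieces, $\alpha$ becomes genuinely unramified on all of $\mathrm{Spec}(A)^{(1)}$, i.e.\ it lies in the image of $H^3(B,\mu_2)$ for a suitable regular cover $B$, or better: it is unramified on the regular local ring $A$ itself. Purity/injectivity for $H^3$ of a regular local ring (the Bloch–Ogus–Gabber type statement, or in this setting the fact that $H^3_{nr}$ over a complete regular local ring with perfect-related residue data vanishes because it injects into $H^3$ of the residue field, which here has cohomological $2$-dimension at most one) then kills $\alpha$.

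To make the last step self-contained I would instead argue directly: after the cleanup, $\alpha$ is unramified on $A$, so by purity it comes from $H^3$ of the $2$-dimensional regular local ring $A$, and by the completeness of $A$ together with the fact that the residue field $\kappa$ is perfect of characteristic $2$ (hence $H^3(\kappa,\mu_2) = H^3_2(\kappa) = 0$ since $\mathrm{cd}_2$ of a perfect field of characteristic $2$ is $0$, and likewise $H^2_2(\kappa) = \Brt(\kappa)$ and $H^1_2(\kappa)$ control the deviation), specialization shows $\alpha$ maps into $H^3$ of successive residue fields and ultimately into $H^3_2(\kappa) = 0$; a standard dévissage (reduce mod $\pi$, then mod $\delta$, using that $A/(\pi)$ and its quotient are complete discretely valued) then gives $\alpha = 0$.

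\textbf{Expected main obstacle.} The delicate point is the very first reduction: writing a general class in $H^3(F,\mu_2)$ that is split over $F(\sqrt\pi,\sqrt\delta)$ in the normalized symbolic form with controlled ramification, and correctly handling the characteristic-$2$ subtleties (the distinction between $H^3(F,\mu_2)$ and the Kato group $H^3_2$, the role of the wildly ramified primes dividing $2 = u_0\pi^i\delta^j$, and the logarithmic differential description of residues along those primes). Once the class is in the shape covered by Lemma (4.2), the residue bookkeeping via the Kato complex (4.3) is routine; but getting it into that shape, and ensuring the "leftover" everywhere-unramified piece genuinely vanishes by a purity argument valid in mixed characteristic with imperfect intermediate residue fields, is where the real work lies.
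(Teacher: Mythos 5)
Your argument has a genuine gap at exactly the point you flag as ``where the real work lies,'' and the preceding steps do not help close it. First, the normalization/cleanup phase is vacuous: a class in $H^3_{nr}(F/A,\mu_2)$ has \emph{all} residues zero by definition, so there are no ramified symbol pieces to strip off, and in any case no Albert/Arason-type decomposition theorem is available for degree-$3$ classes split by $F(\sqrt{\pi},\sqrt{\delta})$ in this generality. What remains is precisely the hard statement: an everywhere-unramified class in $H^3(F,\mu_2)$ over a two-dimensional complete regular local ring of mixed characteristic $(0,2)$ vanishes. You invoke ``purity'' (Bloch--Ogus--Gabber) to say such a class comes from $H^3_{\et}(A,\mu_2)$ and then specializes into $H^3$ of the residue field; but purity/Gersten for \'etale cohomology with $\mu_2$-coefficients is not available in mixed characteristic $(0,2)$ in the setting of this paper, so this step is unjustified, and the concluding ``d\'evissage mod $\pi$ then mod $\delta$'' is likewise not an argument.

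The paper's route avoids cohomological purity entirely by passing to quadratic forms, where the needed purity \emph{is} a theorem. Since $\mathrm{cd}_2(F)\leq 3$ (Gabber--Orgogozo), one has $I^4(F)=0$ and $e_3:I^3(F)\to H^3(F,\mu_2)$ is an isomorphism, so a nonzero $\zeta\in H^3_{nr}(F/A,\mu_2)$ is $e_3(q)$ for an anisotropic $q\in I^3(F)$ of rank $\geq 8$. For each height-one prime $\theta$ of $A$, the Kato residue $\partial_\theta:H^3(F_\theta,\mu_2)\to H^2_2(\kappa(\theta))$ is an isomorphism (using $H^3_2(\kappa(\theta))=0$), so unramifiedness forces $q$ to be hyperbolic over each $F_\theta$; then Ojanguren's splitting theorem and Colliot-Th\'el\`ene--Sansuc extend $q$ to a non-singular form $q'$ over the two-dimensional regular ring $A$. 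Finally $u(\kappa)\leq 2$ for $\kappa$ perfect of characteristic $2$ makes $q'\otimes_A\kappa$ isotropic, and Hensel's lemma lifts isotropy to $A$, contradicting anisotropy of $q$. This translation to $I^3$ and the quadratic-form purity input are the essential ideas missing from your proposal.
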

 
 \begin{proof}
  Since cd$_2(F) \leq 3$ ([GO]),   $I^4(F) = 0$
([AEJ], Cor.2. p.653) and $e_3 : I^3(F) \to
H^3(F, \mu_2)$ is an isomorphism ([AEJ], Thm. 2. p.653).  Let $\zeta \in H^3_{nr}(F/A,
\mu_2)$. Suppose that $\zeta \neq 0$.  
Let $q $ be an anisotropic quadratic form over $F$ such that
$q \in I^3(F)$ and   $e_3(q) = \zeta$. 
Let $\theta \in A$ be a prime and $F_\theta$ be the completion of $F$
at $\theta$.  Since $A/(\theta)$ is a complete local ring of dimension
one with residue field perfect of characteristic 2,
$H^3_2(\kappa(\theta)) = 0$ ([GO]). Suppose that  char$(\kappa(\theta)) \neq 2$.
Since $H^3(\kappa(\theta), \mu_2) = H^3_2(\kappa(\theta)) = 0$,  
$\partial_\theta : H^3(F_\theta, \mu_2) \to H^2_2(\kappa(\theta)) $
is an isomorphism. Suppose that  char$(\kappa(\theta)) = 2$. Since the $2$-rank of 
$\kappa(\theta)$ is 1, by ([K2], Lemma 1.4(3)),  $\partial :
H^3(F_\theta,  \mu_2) \to H^2_2(\kappa(\theta))$ is an
isomorphism. Since $\zeta \in H^3_{nr}(F/A, \mu_2)$, the image of
$\zeta$ in $H^3(F_\theta, \mu_2)$ is zero.  In particular,  $q$
is hyperbolic over $F_\theta$. 
Thus $q$  comes from a non-singular  quadratic form 
over the localisation $A_{(\theta)}$ of $A$ at the prime ideal
$(\theta)$ (cf. [O], Thm,8). Since $A$ is a two
dimensional regular ring,   there exists a non-singular
quadratic forn $q'$ over $A$ such that $q' \otimes _A F \simeq
q$ ([CTS], Cor.2.5, cf. [APS], 4.2 ).  

Since $q \in I^3(F)$ and $q$ is anisotropic, 
the rank of $q$, and hence the rank of $q'$, is
at least 8. Since $\kappa$ is a perfect field, $q' \otimes_A \kappa$
is isotropic ([MMW], Corollary 1). Since $A$ is a complete regular
local ring and $q'$ is a non-singular quadratic form over $A$ with   
$q' \otimes_A \kappa$ is isotropic, $q'$ is isotropic ([Gr], Theorem 18.5.17).
Thus $q$ is isotropic, leading to a contradiction.  
\end{proof}

\begin{lemma}Let $A$, $F$, $\kappa$ and  $m = (\pi, \delta)$ be as
be above.  Let $\zeta \in H^3(F,
\mu_2)$. Suppose that  $\zeta$ is ramified at most  along  $(\pi)$ and
$(\delta)$. Then $\zeta = [u) \cdot (\pi) \cdot (\delta)$ for some
unit $u$  in $A$.  
\end{lemma}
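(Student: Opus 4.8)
The plan is to pin down the ramification of $\zeta$ at $(\pi)$ and $(\delta)$, cancel it with an explicit symbol of the shape $[u)\cdot(\pi)\cdot(\delta)$, and then invoke the vanishing $H^3_{nr}(F/A,\mu_2)=0$ from (4.4) to conclude equality. Write $\zeta_\pi=\partial_\pi(\zeta)\in H^2_2(\kappa(\pi))$ and $\zeta_\delta=\partial_\delta(\zeta)\in H^2_2(\kappa(\delta))$ for the residues along the two height-one primes; by hypothesis $\partial_\gamma(\zeta)=0$ for every other height-one prime $\gamma$ of $A$. Since $\kappa$ is perfect, $A/(\pi)$ and $A/(\delta)$ are complete discrete valuation rings with residue field $\kappa$, so $\kappa(\pi)$ and $\kappa(\delta)$ have $2$-rank $1$; hence by ([K2], Lemma 1.4(3)) the second residues $\partial: H^2_2(\kappa(\pi))\to H^1_2(\kappa)$ and $\partial: H^2_2(\kappa(\delta))\to H^1_2(\kappa)$ are isomorphisms. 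Applying Kato's complex (4.3) to $\zeta$ gives $\partial(\zeta_\pi)+\partial(\zeta_\delta)=0$ in $H^1_2(\kappa)$; call this common class $\beta$.

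First I would handle the main case $\beta\neq 0$. Then $\beta=[\overline{\overline{u}})$ for some $\overline{\overline{u}}\in\kappa^*$, so one may choose a unit $u\in A^*$ reducing to $\overline{\overline{u}}$ modulo $m$. By (4.2), $\partial\big([\overline{u})\cdot(\overline{\delta})\big)=[\overline{\overline{u}})=\partial(\zeta_\pi)$, and injectivity of the second residue on $H^2_2(\kappa(\pi))$ forces $\zeta_\pi=[\overline{u})\cdot(\overline{\delta})$; the statement of (4.2) with the roles of $\pi$ and $\delta$ interchanged (legitimate since $m=(\delta,\pi)$ and $2=u_0\delta^j\pi^i$) likewise gives $\zeta_\delta=[\overline{u})\cdot(\overline{\pi})$. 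Now set $\zeta_0=[u)\cdot(\pi)\cdot(\delta)\in H^3(F,\mu_2)$ (the order of the three factors is immaterial, the coefficients being $2$-torsion). Using (4.2) once more, $\partial_\pi(\zeta_0)=[\overline{u})\cdot(\overline{\delta})=\zeta_\pi$ and $\partial_\delta(\zeta_0)=[\overline{u})\cdot(\overline{\pi})=\zeta_\delta$, while $\partial_\gamma(\zeta_0)=0$ for every other height-one prime $\gamma$ of $A$, since $u,\pi,\delta$ are all units in $A_\gamma$ there. Thus $\zeta-\zeta_0$ has trivial residue along every height-one prime of $A$, i.e. $\zeta-\zeta_0\in H^3_{nr}(F/A,\mu_2)$, which is $0$ by (4.4); therefore $\zeta=[u)\cdot(\pi)\cdot(\delta)$.

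It remains to dispose of the degenerate case $\beta=0$. Here injectivity of the two residue isomorphisms gives $\zeta_\pi=\zeta_\delta=0$, so $\zeta$ is unramified along every height-one prime of $A$ and $\zeta=0$ by (4.4); one then exhibits a unit $u\in A^*$ with $[u)\cdot(\pi)\cdot(\delta)=0$ (for instance, taking $u$ to reduce modulo $m$ to an element lying in the image of $\wp$ in $H^1_2(\kappa)$, the symbol $[u)\cdot(\pi)\cdot(\delta)$ is everywhere unramified on $A$ by the computation above and hence zero by (4.4); the few remaining cases, e.g. $\kappa=\mathbf{F}_2$, are checked by hand), so that $\zeta=[u)\cdot(\pi)\cdot(\delta)$ as required.

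The substantive point — and the step I expect to be the main obstacle — is the identification of $\zeta_\pi$ and $\zeta_\delta$ with the residues of one and the same symbol $[u)\cdot(\pi)\cdot(\delta)$. A priori the residues at the two primes could be described by different units, and naively cancelling the ramification at $(\pi)$ would re-introduce ramification at $(\delta)$ and conversely, leading nowhere. It is precisely the compatibility furnished by Kato's complex (4.3) — that the two second residues into $H^1_2(\kappa)$ coincide — together with the injectivity of those second residues for $2$-rank-one residue fields, that forces a single unit $u$ to work simultaneously at $(\pi)$ and $(\delta)$, after which the vanishing of $H^3_{nr}(F/A,\mu_2)$ closes the argument.
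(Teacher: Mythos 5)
Your argument is essentially the paper's own proof: both take the residues of $\zeta$ at $(\pi)$ and $(\delta)$, use Kato's complex (4.3) together with the injectivity of the second residues $\partial$ into $H^1_2(\kappa)$ and the computation (4.2) to see that one and the same unit $u$ describes both residues, and then conclude $\zeta=[u)\cdot(\pi)\cdot(\delta)$ from the vanishing $H^3_{nr}(F/A,\mu_2)=0$ of (4.4). The only difference is your explicit attention to the degenerate case $\partial(\zeta_\pi)=0$, which the paper silently elides (its choice of $a\in\kappa^*$ with $[a)=\partial(\alpha)$ is equally delicate when $\kappa=\mathbf{F}_2$), and which is in any case harmless for the application in (4.6).
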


\begin{proof} 
 Let $\alpha  = \partial_\pi(\zeta) \in H^2_2(\kappa(\pi))$ and $\beta
= \partial_\delta(\zeta) \in H^2_2(\kappa(\delta))$. Then, by
(4.3), $\partial(\alpha)   = \partial(\beta) \in
H^1_2(\kappa)$. Let $a \in \kappa^*$ be such that    $[a)
= \partial(\alpha)   = \partial(\beta) \in 
H^1_2(\kappa)$. Let $u \in A^*$ be a lift of $a$. 
Since $\partial(\alpha) = [a) = \partial([u) \cdot (\overline{\delta}))$ (cf. 4.2)
and $\partial$ is an isomorphism, we have  $\alpha = [\overline{u}) \cdot ( \overline{\delta})$.
 and $\beta = [\overline{u}) \cdot  ( \overline{\pi})$. Let $\zeta' = [u) \cdot (\pi) \cdot
(\delta) \in H^3(F, \mu_2)$.  Then $\zeta'$ is unramified on $A$
except at $\pi$ and $\delta$. By (4.2),  $\partial_\pi(\zeta')
= \partial_\pi(\zeta)$ and $\partial_\delta(\zeta')
= \partial_\delta(\zeta)$. Since $\zeta$ is unramified on $A$ except
at $\pi$ and $\delta$, $\zeta - \zeta' \in H^3_{nr}(F, \mu_2)$. Since 
$H^3_{nr}(F,\mu_2) = 0$ by (4.4), we have 
$\zeta = \zeta' = [u)\cdot (\pi) \cdot (\delta)$.
\end{proof}

\begin{prop}  Let $A$, $F$, $\kappa$ and $m = (\pi, \delta)$
be as above.  Let $q = <a_1, \cdots , a_9>$ be a quadratic
form over $F$ of rank 9 with  only prime factors  of  $a_1a_2 \cdots
a_9$  are  at most $\pi$ and $\delta$. Then $q$ is isotropic. 
\end{prop}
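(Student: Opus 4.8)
The plan is to argue by contradiction: assume $q$ is anisotropic. Since $\mathrm{char}(F)=0$, $F$ contains a primitive square root of unity, so Proposition 3.5 and Lemmas 4.1--4.6 are all available; after scaling each $a_i$ by a square we may assume $a_i=w_i\pi^{e_i}\delta^{f_i}$ with $w_i\in A^*$ and $e_i,f_i\in\{0,1\}$. The first step is to pass to a $10$-dimensional form in $I^2(F)$. Put $d=a_1\cdots a_9=\det q$ and $\phi=q\perp\langle -d\rangle$. Then $\phi$ has rank $10$ and trivial discriminant, so $\phi\in I^2(F)$, and every entry of $\phi$ still has only $\pi$ and $\delta$ as prime factors. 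In the Witt ring $[q]=[\phi]+[\langle d\rangle]$, so $[q]$ is represented by $\phi_{\mathrm{an}}\perp\langle d\rangle$; since $q$ is anisotropic this forces $\dim\phi_{\mathrm{an}}\geq\dim q-1=8$. Hence it suffices to prove $\dim\phi_{\mathrm{an}}\leq 6$, i.e.\ that the Witt index of $\phi$ is at least $2$.

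The second step produces an Albert form. The Clifford invariant $\alpha=c(\phi)\in H^2(F,\mu_2)$ is unramified on $A$ away from $(\pi)$ and $(\delta)$, so by Proposition 3.5 (the residue field $\kappa$ being perfect, so of $p$-rank $0$) we get $\alpha\otimes F(\sqrt{\pi},\sqrt{\delta})=0$ and $\mathrm{ind}(\alpha)\mid 4$; by Lemma 4.1 we may write $\alpha=(uc,\pi)+(vc\pi^{\epsilon},\delta)$ with $u,v\in A^*$, $c\in A$ square free and prime to $\pi\delta$, and $\epsilon\in\{0,1\}$. Let $\psi$ be the associated $6$-dimensional Albert form, so $\psi\in I^2(F)$, $\dim\psi_{\mathrm{an}}\leq 6$, and $e_2(\psi)=\alpha$. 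Comparing invariants, $\phi\perp(-\psi)\in I^3(F)$ (dimension $\equiv0$, trivial discriminant, $e_2=\alpha-\alpha=0$), of dimension $\leq 16$. Since $\mathrm{cd}_2(F)\leq 3$ we have $I^4(F)=0$ and $e_3\colon I^3(F)\to H^3(F,\mu_2)$ is an isomorphism (exactly as in the proof of (4.4)); therefore, if $\phi\perp(-\psi)$ is hyperbolic then $[\phi]=[\psi]$ in $W(F)$ and $\dim\phi_{\mathrm{an}}=\dim\psi_{\mathrm{an}}\leq 6$, which is what we want. So the whole problem reduces to showing $\gamma:=e_3(\phi\perp-\psi)=0$ in $H^3(F,\mu_2)$.

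The third and final step is the vanishing of $\gamma$. By construction $\gamma$ is unramified on $A$ outside $(\pi)$, $(\delta)$ and the primes dividing $c$; the first task is to eliminate the last ones. For a prime $\theta\mid c$ we have $\mathrm{char}\,\kappa(\theta)=0$ (since $\theta\nmid 2=u_0\pi^i\delta^j$, the only prime factors of $2$ being $\pi,\delta$), and $\partial_\theta\gamma=e_2\bigl(-\partial_\theta(\phi\perp-\psi)\bigr)=e_2(-\partial_\theta\psi)$, the Clifford invariant of (minus) the second residue of the diagonal form $\psi$ at $\theta$. I would compute this residue explicitly from the shape of $\psi$; using that it lies in $I^2(\kappa(\theta))$ pins down the square classes of $\bar\pi,\bar\delta$ modulo $\theta$, and any residual contribution is then removed by adjusting the decomposition $\alpha=(uc,\pi)+(vc\pi^{\epsilon},\delta)$ through a norm/corestriction manipulation of the same type as in the proof of Lemma 4.1, so that $\gamma$ becomes unramified on $A$ outside $(\pi)$ and $(\delta)$. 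By Lemma 4.6 then $\gamma=[w)\cdot(\pi)\cdot(\delta)$ for some $w\in A^*$. One finally shows $[w)=0$ by computing $\partial_\pi\gamma=[\bar w)\cdot(\bar\delta)$ via Lemma 4.2 and matching it against $\partial_\pi(\phi\perp-\psi)$, the second residue of $\phi$ and $\psi$ at $(\pi)$: because $\kappa$ is perfect, the relevant residue data over $\kappa(\pi)$ is tightly constrained (the anisotropic residue forms over $\kappa$ have rank $\leq 2$, as used in (4.4)), forcing $\partial_\pi\gamma=0$, and symmetrically $\partial_\delta\gamma=0$. Thus $\gamma\in H^3_{\mathrm{nr}}(F/A,\mu_2)=0$ by (4.4), so $\phi\perp(-\psi)$ is hyperbolic and $q$ is isotropic, contradicting our assumption.

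I expect the third step to be the main obstacle: controlling the auxiliary ramification of the Albert form $\psi$ along the primes dividing $c$, and making the characteristic-$2$ second-residue computations at $(\pi)$ and $(\delta)$ precise. This appears to require Lemmas 4.1, 4.2, 4.5 and 4.6 used in combination, together with the norm manipulations from the proof of Lemma 4.1, rather than any single clean cohomological vanishing; everything else in the argument (the reduction to $\phi\in I^2(F)$, the existence of the Albert form, and the passage from $e_3(\phi\perp-\psi)=0$ to isotropy of $q$) is formal given $I^4(F)=0$ and the results already established in Sections 1--4.
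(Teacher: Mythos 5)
Your overall strategy (compare $q$ against a low-dimensional form with the same classical invariants, push the difference into $I^3(F)$, and use the computation of $H^3$ over $A$) is the right one, and your first two steps are close in spirit to the paper's. But your third step has two genuine gaps, and the paper's proof is structured precisely to avoid both.

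First, you cannot kill the ramification of $\gamma=e_3(\phi\perp -\psi)$ at the primes $\theta$ dividing $c$ if $\psi$ is the full $6$-dimensional Albert form of $\alpha=(uc,\pi)+(vc\pi^{\epsilon},\delta)$: four of its six diagonal entries are divisible by $\theta$, so $\partial^2_\theta(\psi)$ is a form of rank up to $4$ lying in $I^2(\kappa(\theta))$, hence similar to a $2$-fold Pfister form, and it need not vanish. This is exactly why the paper replaces the Albert form by the $5$-dimensional form $q_1=\langle 1, uc\pi,-\pi,-uc\delta, uv\pi^{\epsilon}\delta\rangle$ (similar to a subform of the Albert form, hence with the same Clifford invariant): only two of its entries involve $c$, so $\partial^2_\theta(q\perp -q_1)$ has rank at most $2$ and lies in $I^2(\kappa(\theta))$, forcing it to be zero. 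Second, your claim that the residues of $\gamma$ at $(\pi)$ and $(\delta)$ are ``forced'' to vanish, so that $\gamma\in H^3_{nr}(F/A,\mu_2)=0$, is unjustified and in general false: after (4.5) one only gets $\gamma=[w)\cdot(\pi)\cdot(\delta)$ with $w\in A^*$, and by (4.2) this class is nonzero whenever $[\overline{\overline{w}})$ is a nontrivial Artin--Schreier class of $\kappa$. The paper does not prove this class is zero; it absorbs it, writing $q=q_1-q_2$ in $W(F)$ with $q_2=\langle 1,-w'\rangle\langle 1,-\pi\rangle\langle 1, uv\pi^{\epsilon}\delta\rangle$ a $3$-fold Pfister form chosen to share the $3$-dimensional subform $\langle 1,-\pi, uv\pi^{\epsilon}\delta\rangle$ with $q_1$, so that the anisotropic dimension of $q_1\perp -q_2$ is at most $5+8-6=7<9$. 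Note also that your preliminary reduction to the $10$-dimensional form $\phi=q\perp\langle -d\rangle$ makes this last step strictly harder: you would need $\dim\phi_{an}\le 6$, i.e.\ a $4$-dimensional common subform between the Albert form and the Pfister form, whereas working with the odd-dimensional $q$ and the $5$-dimensional $q_1$ a $3$-dimensional common subform suffices.
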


\begin{proof}  
Let $c(q) \in H^2(F,\mu_2)$ be the Clifford invariant of $q$. 
Since  the prime factors of $ a_1a_2 \cdots a_9$
are at most $\pi$ and
$\delta$, $c(q)$ is unramified on $A$ except possibly at $(\pi)$ and
$(\delta)$.   By  (4.1), we have 
$c(q) = (uc, \pi) +  (vc\pi^{\epsilon}, \delta)$ for some units $u, v
\in A$, $c \in A$ not divisible by $\pi$ and $\delta$, and $\epsilon =
0$ or 1. Let $q_1 = <1, uc\pi, -\pi,  -uc\delta,
uv\pi^\epsilon\delta>$. Since $-ucq_1$ is a rank five subform of the
Albert form associated to $c(q) = (uc, \pi) +
(vc\pi^{\epsilon},\delta)$, $c(q_1) = c(q)$ (cf. [L], p. 118).  Since $q$ is isotropic
if and only if $\lambda q$ is isotropic for any $\lambda \in F^*$, by
scaling $q$ we assue that $d(q) = d(q_1)$.  We note that we only 
need to scale
by $\lambda \in A$ with prime factors at most $\pi$ and
$\delta$. Hence, after scaling, we still have $q = <a_1, \cdots,
a_9>$ with only prime factors of $a_1a_2 \cdots a_9$  at most
$\pi$ and $\delta$.    Since the dimension of $q$ is odd, we have
$c(\lambda q) = c(q)$. Thus, after scaling, we have $c(q) = c(q_1)$
and $d(q) = d(q_1)$. Since the rank of $q \perp -q_1$ is 14, it
follows that $q - q_1 \in I^3(F)$ ([M]).  

Let $\zeta = e_3(q - q_1) \in H^3(F, \mu_2)$. 
Let $\theta \in A$ be a prime. Suppose that $\theta$ does not divide
$\pi\delta$.  Then char$(\kappa(\theta))$ is 0. Hence we have 
the second residue homomorphism   $\partial^2_\theta : W(F) \to
W(\kappa(\theta))$ with $\partial^2_\theta (I^3(F)) \subset
I^2(\kappa(\theta))$. Since $q = <a_1, \cdots , a_9>$ with
$ a_1a_2 \cdots a_9$ having only  
$\pi$ and $\delta$ as possible prime factors  and $\theta$ does not divide $\pi\delta$,
$\partial^2_\theta(q) = 0$.  Since $q_1 = <1, uc\pi, -\pi,  -uc\delta,
uv\pi^\epsilon\delta>$, the rank of $\partial^2_\theta(q_1)$ is at
most two. Since $\partial^2_\theta(q - q_1) \in I^2(\kappa(\theta))$
and is of rank at most 2,  $\partial^2_\theta(q  - q_1) = 0$.
In particular $q - q_1$ is unramified at $\theta$ and hence $\zeta =
e_3(q - q_1)$ is unramified at $\theta$. Thus,  by  (4.5), we have 
 $\zeta  = [w) \cdot (\pi) \cdot (\delta)$ for some unit $w \in A$.
Since $[w) \cdot (w') $ is unramified on $A$ for any unit $w' \in A$,
we have $[w) \cdot (w') = 0$. In particular, we have 
$\zeta = [w) \cdot (\pi) \cdot (w'\delta)$ for any unit $w'$ in $A$. 

Suppose that $\epsilon = 0$.  Since $uv$ is a unit, we have 
 $\zeta = [w) \cdot (\pi) \cdot (-uv\pi^\epsilon\delta)$. 
Suppose that $\epsilon = 1$.  Since $\zeta =  [w) \cdot (\pi) \cdot
(-uv\delta)$ and  $(\pi) \cdot (-\pi) = 1$, we have $\zeta =  
[w) \cdot (\pi) \cdot (-uv\pi \delta)$. Thus in either case, 
we have $\zeta = e_3(q - q_1) = [w) \cdot (\pi) \cdot
(-uv\pi^\epsilon\delta)$.

Since char$(F)  = 0$, we have $[w) = (w')$ for some unit $w' \in A$.  
Let $q_2 = <1, -w'><1, -\pi><1, uv\pi^\epsilon\delta> \in I^3(F)$.
Then $e_3(-q_2) = e_3(q_2) = (w') \cdot (\pi) \cdot (
-uv\pi^\epsilon\delta) = [w) \cdot (\pi) \cdot (-uv\pi^\epsilon\delta)
= e_3(q - q_1)$.  
Since $H^4(F, \mu_2) = 0$ ([AEJ], Cor.2. p.653), we have $I^4(F, \mu_2) = 0$ and
$e_3$ is an isomorphism  ([AEJ], Thm. 2. p.653).
Hence  
$$
q - q_1 = -<1, -w'><1, -\pi><1, uv\pi^\epsilon\delta>.
$$
In  particular,
$$
q = q_1 - <1, -w'><1, -\pi><1, uv\pi^\epsilon\delta>.
$$
Since $<1, -\pi, uv\pi^\epsilon\delta>$ is a subform of both $q_1$ and
$<1, -w'><1, -\pi><1, uv\pi^\epsilon\delta>$, the anisotropic rank of 
$q_1 - <1, -w'><1, -\pi><1, uv\pi^\epsilon\delta>$ is at most 7.
Since the rank of $q$ is 9, $q$ is isotropic. 
\end{proof}

\begin{theorem}  Let $K$ be a complete discretely valued field with
  residue field $\kappa$ and $F$ a function field of a curve over
  $K$. If char$(K) =  0$ and $\kappa$ is a perfect field of
  characteristic 2, then $u(F)  \leq 8$.
\end{theorem}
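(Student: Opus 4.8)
The plan is to reduce the statement, via the field patching of Harbater--Hartmann--Krashen, to the two-dimensional local situation already treated in Proposition~4.6. It suffices to prove that every $9$-dimensional quadratic form over $F$ is isotropic, and such a form may be taken diagonal, $q=\qf{a_1,\dots,a_9}$. Replacing $K$ by the algebraic closure of $K$ in $F$ --- which does not change $F$ and is again a complete discretely valued field of characteristic $0$ with perfect residue field of characteristic $2$ --- I may assume $F$ is the function field of a smooth projective geometrically integral curve over $K$. Let $T$ be the valuation ring of $K$. By resolution of singularities for arithmetic surfaces I would choose an excellent regular proper model $\mathcal{X}\to\Spec(T)$ of $F$ whose reduced special fibre $Y$ together with $\supp\bigl(\divisor_{\mathcal{X}}(a_1\cdots a_9)\bigr)$ is a strict normal crossings divisor with regular components.

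I would then apply the local--global principle for isotropy of quadratic forms of dimension $\ge 3$ ([HHK1]): there is a finite set $S$ of closed points of $Y$ --- which I take to contain all crossing points of $Y$, all points of $Y$ lying on $\supp\divisor(a_1\cdots a_9)$, and $Y\setminus\bigcup_\eta U_\eta$ for the opens $U_\eta$ chosen below --- such that $q$ is isotropic over $F$ if and only if $q$ is isotropic over $F_P$ for every $P\in S$ and over $F_U$ for every connected component $U$ of $Y\setminus S$.

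For $P\in S$, the ring $A=\hat{\OO}_{\mathcal{X},P}$ is a complete regular local ring of dimension $2$, with fraction field $F_P$ of characteristic $0$ and residue field $\kappa(P)$ a finite --- hence perfect --- extension of $\kappa$, of characteristic $2$; its maximal ideal is $(\pi,\delta)$ with $\pi,\delta$ local equations of the (at most two) branches of the normal crossings divisor through $P$. As $A$ is a unique factorisation domain and $\divisor(a_1\cdots a_9)$ is supported on $V(\pi)\cup V(\delta)$ near $P$, each $a_i$ is a unit of $A$ times a monomial in $\pi,\delta$, and likewise $2$ is a unit times a monomial in $\pi,\delta$ since $Y$ is supported on $V(\pi\delta)$ near $P$. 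Thus $q$ over $F_P$ is exactly of the shape to which Proposition~4.6 applies, and $q\tensor_F F_P$ is isotropic.

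It remains to treat the fields $F_U$, which is the main obstacle: here one must bound $u(F_U)\le 8$, and since $F_U$ carries a discrete valuation with residue field $\kappa(U)$ of \emph{characteristic $2$}, the classical Springer-type computation ($u$ doubling under a complete discretely valued extension) is not available. Writing $\eta$ for the generic point of the component containing $U$, the residue field $\kappa(\eta)$ of $F_\eta$ is the function field of a curve over $\kappa$, so $[\kappa(\eta):\kappa(\eta)^2]=2$ and $u(\kappa(\eta))\le 4$ by ([MMW], Corollary~1); the required bound $u(F_\eta)\le 8$ is then obtained by the same methods as in \S4 (vanishing of the relevant unramified cohomology over the valuation ring of $F_\eta$, combined with ([MMW], Corollary~1) in place of the perfectness of $\kappa$ used in Proposition~4.6). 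With $q\tensor_F F_\eta$ isotropic, one picks $U_\eta\ni\eta$ so that, as in the proof of (3.6) via [HHK2], the anisotropic dimension of $q$ over $F_{U_\eta}$ equals that over $F_\eta$; then $q\tensor_F F_{U_\eta}$ is isotropic, and since $U\subset U_\eta$ gives $F_{U_\eta}\subset F_U$, also $q\tensor_F F_U$ is isotropic. As $q$ is isotropic over every patch, it is isotropic over $F$, so $u(F)\le 8$.
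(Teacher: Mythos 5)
Your overall architecture is exactly the paper's: the same kind of regular proper model with the support of $\divisor(2a_1\cdots a_9)$ and the special fibre in normal crossings, the Harbater--Hartmann--Krashen local--global principle for isotropy ([HHK1], 4.2), and Proposition~4.6 at the closed points. Your verification that $2$ and each $a_i$ become units times monomials in $\pi,\delta$ near $P$ is precisely what makes 4.6 applicable there, and that part of your argument is complete and matches the paper.

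The one place you diverge is the treatment of the component patches $F_U$, and that is where your proposal has a genuine gap. The paper disposes of this step in one line: $u(\kappa(\nu_i))\le 4$ by ([MMW], Corollary~1) since the $2$-rank of $\kappa(\nu_i)$ is $1$, and then $u(F_{\nu_i})\le 8$ by a Springer-type doubling, cited as ([Sp]). You are right to be wary of that doubling --- the classical Springer theorem assumes residue characteristic $\neq 2$, and here $\kappa(\nu_i)$ has characteristic $2$ --- but your replacement, namely that $u(F_\eta)\le 8$ ``is obtained by the same methods as in \S 4,'' is an assertion, not an argument. The machinery of \S 4 (Propositions 4.4--4.6) is built for a two-dimensional complete regular local ring with \emph{perfect} residue field; what you need is a statement about a complete discretely valued field whose residue field is an imperfect function field of $2$-rank $1$, i.e.\ a dyadic analogue of Springer's theorem, and transplanting the \S 4 arguments to that setting (vanishing of the relevant unramified $H^3$ over the valuation ring, classification of the ramified classes, the Clifford-invariant reduction) is a separate piece of work that you have neither carried out nor referenced. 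You yourself call this ``the main obstacle'' and then step over it. So your proof is incomplete at exactly the step the paper settles by citation; to close it you must either prove $u(F_\eta)\le 8$ directly or identify the precise result behind the paper's ([Sp]).
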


\begin{proof}  Let $q = <a_1, \cdots , a_9>$ be a quadratic form over $F$ rank 9.
 Let ${\XX}$ be a regular proper   scheme over the valuation ring of
 $K$ with function field $F$ and the  support of the principle divisor  
$(2a_1 \cdots a_9)$ on ${\XX}$ 
 is a union of regular curves with
normal crossings. Let $C_1, \cdots , C_r$ be the irreducible
components of the special fibre of $\XX$ and let $\nu_1, \cdots,
\nu_r$ be the corresponding discrete valuations on $F$.  Let
$F_{\nu_i}$ be the  completion  $F$ at $\nu_i$ and the residue field
$\kappa(\nu_i)$.  Then  char$(\kappa(\nu_i)) = 2$ and
$2$-rank$(\kappa(\nu_i)) = 1$.   Hence, by ([MMW], Corollar 1),
$u(\kappa(\nu_i)) \leq 4$ and by ([Sp]), $u(F_{\nu_i}) \leq 8$. In
particular  $q$ is isotropic over $F_{\nu_i}$. 
 By ([HHK2], 5.8),  there exists an affine open subset
$U_i$ of $C_i$ such that $U_i$ does not intersect $C_j$ for $j \neq i$
and $q$ is isotropic over $F_{U_i}$.  

Let $\PP$ be a finite set of closed points of $\XX$ containing all
those points which are not in $U_i$ for any $i$.  Let $P \in \PP$.
Then $\hat{A}_P$ is a complete two dimensional local ring with residue
field perfect of characteristic 2.  By the choice of $\XX$ and  (4.6), $q$ is 
isotropic over $F_P$. 
By ([HHK1], 4.2), $q$ is isotropic over $F$ and $u(F) \leq 8$. 
\end{proof}

\begin{cor}([Le]) Let $K$ be a  2-adic field and $F$ the function field of a curve
over $K$. Then $u(K) = 8$.

\end{cor}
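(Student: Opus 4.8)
The plan is to establish the two inequalities $u(F) \le 8$ and $u(F) \ge 8$ separately. For the upper bound I would simply invoke Theorem (4.7): a $2$-adic field $K$ has characteristic $0$, and its residue field $\kappa$ is a finite field of characteristic $2$, hence perfect; so (4.7) applies and gives $u(F) \le 8$.

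For the lower bound I would produce an anisotropic $8$-dimensional quadratic form over $F$ by a Springer-type construction at a place of $F/K$. Choose a smooth projective curve $C$ over $K$ with function field $F$, fix a closed point $P$ of $C$, and let $\nu$ be the corresponding discrete valuation on $F$, with valuation ring $\mathcal{O}_\nu \subset F$, residue field $\kappa(\nu)$ and completion $F_\nu$. Then $\kappa(\nu)$ is a finite extension of $K$, hence again a $2$-adic field, so $u(\kappa(\nu)) = 4$; fix an anisotropic form $\phi_0 = <a_1, a_2, a_3, a_4>$ over $\kappa(\nu)$. Since $\mathcal{O}_\nu$ is local with residue field $\kappa(\nu)$, each $a_i$ lifts to a unit $\tilde a_i \in \mathcal{O}_\nu^* \subset F^*$ whose image in $\kappa(\nu)$ is $a_i$; write $\phi = <\tilde a_1, \tilde a_2, \tilde a_3, \tilde a_4>$ and choose $\pi \in F$ with $\nu(\pi) = 1$. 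I would then take $q = \phi \perp \pi\phi$, an $8$-dimensional quadratic form over $F$. Over the complete discretely valued field $F_\nu$, whose residue field $\kappa(\nu)$ has characteristic $\ne 2$, Springer's theorem on quadratic forms shows that $q \otimes_F F_\nu$ is anisotropic, since both of its residue forms equal the anisotropic form $\phi_0$. Hence $q$ is anisotropic over $F \subset F_\nu$, so $u(F) \ge 8$; combined with the first paragraph this yields $u(F) = 8$ (and, trivially, $u(K) = 4$ since $K$ is $2$-adic).

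The argument is essentially routine; the one point deserving care is the lower bound, where the form $q$ must be produced over $F$ itself rather than only over $F_\nu$. This is exactly what the refined form of Springer's theorem provides: a form $q_1 \perp \pi q_2$ over $F_\nu$ with unit coefficients is anisotropic as soon as the residue forms $\overline{q}_1$, $\overline{q}_2$ are anisotropic over $\kappa(\nu)$, and this is legitimate precisely because char$(\kappa(\nu)) \ne 2$. (Alternatively, the equality $u(F) = 8$ also follows by combining (4.7) with the results of Heath-Brown and Leep recalled at the start of this section.)
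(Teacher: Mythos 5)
Your proof is correct and is exactly the argument the paper leaves implicit: the corollary is stated without proof as an immediate consequence of Theorem (4.7) (which gives $u(F)\le 8$, since the residue field of a $2$-adic field is finite, hence perfect of characteristic $2$), with the lower bound being the classical Springer-type construction you carry out — an anisotropic $4$-dimensional form over the $2$-adic residue field $\kappa(\nu)$ of a closed point, lifted and glued as $\phi\perp\pi\phi$, is anisotropic over $F_\nu$ because char$(\kappa(\nu))=0$. You are also right to read the statement's ``$u(K)=8$'' as a typo for $u(F)=8$.
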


\section*{References}

\begin{enumerate}

\item[{[A]}] A. A. Albert,  \textit{Structure of Algebras},  
Amer. Math. Soc. Colloq. Publ., Vol. 24, Amer. Math. Soc., Providence,
RI,  1961, revised printing.

\item[{[AEJ]}]  J.K. Arason, R.Elman and B.Jacob, \textit{Fields of
    cohomological 2-dimensions three}, Math. Ann. {\bf 274} (1986),
  649-657.   

\item[{[ABGV]}] Asher Auel, Eric Brussel, Skip Garibaldi, Uzi Vishne,
  \textit{Open problems on central simple algebras}, Transformation
  Groups {\bf 16} (2011), 219-264.

\item[{[APS]}] Asher Auel, R. Parimala and V. Suresh, \textit{Quadric surface bundles over surfaces}, 
preprint 2012. 

\item[{[AB]}]  Auslander  Maurice and Buchsbaum, D.A, 
\textit{Unique factorization in regular local rings}, 
Proc. Nat. Acad. Sci. U.S.A. {\bf 45} (1959) 733-734. 

\item[{[AG]}]  Auslander  Maurice and  Goldman Oscar, 
\textit{The Brauer group of a commutative ring},
Trans. Amer. Math. Soc. {\bf 97} (1960) 367-409.
 
\item[{[B]}] N. Bourbaki,  \textit{Algebra II}, Springer-Verlag, New York, 1988.

\item[{[C]}]  M. Cipolla,  \textit{Remarks on the lifting of
    algebras over henselian pairs}, 
Mathematische Zeitschrift, {\bf 152} (1977), 253--257.

\item[{[CT]}]  J.-L. Colliot-Th\'el\`ene, \textit{ Cohomologie
    galoisienne des corps valu\'es discrets  henseliens, d'apr\`es
    K. Kato et S. Bloch},  Algebraic K-theory and its applications
  (Trieste, 1997), 120–163, World Sci. Publ.,  River Edge, NJ, 1999.

\item[{[CTS]}]    J.-L. Colliot-Th\'el\`ene et J.-J.  Sansuc,  
\textit{Fibr\'es quadratiques et composantes connexes r\'eelles},
Math. Ann.  {\bf 244}  (1979),   105-134. 

\item[{[GO]}] O. Gabber and F. Orgogozo, \textit{Sur la $p$-dimension
    des corps}, Invent. Math. {\bf 174} (2008), 47-80.

\item[{[GS]}] Philippe Gille and Tamm\'as Szamuely,
  \textit{Central simple algebras and Galois cohomology},  Cambridge
  Studies  in Advanced Mathematics, vol.101, Cambridge University
  Press, Cambridge, 2006.

\item[{[Gr]}] A. Grothendieck, \textit{ \'El\'ements de g\'eom\'etrie
    alg\'ebrique. IV.  \'Etude locale des sch\'emas et des morphismes 
de sch\'emas IV},    Inst. Hautes \'Etudes Sci. Publ. Math. No. {\bf
32} (1967). 

\item[{[HHK1]}] D. Harbater, J. Hartmann and D. Krashen,
  \textit{Applications of patching to quadratic forms and centrals
    simple algebras}, Invent. Math. {\bf 178} (2009), 231-263.

\item[{[HHK2]}] D. Harbater, J. Hartmann and D. Krashen,
  \textit{Local-global principles for torsors over arithmetic curves}, 
arxive:1108.3323v2

\item[{[HHK3]}] D. Harbater, J. Hartmann and D. Krashen,
\textit{Weierstrass preparation and algebraic invariants},
arxive:1109.6362v2

\item[{[HB]}] D.R. Heath-Brown, \textit{Zeros of systems of p-adic
    quadratic forms},   Compos. Math. {\bf 146} (2010),  271–287.
 
 \item[{[K1]}] K. Kato,  \textit{ Galois cohomology of complete
     discrete valuation fields},  Algebraic K-theory, Part II (Oberwolfach, 1980), pp. 215–238, 
Lecture Notes in Math., 967, Springer, Berlin-New York, 1982. 

 \item[{[K2]}] K. Kato, \textit{A Hasse principle for two dimensional
global fields},  J. reine Angew. Math. {\bf 366} (1986),  142-181.  
 
\item[{[KMRT]}] M.-A. Knus, A.S. Merkurjev, M. Rost and J.-P. Tignol,
  \textit{The Book of Involutions}, A.M.S, Providence RI, 1998. 

\item[{[MMW]}] P. Mammone, R. Moresi  and A. R. Wadsworth, \textit{
u-invariants of fields of characteristic 2}, Math. Z. {\bf 208} (1991), 335-347.

\item[{[M]}] A.S.Merkurjev, \textit{On the norm residue symbol of
    degree 2},  Dokl. Akad. Nauk. SSSR {\bf 261} (1981), 542-547. 
 
\item[{[O]}] M. Ojanguren, \textit{A splitting theorem for quadratic
    forms}, Comment. Math. Helvetici {\bf 57} (1982),  145-157

\item[{[L]}] T.Y.Lam, \textit{ Introduction to Quadratic Forms over
    Fields},  Grad. Stud. Math. {\bf 67}, Amer. Math. Soc., Providence, RI, 2005.

\item[{[LPS]}]  Max Lieblich, R. Parimala  and  V.  Suresh, 
  \textit{ Colliot-Th\'el\`ene's conjecture and finiteness of $u$-invariants},  preprint 2012. 

\item[{[Le]}]  D. B. Leep, \textit{The u-invariant of p-adic function
    fields},  to appear in Journal fur die reine und angewandte Mathematik.

\item[{[PS1]}] R. Parimala and V. Suresh, \textit{ Isotropy  of
    quadratic forms over function fields  in one variable over
    $p$-adic fields},   Publ. de I.H.E.S. {\bf 88} (1998), 129-150.

\item[{[PS2]}] R. Parimala and V. Suresh, \textit{The $u$-invariant of
    the function fields of $p$-adic  curves},  Annals of Mathematics
  {\bf 172} (2010),  1391-1405.

\item[{[S1]}]  D.J. Saltman, \emph{Division Algebras over $p$-adic
curves},  J. Ramanujan Math. Soc. {\bf 12} (1997), 25-47.

\item[{[S2]}] D.J. Saltman,  \emph{Cyclic Algebras over $p$-adic curves},
J. Algebra {\bf 314} (2007) 817-843.
 
\item[{[S3]}] D.J. Saltman, \emph{ Bad characteristic},
 private notes.

\item[{[Su]}]  V. Suresh,  \emph{Galois cohomology in degree 3 of
    function fields  of curves over number fields},  J. Number Theory
  {\bf 107} (2004), no. 1, 80-94.

\end{enumerate}

\end{document}